\newcommand{\mathsym}[1]{{}}
\newcommand{\thmref}[1]{Theorem~\ref{#1}}
\newcommand{\lemref}[1]{Lemma~\ref{#1}}
\newcommand{\corref}[1]{Corollary~\ref{#1}}
\newcommand{\figref}[1]{Figure~\ref{#1}}
\def\li{L_{i}}
\def\ri{R_{i}}
\def\ddx{{\frac{d}{dx}}}
\def\oga{{\overline{\ga}}}
\newtheorem{theorem}{Theorem}[section]
\newtheorem{corollary}[theorem]{Corollary}
\newtheorem{lemma}[theorem]{Lemma}
\newtheorem{remark}[theorem]{Remark}
\theoremstyle{definition}
\newcommand{\ga}{\Gamma}
\newcommand{\ee}[1]{E(#1)}
\newcommand{\vv}[1]{V(#1)}
\newcommand{\va}{\upsilon}
\newcommand{\pp}{p_{i}}
\newcommand{\qq}{q_{i}}
\def\<{\langle }
\def\>{\rangle }
\newcommand{\secref}[1]{\S\ref{#1}}
\newcommand{\jm}{\mathrm{J}}
\newcommand{\lm}{\mathrm{L}}
\newcommand{\plm}{\mathrm{L^+}}
\begin{document}

\title[Explicit Rayleigh's Principles and Spanning Trees]
{Explicit Rayleigh's Principles for Resistive Electrical Network and The Total Number of Spanning Trees of Graphs}

\author{Zubeyir Cinkir}
\address{Zubeyir Cinkir\\
Department of Industrial Engineering\\
Abdullah Gul University\\
38100, Kayseri, TURKEY\\}
\email{zubeyir.cinkir@agu.edu.tr}


\keywords{The resistance function, The voltage function, Euler's Identity, Rayleigh's Principles, Metrized Graph, The Number of Spanning Trees}
\thanks{We thank Osman Topan for bringing the reference \cite{Ho} into our attention.}

\begin{abstract}
We give identities for the voltage and resistance functions on a metrized graph to show how these functions behave under any edge deletion/contraction and the identification of any two vertices. This leads to explicit versions of Rayleigh's Principles on a resistive electrical network.
We also establish Euler's Identities for the resistance and the voltage functions on an electrical network. 
One can use these results to study various invariants of metrized graphs and electrical networks. As a specific application, we obtain various identities for the total number of spanning trees of a graph. For example, we show how the total number of spanning trees changes under graph operations such as, contraction of an edge, deletion of an edge, deletion of a vertex, the join of arbitrary two or three vertices.
\end{abstract}

\maketitle

\section{Introduction}\label{sec introduction}

Rayleigh's principles, also known as Short-Cut Principles, in electrical networks are given as follows: 

\textbf{Rayleigh’s Monotonicity Law}\cite[pg. 65]{DS} If the resistances of a circuit
are increased, then the effective resistance between any two points can
only increase. If they are decreased, it can only decrease.

\textbf{Rayleigh’s  Shorting Law}\cite[pg. 99]{DS}  Shorting certain sets of nodes together can only
decrease the effective resistance of the network between two given nodes.

\textbf{Rayleigh’s  Cutting Law}\cite[pg. 99]{DS}  Cutting certain branches can only increase the
effective resistance between two given nodes.

We obtain explicit versions of these laws, which can be found in 
\thmref{thm res cont1},
\thmref{thm del1}, 
\thmref{thm res cont edge} and
\thmref{thm res edge mod}.
We stated these results for a metrized graph but they are also valid for any weighted graph. For example,
\thmref{thm res cont1 on G} below is the corresponding version of \thmref{thm res cont1}.

Let $G$ be a connected weighted graph possibly having self-loops and multiple edges,
and let $G$ be given by the set of vertices $\vv{G}$, containing $n$ vertices, and the set of edges
$\ee{G}$, containing $m$ edges. One can consider $G$ as an electrical network by considering the vertices
as the nodes of the network, and replacing the edges by resistances of magnitude with the corresponding edge length (which is the reciprocal of the weight).

For any two vertices $p, \, q \, \in \vv{G}$,  let $G_{pq}$ be the graph obtained from $G$ by identifying these vertices. For any edge $e \in \ee{G}$, if the deletion of the interior of $e$ does not make $G$ disconnected (i.e., $e$ is not a bridge), we consider the graph $G-e$ obtained from $G$ by deleting the interior of $e$. 

We denote the effective resistance between the nodes $p$ and $q$ of $G$ by $r_G(p,q)$, or simply by $r(p,q)$ if it is clear from the context. Similarly, for any nodes $p$, $q$ and $s$ of $G$ we denote the voltage function $j_p^G(q,s)$, or simply $j_p(q,s)$. When the unit current enters at $s$ and exit from $p$ (with reference voltage $0$ at $p$),  the voltage difference between $q$ and $p$ is given by $j_p(q,s)$.


One of the main results of this paper can be stated for graphs as follows:
\begin{theorem}[Explicit Rayleigh’s  Shorting Law for graphs]\label{thm res cont1 on G}
For any $p, \, q, \, s, \, t \in \vv{G}$, we have
\begin{equation*}\label{}
\begin{split}
r(s,t) &=r_{G_{pq}}(s,t)+\frac{1}{r(p,q)} \big [ j_{p}(q,s) - j_{p}(q,t) \big ]^2.
\end{split}
\end{equation*}
In particular, $\displaystyle{r(s,t) \geq r_{G_{pq}}(s,t) }$ and the equality holds if and only if  $j_{p}(q,s)=  j_{p}(q,t)$.
\end{theorem}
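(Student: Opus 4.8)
The plan is to realize the identification of $p$ and $q$ as \emph{shorting}, i.e.\ as inserting a wire of zero resistance between these two nodes, and then to analyze the resulting network by superposition. Fix $p \neq q$ in $\vv{G}$ (if $p=q$ then $G_{pq}=G$ and $r(p,q)=0$, so there is nothing to prove). In $G$, let $V$ be the potential for the circuit in which a unit current enters at $s$ and exits at $t$, so that $r(s,t) = V(s)-V(t)$, and let $W$ be the potential for the auxiliary circuit in which a unit current enters at $q$ and exits at $p$, so that $W(a)-W(p) = j_p(a,q)$ and in particular $W(q)-W(p) = j_p(q,q) = r(p,q)$.

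When we short $p$ and $q$, superposition on the linear network $G$ says that the potential on $G_{pq}$ under the same unit $s \to t$ drive equals $V + I\,W$, where $I$ is the current the inserted wire must carry; $I$ is fixed by the requirement that $p$ and $q$ become equipotential, i.e.\ $(V+IW)(p) = (V+IW)(q)$. Solving gives $I = \big(V(p)-V(q)\big)/\big(W(q)-W(p)\big) = \big(V(p)-V(q)\big)/r(p,q)$. The essential input is the reciprocity (transfer-resistance) identity $V(p)-V(q) = j_p(q,t)-j_p(q,s)$, which expresses the $p$--$q$ potential drop in the $s \to t$ circuit through the voltage function; I would verify it either from symmetry of the voltage function in its arguments or, most transparently, from the closed form $j_p(q,s) = \tfrac12\big[r(p,q)+r(p,s)-r(q,s)\big]$, which reduces it to a short symmetric computation in the resistances $r(\cdot,\cdot)$.

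With $I$ in hand I would compute $r_{G_{pq}}(s,t) = (V+IW)(s) - (V+IW)(t) = r(s,t) + I\big(W(s)-W(t)\big)$. Using reciprocity once more to evaluate $W(s)-W(t) = j_p(q,s)-j_p(q,t)$ and substituting $I = \big(j_p(q,t)-j_p(q,s)\big)/r(p,q)$, the cross term collapses to $-\tfrac{1}{r(p,q)}\big[j_p(q,s)-j_p(q,t)\big]^2$, and rearranging yields the asserted identity. An equivalent route, useful as a sanity check, is purely algebraic: writing $r(s,t) = (e_s-e_t)^{\top}L^{+}(e_s-e_t)$ for the graph Laplacian $L$, shorting $p$ and $q$ is the $c\to\infty$ limit of the rank-one update $L \mapsto L + c\,(e_p-e_q)(e_p-e_q)^{\top}$, and Sherman--Morrison on the subspace orthogonal to the all-ones vector produces exactly the correction $-\big((e_s-e_t)^{\top}L^{+}(e_p-e_q)\big)^2/r(p,q)$, whose numerator is again $\big(j_p(q,s)-j_p(q,t)\big)^2$.

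The final ``in particular'' clause is then immediate: since $p \neq q$ in a connected graph forces $r(p,q) > 0$, the correction term is nonnegative and vanishes precisely when $j_p(q,s) = j_p(q,t)$. I expect the main obstacle to be the reciprocity identity together with the bookkeeping of signs and reference voltages: one must track which node is grounded in each auxiliary circuit so that the two separate appearances of reciprocity combine with the correct sign to assemble a perfect square, rather than an indefinite cross term. Reducing everything to the closed form for $j_p(q,s)$ at the outset sidesteps this difficulty entirely.
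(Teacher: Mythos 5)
Your proof is correct, but it proceeds by a genuinely different route from the paper's. The paper proves the metrized-graph version (\thmref{thm res cont1}) by first circuit-reducing $\ga$ to an equivalent network on the four vertices $p,q,s,t$ with six unknown edge resistances, writing down the explicit $4\times 4$ Laplacian, computing its Moore--Penrose inverse symbolically (via \eqref{eqn disc5} and Mathematica), extracting closed forms for $r(s,t)$, $r(p,q)$, $j_p(q,s)$, $j_p(q,t)$, computing $r_{\ga_{pq}}(s,t)$ by parallel reduction (equivalently as the limit $d\to 0$ of one reduced resistance), and then verifying the identity as a rational-function computation. You instead keep the whole network and argue structurally: you realize the shorted potential as $V+IW$ by superposition, pin down the wire current $I$ from the equipotential condition, and use reciprocity --- which, as you note, reduces via $j_p(q,s)=\tfrac12\big[r(p,q)+r(p,s)-r(q,s)\big]$ (this is exactly \eqref{eqn sym2}) to a symmetric computation in resistances --- so that the cross term assembles into the perfect square $-\tfrac{1}{r(p,q)}\big[j_p(q,s)-j_p(q,t)\big]^2$; your Sherman--Morrison variant on $L+c(e_p-e_q)(e_p-e_q)^{\top}$ with $c\to\infty$ is the same mechanism in matrix form. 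What each approach buys: the paper's verification is self-contained modulo trust in circuit reduction and the CAS step, and its reduced four-vertex picture is reused later (e.g.\ in the second proof of \thmref{thm res cont2} and in Figures); yours needs no computer algebra, works verbatim on any weighted network without reduction, and explains \emph{why} the correction is a square. Notably, your rank-one update at \emph{finite} $c$ directly yields the cutting law as well --- it is essentially the computation the paper performs in \eqref{eqn beta cont1} to deduce \thmref{thm del1} from \thmref{thm res cont1}. Two small points to tie off in a final write-up: justify that $V+IW$ restricted to the merged graph really is the $G_{pq}$ potential (it satisfies Kirchhoff's current law at every node, with net injection zero at the merged node, so uniqueness of harmonic functions applies), and note that $r(p,q)>0$ for $p\neq q$ in a connected graph so that $I$ is well defined; your exclusion of the degenerate case $p=q$ (where the stated formula is formally $0/0$) is the right reading, and the paper does not treat it either.
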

This is a neat result that we wanted to have for more than two decades. One can consult to \cite{AEGM} for a different approach to this problem.

As a consequence of Explicit Rayleigh's Principles, we are able to give refined versions of Euler's identities for the resistance and the voltage functions. These can be found in \thmref{thm res Euler} and \thmref{thm res Euler second}.

Since the resistance values are closely related to $t(G)$ the total number of spanning trees on a graph $G$ (see \thmref{thm res and spantree}), our findings on the resistance function enables us to derive various interesting identities for $t(G)$. These can be found in \secref{sec spanning1} and \secref{sec spanning2}.
In particular, we show how $t(G)$ changes under the join of arbitrary two or three vertices (see \thmref{thm span union2}, \thmref{thm span union3} and \thmref{thm span vertex2and2}).
\corref{cor span vertex contr} explains how contracting an edge effects $t(G)$. Likewise, \thmref{thm span edge del} show how $t(G)$ changes under edge deletion.
We show how $t(G)$ changes if a vertex is deleted (see  \thmref{thm span vertex del0}, \thmref{thm span vertex del1}, \thmref{thm span vertex del2} and \thmref{thm span vertex del general}).


\section{Explicit Rayleigh's Shorting Law}\label{sec metrized graphs}

We first recall minimal information about metrized graphs. If needed, one can consult to \cite{BF}, \cite{BRh}, \cite{C0}, \cite{C2} and \cite{C3}.
It is common to consider the resistance values between vertices of a combinatorial graph. However, the resistance  and the voltage functions are continuous functions on a given resistive electrical network.
We think that metrized graphs are better framework to study these functions. Metrized graphs combines both of the discrete and continuous aspects of these functions. 

There is a $1-1$ correspondence between the equivalence classes of finite connected weighted
graphs, the metrized graphs, and the resistive electric circuits. Therefore, the results stated in this section are valid also for the corresponding weighted graphs.


A \textit{metrized graph} $\ga$ is a finite connected graph
equipped with a distinguished parametri-zation of each of its edges.
In particular, $\ga$ is a one-dimensional manifold except at
finitely many ``branch points''. One can find other definitions of metrized graphs in \cite{RumelyBook}, and in the articles \cite{CR}, \cite{BRh},
and \cite{BF}.

A metrized graph can have multiple edges and self-loops. For any given $p \in \ga$,
the number of directions
emanating from $p$ will be called the \textit{valence} of $p$, and will be denoted by
$\va(p)$. By definition, there can be only finitely many $p \in \ga$ with $\va(p)\not=2$.

For a metrized graph $\ga$, we denote its set of vertices by $\vv{\ga}$.
We require that $\vv{\ga}$ be finite and non-empty and that $p \in \vv{\ga}$ for each $p \in \ga$ if
$\va(p)\not=2$. For a given metrized graph $\ga$, it is possible to enlarge the
vertex set $\vv{\ga}$ by considering more additional points of valence $2$ as vertices.

For a given metrized graph $\ga$, the set of edges of $\ga$ is the set of closed line segments with end points
in  $\vv{\ga}$. We denote the set of edges of $\ga$ by $\ee{\ga}$.
We denote $\# (\vv{\ga})$ and $\# (\ee{\ga})$ by $n$ and $m$, respectively if there is no danger of confusion.
We denote the length of an edge $e_i \in \ee{\ga}$ by $\li$.

A metrized graph $\ga$ is called $r$-regular if it has a vertex set $\vv{\ga}$ such that
$\va(p)=r$ for all vertices $p \in \vv{\ga}$.

For fixed $z$ and $y$ in $\ga$, we consider $\Gamma$ as a resistive
electric circuit with terminals at $z$ and $y$ with the resistance
in each edge given by its length. When unit current enters at $y$
and exits at $z$ (with reference voltage 0 at $z$), $j_{z}(x,y)$ is
the voltage difference between $x$ and $z$, and $r(x,y)=j_{x}(y,y)$
is the resistance between $x$ and $y$.

Both of the resistance function $r(x,y)$ and the voltage function $j_z(x,y)$ are symmetric in $x$ and $y$. Moreover, for each $x, \, y \in \ga$ we have:
\begin{equation}\label{eqn sym}
\begin{split}
&r(x,y) =r(y,x), \quad  j_{z}(x,y)=j_z(y,x)\\
&r(x,x)=0, \quad  j_{x}(y,x)=0\\
&j_{x}(y,y)=r(x,y), \quad r(x,y)=j_x(y,z)+j_y(x,z)\\
& 0 \leq j_{z}(x,y), \quad j_{z}(x,y) \leq r(z,x) \quad \text{and} \quad j_{z}(x,y) \leq r(z,y).
\end{split}
\end{equation}

We recall that the resistance values can be expressed in terms of the entries of the pseudo inverse of the discrete Laplacian matrix.
\begin{lemma} \cite{RB2}, \cite[Theorem A]{D-M} \label{lem disc}
Suppose $G$ is a graph with the discrete Laplacian $\lm$ and the
resistance function $r(x,y)$. For the pseudo inverse $\plm$ of $\lm$, we have
$$r(p,q)=l_{pp}^+-2l_{pq}^+ + l_{qq}^+, \quad \text{for any two vertices $p$ and $q$ of $G$}.$$
\end{lemma}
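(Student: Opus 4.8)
The plan is to derive the identity directly from the defining linear system of the electrical network rather than from the continuous theory. Let $\chi_p,\chi_q\in\RR^n$ denote the indicator vectors of the vertices $p$ and $q$, and set $b=\chi_q-\chi_p$. Following the convention above, a unit current entering at $q$ and exiting at $p$ produces a potential vector $\phi$ satisfying the Kirchhoff--Ohm equations $\lm\phi=b$, and the effective resistance is the resulting potential drop $r(p,q)=\phi_q-\phi_p=b^{T}\phi$. Since $G$ is connected, $\lm$ has rank $n-1$ with kernel spanned by the all-ones vector $\mathbf 1$, so $\phi$ is determined only up to an additive multiple of $\mathbf 1$; this ambiguity cancels in the difference $\phi_q-\phi_p$, which is exactly why $r(p,q)$ is well defined.

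The key step is to express $\phi$ through the pseudo inverse. Because $b$ has zero coordinate sum it is orthogonal to $\mathbf 1$, hence lies in the range of $\lm$. I would invoke the two standard facts about the Moore--Penrose pseudo inverse of a symmetric matrix: first, $\plm$ is symmetric; second, $\lm\plm=\plm\lm=\idm-\tfrac1n\jm$, the orthogonal projection onto $\mathbf 1^{\perp}$, where $\jm$ is the all-ones matrix. Setting $\phi=\plm b$ and applying $\lm$ gives $\lm\phi=(\idm-\tfrac1n\jm)\,b=b$, since the projection fixes $b\perp\mathbf 1$; thus this $\phi$ is a genuine solution of the network equations.

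It then remains to substitute and expand. Using the symmetry of $\plm$,
\begin{equation*}
r(p,q)=b^{T}\plm b=(\chi_q-\chi_p)^{T}\plm(\chi_q-\chi_p)=\plp-2\plpq+\plq,
\end{equation*}
which is the asserted formula. The expansion is routine; the main obstacle is the conceptual step of reconciling the physical definition of effective resistance with the pseudo inverse, namely verifying that $\lm\plm$ is precisely the orthogonal projection onto the complement of the kernel and that the source vector $b$ lands in that subspace. Once this projection identity is established, the quadratic-form expansion is immediate and the kernel ambiguity is seen to be irrelevant.
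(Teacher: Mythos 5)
Your proof is correct, but there is nothing in the paper to compare it against: the paper states this lemma as a known result, citing Bapat \cite{RB2} and Klein--Randi\'c \cite[Theorem A]{D-M}, and gives no proof of its own. Your argument is the standard one underlying those references, and it is complete. The two points you rightly identify as the crux both check out: since $G$ is connected with positive conductances, $\lm$ is symmetric positive semidefinite with $\ker\lm=\operatorname{span}(\mathbf 1)$, so $\lm\plm=\plm\lm$ is the orthogonal projector onto $\operatorname{range}(\lm)=(\ker\lm)^{\perp}=\mathbf 1^{\perp}$, i.e.\ $\idm-\tfrac1n\jm$; and $b=\chi_q-\chi_p$ satisfies $b^{T}\mathbf 1=0$, so $\phi=\plm b$ genuinely solves $\lm\phi=b$ and the additive ambiguity in $\phi$ cancels in $r(p,q)=b^{T}\phi$. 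The final expansion $b^{T}\plm b=\plp-2\plpq+\plq$ is immediate from the symmetry of $\plm$. Two incidental remarks: the paper's setting allows self-loops and multiple edges, but this costs nothing, since loops drop out of $\lm$ and parallel edges just add conductances; and your identification of $\lm\plm$ with $\idm-\tfrac1n\jm$ is the same structural fact that justifies the computational formula \eqref{eqn disc5} that the paper uses later to evaluate $\plm$, so your proof fits naturally into the paper's framework.
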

Similarly, we have
\begin{lemma}\label{lem voltage1}\cite{C1}
Let $\ga$ be a graph with the discrete Laplacian $\lm$ and the voltage
function $j_x(y,z)$. Then for any $p$, $q$, $s$ in $\vv{\ga}$,
$j_p(q,s)=l_{pp}^+ - l_{pq}^+ -l_{ps}^+ +l_{qs}^+.$
\end{lemma}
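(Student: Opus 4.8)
The plan is to reduce the identity to a short linear-algebra computation with the pseudo-inverse $\plm$, solving the linear system that relates node voltages to injected currents. I would begin by recalling the defining relation for the electrical network associated to $\ga$: if $V$ denotes the column vector of node voltages and $I$ denotes the column vector of net currents injected at the nodes, then Kirchhoff's and Ohm's laws combine into the single matrix equation $\lm V = I$, with $\lm$ the discrete Laplacian. Since $p,q,s$ are vertices, the continuous voltage function of $\ga$ restricted to these points agrees with the solution of this discrete system, so it suffices to analyze $\lm V = I$.

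Next I would translate the setup behind $j_p(q,s)$ into a specific choice of $I$. Because a unit current enters at $s$ and exits at $p$, the injection vector is $I = e_s - e_p$, where $e_x$ is the standard basis vector supported at the vertex $x$. The crucial observation is that the entries of $I$ sum to zero, so $I$ is orthogonal to $\ker \lm$ (the all-ones vector, as $\ga$ is connected). This is exactly the condition ensuring that $V = \plm I$ is a genuine solution of $\lm V = I$, since $\lm \plm$ is the orthogonal projection onto the orthogonal complement of $\ker \lm$ and therefore fixes $I$. I would then fix the gauge by imposing reference voltage $0$ at $p$; as only voltage differences occur, the additive constant is immaterial, and the voltage difference between $q$ and $p$ is $j_p(q,s) = V_q - V_p$ with $V = \plm(e_s - e_p)$. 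Expanding coordinatewise gives $V_q = e_q^\top \plm(e_s - e_p) = \pdl{q}{s} - \pdl{q}{p}$ and $V_p = \pdl{p}{s} - \plp$, and using the symmetry $\pdl{q}{p} = \plpq$ of $\plm$ yields the claimed identity $j_p(q,s) = \plp - \plpq - \pdl{p}{s} + \pdl{q}{s}$ at once.

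The computation itself is brief; the only genuine subtlety is the singularity of $\lm$, so the step I expect to require the most care is justifying that replacing the non-invertible $\lm$ by $\plm$ returns the physically correct voltages. Concretely, this amounts to checking that $I = e_s - e_p$ is orthogonal to $\ker \lm$ (hence $\lm \plm I = I$) and that the reference-voltage normalization merely shifts $V$ by a multiple of the all-ones vector, leaving the difference $V_q - V_p$ unchanged. As a consistency check one can set $s = q$ and recover $j_p(q,q) = \plp - 2\plpq + \plq = r(p,q)$, in agreement with \lemref{lem disc}.
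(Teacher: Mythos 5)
Your proof is correct; the paper itself states \lemref{lem voltage1} without proof, deferring to \cite{C1}, and your argument is the standard one behind it: solve $\lm V = e_s - e_p$ via $V = \plm(e_s-e_p)$, with the singularity of $\lm$ correctly handled by noting the injection vector is orthogonal to the all-ones kernel and that the reference-voltage normalization only shifts $V$ along that kernel, leaving $j_p(q,s)=V_q-V_p$ well defined. The coordinate expansion, the use of the symmetry $l_{qp}^+=l_{pq}^+$, and the consistency check recovering $r(p,q)=l_{pp}^+-2l_{pq}^++l_{qq}^+$ from \lemref{lem disc} at $s=q$ are all sound.
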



The following result is one of our main result, usage of which made many of the results in the remaining parts of this paper possible.
\begin{theorem}[Explicit Rayleigh’s  Shorting Law]\label{thm res cont1}
For any $p, \, q, \, s, \, t \in \ga$, we have
\begin{equation*}\label{}
\begin{split}
r(s,t) &=r_{\ga_{pq}}(s,t)+\frac{1}{r(p,q)} \big [ j_{p}(q,s) - j_{p}(q,t) \big ]^2.
\end{split}
\end{equation*}
In particular, $\displaystyle{r(s,t) \geq r_{\ga_{pq}}(s,t) }$ and the equality holds if and only if  $j_{p}(q,s)=  j_{p}(q,t)$.
\end{theorem}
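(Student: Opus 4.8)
The plan is to realize the shorted graph $\ga_{pq}$ as $\ga$ with an extra branch of zero resistance joining $p$ and $q$, and to compute its effective resistance by superposition. First I would reduce to the case in which $p,q,s,t$ are all vertices of $\ga$: one may enlarge $\vv{\ga}$ by finitely many valence-two points without changing either the resistance or the voltage function, after which \lemref{lem disc} and \lemref{lem voltage1} apply to the four chosen points. I would also assume $p\neq q$, so that $r(p,q)>0$ (when $p=q$ the identification is trivial).

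The core is a single superposition. Drive unit current into $\ga$ at $t$ and out at $s$; with reference voltage $0$ at $s$ the potential at a point $x$ is $j_s(x,t)$, so the drop from $t$ to $s$ is $r(s,t)$. To pass to $\ga_{pq}$ I would superpose a second flow on $\ga$, namely unit current in at $p$ and out at $q$ scaled by an unknown factor $I$, whose potential at $x$ is $I\,j_q(x,p)$; this models the current $I$ carried by the zero-resistance short. Note that the combined external injections are unit at $t$, $-1$ at $s$, together with $+I$ at $p$ and $-I$ at $q$, which cancel at the merged node, so the superposed field is exactly the flow in $\ga_{pq}$ for unit current from $t$ to $s$. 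I would fix $I$ by forcing $p$ and $q$ to equal potential, using $j_q(p,p)=r(p,q)$ and $j_q(q,p)=0$ from \eqnref{eqn sym}, which gives
\[
I\,r(p,q)=j_s(q,t)-j_s(p,t).
\]
Reading off the drop from $t$ to $s$ in the combined field yields $r_{\ga_{pq}}(s,t)$, and rearranging produces
\[
r(s,t)-r_{\ga_{pq}}(s,t)=\frac{\big[j_s(q,t)-j_s(p,t)\big]\big[j_q(s,p)-j_q(t,p)\big]}{r(p,q)}.
\]

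It then remains to identify this product with $\big[j_p(q,s)-j_p(q,t)\big]^2$. Here I would expand each bracket with \lemref{lem voltage1}, $j_a(b,c)=l^+_{aa}-l^+_{ab}-l^+_{ac}+l^+_{bc}$, and use the symmetry $l^+_{ab}=l^+_{ba}$. A direct computation shows that all three differences equal $\pm\big(l^+_{ps}-l^+_{qs}+l^+_{qt}-l^+_{pt}\big)$: one finds $j_q(s,p)-j_q(t,p)=j_s(q,t)-j_s(p,t)$ and $j_p(q,s)-j_p(q,t)=-\big(j_s(q,t)-j_s(p,t)\big)$, so the product of the first two factors is precisely the square of the third. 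Since $r(p,q)>0$, the correction term is a square divided by a positive number, hence nonnegative, giving $r(s,t)\geq r_{\ga_{pq}}(s,t)$ with equality exactly when $j_p(q,s)=j_p(q,t)$.

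I expect the main obstacle to be the superposition bookkeeping rather than the final algebra: one must correctly model the identification of $p$ and $q$ as the insertion of a zero-resistance branch, track the differing reference points of the two superposed fields (only potential differences are meaningful, so this is harmless once stated carefully), and confirm that the net external current really is unit current from $t$ to $s$ so that the computed drop genuinely equals $r_{\ga_{pq}}(s,t)$. The identity $A\cdot C=B^2$ is then routine from \lemref{lem voltage1}; it may also be recognized as an instance of the reciprocity principle for resistive networks.
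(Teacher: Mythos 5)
Your proposal is correct, and it proves the theorem by a genuinely different route than the paper. The paper's proof is computational: it first applies circuit (Kron) reductions to replace $\ga$ by an equivalent complete graph on the four terminals $p,q,s,t$ with six edge resistances $a,\dots,f$, computes the Moore--Penrose inverse of the resulting $4\times 4$ Laplacian via \eqref{eqn disc5} (using Mathematica), reads off $r(s,t)$, $r(p,q)$, $j_p(q,s)$, $j_p(q,t)$ through \lemref{lem disc} and \lemref{lem voltage1} as in \eqref{eqn res and vol values1}, obtains $r_{\ga_{pq}}(s,t)$ by series--parallel reduction (equivalently the limit $d\to 0$), and then verifies the identity by brute algebra. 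You instead work directly on the original network: the potential of $\ga_{pq}$ for unit current from $t$ to $s$, pulled back to $\ga$, solves $\Delta f=\delta_t-\delta_s+I(\delta_p-\delta_q)$ for the unknown short-circuit current $I$, hence $f=j_s(\cdot,t)+I\,j_q(\cdot,p)$ up to an additive constant by uniqueness; the constraint $f(p)=f(q)$ gives $I\,r(p,q)=j_s(q,t)-j_s(p,t)$, and reading off the drop yields $r(s,t)-r_{\ga_{pq}}(s,t)=\tfrac{1}{r(p,q)}\big[j_s(q,t)-j_s(p,t)\big]\big[j_q(s,p)-j_q(t,p)\big]$. Your final step checks out: expanding via \lemref{lem voltage1}, both bracketed factors equal $l^+_{ps}-l^+_{qs}+l^+_{qt}-l^+_{pt}$ while $j_p(q,s)-j_p(q,t)$ is its negative, so the product is indeed the claimed square; note that this is exactly the Magical Identity (\thmref{thm magic}, from \cite[Thm 8]{BF}), or alternatively it follows from \eqnref{eqn sym2}, and since both rest only on \eqref{eqn sym} and \lemref{lem voltage1} there is no circularity in citing them instead of recomputing. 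As for what each approach buys: your superposition argument avoids machine computation and the reduction-to-$K_4$ step, makes the correction term structurally transparent (short-circuit current times dipole response), and is essentially the rank-one perturbation of the Green operator studied in \cite{AEGM}; the paper's method, at the cost of an opaque verification, is self-contained given the reduction machinery and produces the explicit closed forms \eqref{eqn res and vol values1}. Your stated caveats --- assuming $p\neq q$ so $r(p,q)>0$, enlarging $\vv{\ga}$ by valence-two points so the four points are vertices, tracking reference potentials only up to constants, and confirming the injections cancel at the merged node --- are exactly the right ones, and each holds.
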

\begin{proof}
We choose a graph model of $\ga$ so that its vertex set contains the points $p, \, q, \, s$ and $t$.
We can apply circuit reductions to $\ga$ so that it becomes equivalent to a graph having only these four vertices  $p, \, q, \, s$ and $t$. Such a graph is illustrated on the left part of
\figref{fig gaandgapq}. This equivalence is realized in the sense that any of the resistance values $r(x,y)$ (and so the voltage values $j_x(y,z)$) among these four vertices agree for two graphs.

Firstly, we consider the discrete Laplacian $L$ of the reduced $\ga$ and its Moore-Penrose inverse $L^+$, and then apply \lemref{lem disc} and \lemref{lem voltage1}. Namely, with the ordering of the vertices
$V=\{t, \, s, \, p, \, q \}$, discrete Laplacian matrix $L$ of the graph (reduced $\ga$) given in the left part of \figref{fig gaandgapq} is as follows:
$$
L=\left[
\begin{array}{cccc}
\frac{1}{a}+\frac{1}{b}+\frac{1}{f}  & -\frac{1}{b} & -\frac{1}{a} & -\frac{1}{f} \\[.15cm]
 -\frac{1}{b} & \frac{1}{b}+\frac{1}{e}+\frac{1}{c} & -\frac{1}{e} & -\frac{1}{c} \\[.15cm]
  -\frac{1}{a} & -\frac{1}{e} & \frac{1}{a}+\frac{1}{e}+\frac{1}{d} & -\frac{1}{d} \\[.15cm]
   -\frac{1}{f} & -\frac{1}{c} & -\frac{1}{d} & \frac{1}{f}+\frac{1}{d}+\frac{1}{e} \\[.15cm]
\end{array}
\right].
$$
Then we can compute the Moore-Penrose inverse $L^+$ of $L$ by using \cite{MMA} with
the following formula (see \cite[ch 10]{C-S}):
\begin{equation}\label{eqn disc5}
 \plm = \big( \lm - \frac{1}{4}\jm \big)^{-1} + \frac{1}{4} \jm.
\end{equation}
where $\jm$ is of size $4 \times 4$ and has all entries $1$. Next, we use $L^+$ and \lemref{lem disc} to obtain $r(s,t)$, $r(p,q)$, $j_{p}(q,s)$, $j_{p}(q,t)$. In this way, we obtain 
\begin{equation}\label{eqn res and vol values1}
\begin{split}
r(s,t) &=\frac{b (a c (d + e) + d e f + a (c + d + e) f + c e (d + f))}{K},\\
r(p,q) &= \frac{a d (c e + b (c + e)) + d ((a + b) c + (a + b + c) e) f}{K},\\
j_{p}(q,s) &= \frac{d e (b f + a (b + c + f))}{K},\\
j_{p}(q,t) &= \frac{a d (b (c + e) + e (c + f))}{K},
\end{split}
\end{equation}
where $K=a b c + a b d + a c d + b c d + a b e + a c e + b d e + c d e + 
 a c f + b c f + a d f + b d f + a e f + b e f + c e f + d e f$.

Secondly, we identify the vertices $p$ and $q$ in $\ga$ (and so in the reduced form of $\ga$) to obtain $\ga_{pq}$. The reduced graph of $\ga_{pq}$ is illustrated in the right part of \figref{fig gaandgapq}.
In this graph, we have
\begin{equation*}\label{eqn res contraction1}
\begin{split}
r_{\ga_{pq}}(s,t)=\frac{1}{\frac{1}{\frac{a f}{a+f}+\frac{c e}{c+e}}+\frac{1}{b}}.
\end{split}
\end{equation*}
Alternatively, we can use $r_{\ga_{pq}}(s,t) = \lim_{d \rightarrow 0} r(s,t)$ and \eqref{eqn res and vol values1} to obtain the same result for $r_{\ga_{pq}}(s,t)$.

Finally, using these values shows that the equality given in the theorem holds.
\end{proof}
\begin{figure}
\centering
\includegraphics[scale=0.53]{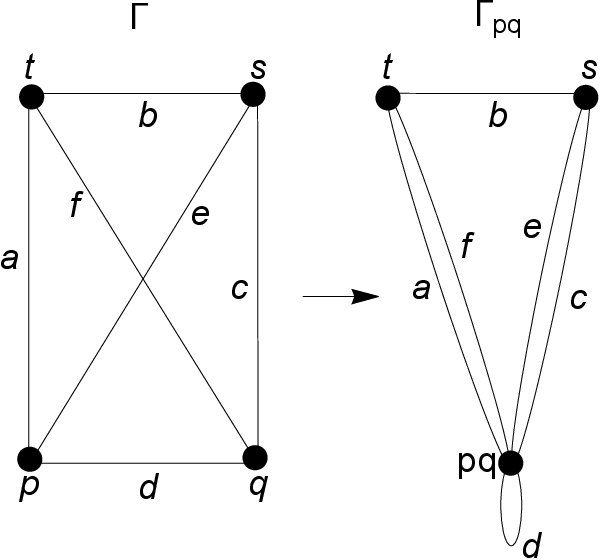} \caption{The graph $\Gamma$ and $\ga_{pq}$ after circuit reductions until four vertices.} \label{fig gaandgapq}
\end{figure}

\begin{theorem}\label{thm res cont2}
For any $p, \, q, \, s \in \ga$, we have
\begin{equation*}\label{}
\begin{split}
r_{\ga_{pq}}(p,s) &=\frac{r(p,s)r(q,s) - j_{s}(p,q)^2}{r(p,q)} 
= j_{s}(p,q) + \frac{j_{p}(q,s)j_{q}(p,s)}{r(p,q)}
=r(p,s)-\frac{j_{p}(q,s)^2}{r(p,q)}.
\end{split}
\end{equation*}
\end{theorem}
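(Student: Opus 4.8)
The plan is to read off the third expression directly from \thmref{thm res cont1}, and then to obtain the two remaining equalities by purely algebraic manipulation using only the elementary identities collected in \eqref{eqn sym}. First I would specialize the Explicit Rayleigh's Shorting Law to the case $t = p$. Since $j_p(q,p) = 0$ by \eqref{eqn sym}, \thmref{thm res cont1} gives $r(p,s) = r_{\ga_{pq}}(p,s) + \frac{1}{r(p,q)} j_p(q,s)^2$, and rearranging yields the third formula $r_{\ga_{pq}}(p,s) = r(p,s) - \frac{j_p(q,s)^2}{r(p,q)}$. This is the only step where I need any structural input about the contracted graph $\ga_{pq}$; everything after it is bookkeeping.

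The key observation is that the single relation $r(x,y) = j_x(y,z) + j_y(x,z)$ from \eqref{eqn sym}, combined with the symmetry $j_z(x,y) = j_z(y,x)$, supplies all the decompositions I need. Taking $z = s$ gives $r(p,q) = j_p(q,s) + j_q(p,s)$, taking $z = q$ gives $r(p,s) = j_p(q,s) + j_s(p,q)$, and taking $z = p$ gives $r(q,s) = j_q(p,s) + j_s(p,q)$. Subtracting the first of these from the third also records the identity $r(q,s) - r(p,q) = j_s(p,q) - j_p(q,s)$.

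With these in hand the two outstanding equalities are short. To match the third expression with the middle one, I would factor $j_p(q,s)^2 + j_p(q,s) j_q(p,s) = j_p(q,s)\big(j_p(q,s) + j_q(p,s)\big) = j_p(q,s)\, r(p,q)$, so that the difference of the two candidate formulas collapses to $r(p,s) - j_s(p,q) - j_p(q,s)$, which vanishes by the decomposition of $r(p,s)$. To match the third expression with the first, I would clear the common denominator $r(p,q)$ and reduce the claim to $r(p,s)\big(r(q,s) - r(p,q)\big) = j_s(p,q)^2 - j_p(q,s)^2$; the right-hand side factors as $\big(j_s(p,q) - j_p(q,s)\big)\big(j_s(p,q) + j_p(q,s)\big)$, and the decompositions identify these two factors with $r(q,s) - r(p,q)$ and $r(p,s)$ respectively, so the equality holds identically with no need to divide by anything (hence no $p = s$ exception arises).

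I do not anticipate a genuine obstacle: once the third formula is extracted from \thmref{thm res cont1}, the rest follows from the linear and symmetry relations in \eqref{eqn sym}. The only point requiring mild care is the ordering of arguments in the voltage function, where the symmetry $j_z(x,y) = j_z(y,x)$ must be invoked to align $j_p(s,q)$ with $j_p(q,s)$, and likewise for the mixed terms in $r(q,s)$. As an independent check one could instead substitute the pseudo-inverse expressions of \lemref{lem disc} and \lemref{lem voltage1} and verify that all three formulas coincide as rational functions of the entries $l^+_{xy}$, but the argument above avoids that computation entirely.
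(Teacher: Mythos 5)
Your proposal is correct and follows essentially the same route as the paper's own (first) proof: setting $t=p$ in \thmref{thm res cont1} with $j_p(q,p)=0$ to get the third formula, then deriving the other two equalities from the relations in \eqref{eqn sym}. You simply spell out the algebra the paper leaves implicit, and your factorizations check out; the paper additionally offers a second, independent proof via circuit reduction to a $Y$-shaped graph, which your argument does not need.
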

\begin{proof}
Setting $t=p$ in \thmref{thm res cont1} and using the fact that $j_p(q,p)=0$ gives
$r_{\ga_{pq}}(p,s)=r(p,s)-\frac{j_{p}(q,s)^2}{r(p,q)}$. The other two equalities follow this one if we use the relations provided in \eqref{eqn sym}. 

Alternatively, we can give a second proof the theorem by circuit reductions as we did in the proof of  \thmref{thm res cont1}.

We choose a graph model of $\ga$ so that its vertex set contains the points $p, \, q$ and $s$.
We can apply circuit reductions to $\ga$ so that it becomes equivalent to a $Y$-shaped graph having these three vertices  $p, \, q$ and $s$ as pendant vertices. Such a graph is illustrated on the left part of \figref{fig gandgpq}.

Next, we identify the vertices $p$ and $q$ in $\ga$ (and so in the reduced form of $\ga$) to obtain 
$\ga_{pq}$. The reduced graph of $\ga_{pq}$ is illustrated in the right part of \figref{fig gandgpq}.
In this graph, we have
\begin{equation*}\label{eqn res contraction2}
\begin{split}
r_{\ga_{pq}}(p,s)=j_{s}(p,q) + \frac{j_{p}(q,s)j_{q}(p,s)}{r(p,q)}.
\end{split}
\end{equation*}
Again, the other equalities in the theorem follows from this one and the relations given in \eqref{eqn sym}.
\begin{figure}
\centering
\includegraphics[scale=0.53]{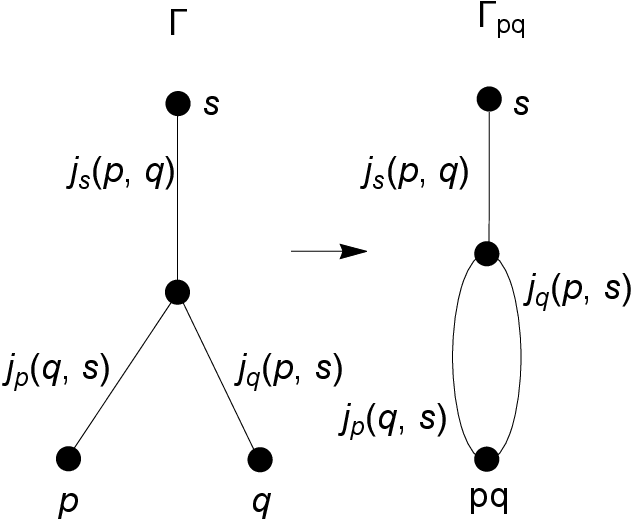} \caption{The graph $\Gamma$ and $\ga_{pq}$ after circuit reductions until three vertices.} \label{fig gandgpq}
\end{figure}
\end{proof}
Note that the relation $r(p,q)=j_p(q,s)+j_q(p,s)$ given in \eqref{eqn sym} can also be derived from the graph shown in the left part of \figref{fig gandgpq}. We can also derive the following relation:
\begin{equation}\label{eqn sym2}
\begin{split}
2j_p(q,s)=r(p,q)+r(p,s)-r(q,s) \quad \text{for each $p$, $q$ and $s$ in $\ga$}
\end{split}
\end{equation}
We can use \eqref{eqn sym} to derive the following equalities (the first one was given in \cite[Thm 8]{BF} as ``Magical Identity"):
\begin{theorem}[Magical Identities]\label{thm magic}
 For any $p$, $q$, $s$ and $t$ in $\ga$, we have
\begin{equation*}\label{eqn sym3}
\begin{split}
j_p(q,s)-j_p(q,t)=j_t(q,s)-j_t(p,s)=j_s(p,t)-j_s(q,t)=j_q(p,t)-j_q(p,s).
\end{split}
\end{equation*}
\end{theorem}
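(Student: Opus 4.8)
The plan is to convert every voltage difference in the statement into a symmetric combination of resistance values using the cosine-type identity \eqref{eqn sym2}, and then to check that all four expressions collapse to the same quantity. Since \eqref{eqn sym2} was itself derived from \eqref{eqn sym} just above, everything rests only on the basic relations already recorded.

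First I would write \eqref{eqn sym2} in the generic form $2j_a(b,c)=r(a,b)+r(a,c)-r(b,c)$ and apply it to the two terms of the first difference. The shared term $r(p,q)$ cancels, leaving
\[
2\bigl[j_p(q,s)-j_p(q,t)\bigr]=r(p,s)-r(q,s)-r(p,t)+r(q,t).
\]

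Next I would carry out the identical reduction on the remaining three differences. Expanding $j_t(q,s)-j_t(p,s)$, the term $r(t,s)$ drops out; expanding $j_s(p,t)-j_s(q,t)$, the term $r(s,t)$ drops out; and expanding $j_q(p,t)-j_q(p,s)$, the term $r(q,p)$ drops out. In each case, after applying the symmetry $r(x,y)=r(y,x)$ from \eqref{eqn sym} to line up the arguments, the right-hand side becomes exactly $r(p,s)-r(q,s)-r(p,t)+r(q,t)$. Hence the four doubled differences coincide, which is the asserted chain of equalities; incidentally this shows the common value equals $\tfrac12\bigl[r(p,s)+r(q,t)-r(q,s)-r(p,t)\bigr]$, a fully symmetric four-point expression in the resistance function.

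The argument is purely mechanical, so I do not expect a genuine obstacle: the only thing needing care is the consistent use of the symmetry of $r$, so that terms such as $r(t,q)$ and $r(q,t)$ are correctly identified before comparing the four right-hand sides. A slicker alternative, matching the spirit of the circuit-reduction proofs given earlier, would be to read the identities directly off the three-terminal reduction in Figure~\ref{fig gandgpq}; but the substitution via \eqref{eqn sym2} is the most transparent route.
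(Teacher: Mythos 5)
Your proof is correct and follows exactly the route the paper indicates: the paper states the theorem with only the remark that it follows from \eqref{eqn sym} (via \eqref{eqn sym2}), and your substitution $2j_a(b,c)=r(a,b)+r(a,c)-r(b,c)$ applied to all four differences, reducing each to $r(p,s)+r(q,t)-r(q,s)-r(p,t)$, is precisely that computation carried out in full. The symmetric four-point expression you identify for the common value is also consistent with how the paper later uses these identities (e.g., in Theorem~\ref{thm res Euler second}).
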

\begin{remark}
We can use \thmref{thm magic} to state the equality in \thmref{thm res cont1} in different forms.
\end{remark}

\begin{theorem}\label{thm vol cont1}
For any $p, \, q, \, s, \, t, \, u \in \ga$, we have
\begin{equation*}\label{}
\begin{split}
j_{u}(t,s) &=j_{u}^{\ga_{pq}}(t,s) +\frac{\big ( j_{p}(q,t)-j_{p}(q,u) \big )^2 
+ \big ( j_{p}(q,s)-j_{p}(q,u) \big )^2 -\big ( j_{p}(q,t)-j_{p}(q,s) \big )^2}{2r(p,q)} \\
&=j_{u}^{\ga_{pq}}(t,s) +\frac{1}{r(p,q)} \big( j_{p}(q,u)-j_{p}(q,t) \big) \big( j_{p}(q,u)-j_{p}(q,s) \big).
\end{split}
\end{equation*}
\end{theorem}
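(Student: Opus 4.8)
The plan is to reduce this statement about voltages to the resistance identity already established in \thmref{thm res cont1}. The bridge is the elementary relation \eqref{eqn sym2}, which expresses any voltage value as a combination of three resistance values; crucially it is valid on \emph{any} resistive network, hence on both $\ga$ and its contraction $\ga_{pq}$.

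First I would write, applying \eqref{eqn sym2} on $\ga$ and on $\ga_{pq}$,
\[ 2 j_u(t,s) = r(u,t)+r(u,s)-r(t,s), \qquad 2 j_u^{\ga_{pq}}(t,s) = r_{\ga_{pq}}(u,t)+r_{\ga_{pq}}(u,s)-r_{\ga_{pq}}(t,s). \]
Subtracting these, the quantity $2\big(j_u(t,s)-j_u^{\ga_{pq}}(t,s)\big)$ becomes an alternating sum of three differences of the shape $r(x,y)-r_{\ga_{pq}}(x,y)$, one for each pair drawn from $\{t,s,u\}$.

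Next I would apply \thmref{thm res cont1} to each of these three resistance differences, since that theorem gives $r(x,y)-r_{\ga_{pq}}(x,y)=\frac{1}{r(p,q)}\big[j_p(q,x)-j_p(q,y)\big]^2$. Substituting for the pairs $(u,t)$, $(u,s)$ and $(t,s)$ and dividing by $2$ produces exactly the first displayed form in the theorem, with the three squared differences over $2r(p,q)$.

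Finally, to pass to the second (factored) form I would invoke the polarization identity: writing $A=j_p(q,u)-j_p(q,t)$ and $B=j_p(q,u)-j_p(q,s)$, one has $A-B=j_p(q,s)-j_p(q,t)$, so the numerator $A^2+B^2-(A-B)^2$ collapses to $2AB$, and dividing by $2r(p,q)$ yields the product form. I do not expect a genuine obstacle here; the only points deserving a word of care are that \eqref{eqn sym2} applies verbatim to the contracted network $\ga_{pq}$, and that $p\neq q$ so that $r(p,q)\neq 0$, which is implicit whenever the contraction $\ga_{pq}$ is nontrivial.
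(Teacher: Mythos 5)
Your proposal is correct and follows exactly the paper's own route: the paper's proof is precisely the one-line remark that the result follows from \thmref{thm res cont1} and \eqref{eqn sym2}, which you have merely expanded in full (subtracting the two instances of \eqref{eqn sym2} on $\ga$ and $\ga_{pq}$, substituting the squared-difference formula for each of the three resistance gaps, and collapsing $A^2+B^2-(A-B)^2=2AB$ for the factored form). No gaps; your closing remarks on $r(p,q)\neq 0$ and the validity of \eqref{eqn sym2} on $\ga_{pq}$ are sound.
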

\begin{proof}
The result follows from \thmref{thm res cont1} and \eqref{eqn sym2}.
\end{proof}
Taking $u=p$ in \thmref{thm vol cont1} gives
\begin{equation}\label{eqn sym4}
\begin{split}
j_{p}(t,s) &=j_{p}^{\ga_{pq}}(t,s) +\frac{j_{p}(q,t) j_{p}(q,s)}{r(p,q)}.
\end{split}
\end{equation}
Similarly, setting $t=p$ in \thmref{thm vol cont1} gives
\begin{equation}\label{eqn sym5}
\begin{split}
j_{u}(p,s) &=j_{u}^{\ga_{pq}}(p,s) + \frac{j_{p}(q,u)^2-j_{p}(q,u) j_{p}(q,s)}{r(p,q)}.
\end{split}
\end{equation}

\section{The Deletion and the Contraction Identities for the Resistance Function (Revisited) }\label{sec del and cont}
Let $e_{i}$ be the $i$-th edge, $i \in \{1,2, \dots m\}$, of a given metrized graph $\ga$, and let $\li$ be its length.
We denote its end points by $\pp$ and $\qq$.
Let $\oga_i$ be the graph obtained from $\ga$ by contracting $e_{i}$ to its end points. These points become identical in
$\oga_i$, i.e., $\pp=\qq$ in $\oga_i$.
Likewise, let $\ga-e_i$ be the graph obtained from $\ga$ by deleting the {\em interior of} the edge $e_i$.
We set $\ri:=r_{\ga-e_i}(\pp,\qq)$ when $e_i$ is not a bridge.

In \cite[Thm 4.1]{C4}, we showed the relations between $r(p,q)$ and $r_{\ga-e_i}(p,q)$, and between $r(p,q)$ and $r_{\oga_i}(p,q)$ for any edge $e_i$ and vertices $p$ and $q$ of $\ga$. In this section, we establish improved versions of those relations. These are given in \thmref{thm del1} and \thmref{thm res cont edge} below.

\begin{theorem}[Explicit Rayleigh’s  Cutting Law]\label{thm del1}
Let $e_i$ be an edge with end points $\pp$ and $\qq$. If  $e_i$ is not a bridge of the metrized graph $\ga$,  for any $s, \, t \in \ga-e_i$ we have
\begin{equation*}\label{}
\begin{split}
r(s,t)&=r_{\ga-e_i}(s,t)-\frac{1}{\li + \ri} \big ( j_{\pp}^{\ga-e_i}(\qq,s)-   j_{\pp}^{\ga-e_i}(\qq,t) \big )^2\\
&=r_{\ga-e_i}(s,t)-\frac{\li + \ri}{\li^2} \big ( j_{\pp}(\qq,s)-   j_{\pp}(\qq,t) \big )^2.
\end{split}
\end{equation*}
In particular, $\displaystyle{r_{\ga-e_i}(s,t) \geq r(s,t)}$ and the equality holds if and only if  $j_{\pp}(\qq,s)=  j_{\pp}(\qq,t)$.
\end{theorem}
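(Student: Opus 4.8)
The plan is to derive both equalities from the Explicit Rayleigh's Shorting Law (\thmref{thm res cont1}) rather than repeating the circuit reduction from scratch, by observing that deleting $e_i$ is a shorting operation on a cleverly augmented graph. First I would form $\widehat{\ga}$ from $\ga-e_i$ by adjoining one new vertex $w$ together with a single edge of length $\li$ joining $w$ to $\pp$. In $\widehat{\ga}$ the vertex $w$ is a pendant, so the edge $w\pp$ carries no current whenever both terminals lie in $\ga-e_i$; consequently $r_{\widehat{\ga}}(s,t)=r_{\ga-e_i}(s,t)$ for all $s,t\in\ga-e_i$, while $r_{\widehat{\ga}}(w,\qq)=\li+\ri$ because the only route from $w$ to $\qq$ runs in series through $\pp$. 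The crucial point is that identifying $w$ with $\qq$ turns the pendant edge $w\pp$ back into $e_i$, so $\widehat{\ga}_{w\qq}=\ga$.

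Applying \thmref{thm res cont1} to $\widehat{\ga}$ with the pair $(w,\qq)$ then reads $r_{\ga-e_i}(s,t)=r(s,t)+\frac{1}{\li+\ri}\big[j_w^{\widehat{\ga}}(\qq,s)-j_w^{\widehat{\ga}}(\qq,t)\big]^2$. To finish the first equality I would evaluate $j_w^{\widehat{\ga}}(\qq,s)$: sending unit current from $s$ to $w$ forces all of it through the pendant edge, so $\pp$ sits at potential $\li$ above $w$ and the current inside $\ga-e_i$ is exactly the dipole from $s$ to $\pp$; hence $j_w^{\widehat{\ga}}(\qq,s)=\li+j_{\pp}^{\ga-e_i}(\qq,s)$, and likewise with $t$. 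The additive constant $\li$ cancels in the difference, giving $j_w^{\widehat{\ga}}(\qq,s)-j_w^{\widehat{\ga}}(\qq,t)=j_{\pp}^{\ga-e_i}(\qq,s)-j_{\pp}^{\ga-e_i}(\qq,t)$, and rearranging produces the first displayed equality (boundary cases where $s$ or $t$ equals $\pp$ or $\qq$ follow by continuity of the voltage function).

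For the second equality I need the voltage functions in $\ga$ rather than in $\ga-e_i$, so the key step is the transformation law $j_{\pp}^{\ga-e_i}(\qq,s)=\frac{\li+\ri}{\li}\,j_{\pp}(\qq,s)$. I would prove this by superposition on $\ga-e_i$. In $\ga$, driving unit current from $s$ to $\pp$ makes the edge $e_i$ carry current $I=j_{\pp}(\qq,s)/\li$ from $\qq$ to $\pp$, so deleting $e_i$ while keeping the same node potentials amounts, on $\ga-e_i$, to the superposition of the dipole $s\to\pp$ and $I$ times the dipole $\pp\to\qq$. Reading the potential of $\qq$ relative to $\pp$ off each piece gives $j_{\pp}(\qq,s)=j_{\pp}^{\ga-e_i}(\qq,s)-I\,\ri$, where the $-I\,\ri$ term is $I$ times the potential $-\ri$ produced at $\qq$ by the unit $\pp\to\qq$ dipole (using $j_{\qq}(\pp,\pp)=r(\pp,\qq)=\ri$ in $\ga-e_i$ from \eqref{eqn sym}). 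Solving $j_{\pp}(\qq,s)=j_{\pp}^{\ga-e_i}(\qq,s)-\frac{\ri}{\li}j_{\pp}(\qq,s)$ yields the claimed factor, and the same holds with $t$ in place of $s$.

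Substituting this transformation law into the first equality turns the coefficient $\frac{1}{\li+\ri}$ and the $\ga-e_i$ voltages into $\frac{\li+\ri}{\li^2}$ and the $\ga$ voltages, which is the second equality; the stated inequality $r_{\ga-e_i}(s,t)\ge r(s,t)$ and its equality case then follow at once, since the subtracted quantity is a nonnegative multiple of $\big[j_{\pp}(\qq,s)-j_{\pp}(\qq,t)\big]^2$. I expect the main obstacle to be pinning down the voltage transformation law with the correct factor: the first equality drops out almost for free once the pendant-vertex trick is in hand, but the second requires careful current bookkeeping across the deleted edge, in particular getting the sign and the role of $\ri$ right in the superposition. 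An alternative, entirely self-contained route would mirror the proof of \thmref{thm res cont1}, reducing $\ga-e_i$ to a graph on $\{\pp,\qq,s,t\}$ and computing every quantity explicitly from its Laplacian and Moore--Penrose inverse; I would keep that in reserve as a check on the factor $\frac{\li+\ri}{\li}$.
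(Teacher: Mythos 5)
Your proposal is correct, and it reaches the result by a genuinely different construction than the paper's, even though both reduce to \thmref{thm res cont1}. The paper runs the reduction in the opposite direction: it forms $\beta$ from $\ga$ by adding a \emph{parallel} edge of length $L$ between $\pp$ and $\qq$, applies \thmref{thm res cont1} twice (once to $\beta$, once to $\ga$), and uses the voltage scaling laws $j_{p}^{\beta}(q,s)=\frac{L}{L+r(p,q)}j_{p}(q,s)$ — stated without proof — to arrive at the interpolation formula \eqref{eqn beta cont1}, of which the theorem is the instance with base graph $\ga-e_i$ and $L=\li$. Your pendant-vertex trick instead builds $\widehat{\ga}$ from $\ga-e_i$ by attaching $w$ to $\pp$, uses $\widehat{\ga}_{w\qq}=\ga$, and needs only \emph{one} application of \thmref{thm res cont1}; since the pendant edge carries no current between terminals in $\ga-e_i$, the voltage bookkeeping is purely additive (the constant $\li$ cancels in the difference $j_w^{\widehat{\ga}}(\qq,s)-j_w^{\widehat{\ga}}(\qq,t)$), so the first equality comes out with no multiplicative scaling law at all. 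The scaling law $j_{\pp}(\qq,s)=\frac{\li}{\li+\ri}j_{\pp}^{\ga-e_i}(\qq,s)$ enters only for the second equality; this is exactly the paper's \eqref{eqn del1} (stated there, in the proof of \thmref{thm res cont edge}, again without derivation), and your superposition argument — decomposing the $\ga$-flow restricted to $\ga-e_i$ as the dipole $s\to\pp$ plus $I=j_{\pp}(\qq,s)/\li$ times the dipole $\pp\to\qq$, with the $-I\,\ri$ correction at $\qq$ — is a correct, self-contained proof of it, which is a genuine addition. What the paper's route buys in exchange is the more general formula \eqref{eqn beta cont1}, valid for an added edge of arbitrary length $L$, which interpolates continuously between contraction ($L\to 0$) and deletion ($L\to\infty$); your route buys economy and the fact that every input to \thmref{thm res cont1} is computed rather than quoted.
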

\begin{proof}
Let $\beta$ be the metrized graph obtained from $\ga$ by adding an edge, of length $L$,  connecting the vertices $\pp$ and $\qq$. Then we have $r_{\beta_{pq}}(s,t)=r_{\ga_{pq}}(s,t)$, $r_{\beta}(p,q)=\frac{L r(p,q)}{L+r(p,q)}$, $j_{p}^{\beta}(q,s)=\frac{L}{L+r(p,q)}j_{p}(q,s)$ and $j_{p}^{\beta}(q,t)=\frac{L}{L+r(p,q)}j_{p}(q,t)$. Moreover, by applying \thmref{thm res cont1} to $\beta$ we have
\begin{equation}\label{eqn beta cont1}
\begin{split}
r_{\beta}(s,t) &=r_{\beta_{pq}}(s,t)+\frac{1}{r_{\beta}(p,q)} \big [ j_{p}^{\beta}(q,s) - j_{p}^{\beta}(q,t) \big ]^2\\
&=r_{\ga_{pq}}(s,t)+\frac{L+r(p,q)}{ L \cdot r(p,q)} \big [ \frac{L}{L+r(p,q)} \big( j_{p}(q,s) - j_{p}(q,t) \big) \big ]^2 \\
&=r(s,t)-\frac{\big( j_{p}(q,s) - j_{p}(q,t) \big)^2}{r(p,q)}+\frac{L}{ r(p,q) (L +r(p,q))} \big( j_{p}(q,s) - j_{p}(q,t) \big)^2\\
&=r(s,t)- \frac{1}{L+r(p,q)}\big( j_{p}(q,s) - j_{p}(q,t) \big)^2,
\end{split}
\end{equation}
where the third equality follows by applying \thmref{thm res cont1} to $\ga$.
Since $\ga$ is obtained by adding the edge $e_i$ to connect $\pp$ and $\qq$ in $\ga-e_i$, the equalities in the theorem follows from \eqref{eqn beta cont1}.
\end{proof}
We can extend \thmref{thm del1} to voltage function:
\begin{theorem}\label{thm vol del1}
Let $e_i$ be an edge with end points $\pp$ and $\qq$. If  $e_i$ is not a bridge of the metrized graph $\ga$,
for any $s, \, t, \, u \in \ga-e_i$ we have,
\begin{equation*}\label{}
\begin{split}
j_{u}(t,s) &=j_{u}^{\ga-e_i}(t,s) -\frac{1}{\li+\ri} \big( j_{\pp}^{\ga-e_i}(\qq,u)-j_{\pp}^{\ga-e_i}(\qq,t) \big) \big( j_{\pp}^{\ga-e_i}(\qq,u)-j_{\pp}^{\ga-e_i}(\qq,s) \big).
\end{split}
\end{equation*}
\end{theorem}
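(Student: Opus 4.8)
The plan is to derive this exactly as \thmref{thm vol cont1} was derived from \thmref{thm res cont1}: express the voltage function entirely through resistances by means of \eqref{eqn sym2}, and then apply the Cutting Law \thmref{thm del1} term by term. First I would use \eqref{eqn sym2} in $\ga$ to write
\[
2 j_{u}(t,s) = r(u,t) + r(u,s) - r(t,s).
\]
Since $s, \, t, \, u \in \ga-e_i$, each resistance on the right can be rewritten using the first equality of \thmref{thm del1}. Abbreviating $A_{x} := j_{\pp}^{\ga-e_i}(\qq,x)$ for $x \in \{s,t,u\}$, this gives
\[
2 j_{u}(t,s) = \big[ r_{\ga-e_i}(u,t) + r_{\ga-e_i}(u,s) - r_{\ga-e_i}(t,s) \big] - \frac{1}{\li+\ri}\big[ (A_{u}-A_{t})^2 + (A_{u}-A_{s})^2 - (A_{t}-A_{s})^2 \big].
\]

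Next I would apply \eqref{eqn sym2} inside the graph $\ga-e_i$ to recognize the bracketed resistance combination as $2\, j_{u}^{\ga-e_i}(t,s)$. The only remaining computation is the elementary polarization identity
\[
(A_{u}-A_{t})^2 + (A_{u}-A_{s})^2 - (A_{t}-A_{s})^2 = 2\,(A_{u}-A_{t})(A_{u}-A_{s}),
\]
after which dividing through by $2$ yields precisely the factored form claimed in the statement. Note that stopping just before this polarization step would instead produce a manifestly symmetric three-square version of the identity, mirroring the first line of \thmref{thm vol cont1}.

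I do not expect a genuine obstacle here, since the argument is a direct transcription of the resistance-to-voltage passage already used for the contraction case. The step needing the most care is ensuring that all three applications of \thmref{thm del1} employ the \emph{same} voltage reference in $\ga-e_i$, so that the resistance combination collapses cleanly to $j_{u}^{\ga-e_i}(t,s)$ via \eqref{eqn sym2}; this is exactly what dictates the use of the $j^{\ga-e_i}$ form of the Cutting Law rather than its $j$ form, and it is what guarantees that the correction term reassembles into a single product with denominator $\li + \ri$.
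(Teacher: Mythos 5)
Your proposal is correct and matches the paper's own proof, which consists precisely of the citation ``follows from \thmref{thm del1} and \eqref{eqn sym2}''; you have simply written out the intended computation, including the sign bookkeeping and the polarization identity $(A_u-A_t)^2+(A_u-A_s)^2-(A_t-A_s)^2=2(A_u-A_t)(A_u-A_s)$, correctly. No gaps.
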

\begin{proof}
The proof follows from \thmref{thm del1} and \eqref{eqn sym2}.
\end{proof}

\begin{theorem}[Explicit Rayleigh's Monotonicity Law I]\label{thm res cont edge}
Let $e_i$ be an edge of $\ga$ with end points $\pp$ and $\qq$.
If $e_i$ is not a bridge of $\ga$, for any $s, \, t \in \ga-e_i$, we have
\begin{equation*}\label{}
\begin{split}
r(s,t) &=r_{\oga_i}(s,t)+\frac{\li+\ri}{\li \cdot \ri} \big [ j_{\pp}(\qq,s) - j_{\pp}(\qq,t) \big ]^2\\
&=r_{\oga_i}(s,t)+\frac{\li}{\ri(\li + \ri)} \big [ j_{\pp}^{\ga-e_i}(\qq,s) - j_{\pp}^{\ga-e_i}(\qq,t) \big ]^2.
\end{split}
\end{equation*}
In particular, $\displaystyle{ r(s,t)  \geq r_{\oga_i}(s,t)}$ and the equality holds if and only if  $j_{\pp}(\qq,s)=  j_{\pp}(\qq,t)$.

If $e_i$ is a bridge of $\ga$, $r(s,t) =r_{\oga_i}(s,t)$ when $s$ and $t$ are on the same connected component of $\ga-e_i$. Otherwise,  $r(s,t) =r_{\oga_i}(s,t)+\li$. 
\end{theorem}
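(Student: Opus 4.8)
The plan is to reduce everything to the already-established Explicit Rayleigh's Shorting Law (\thmref{thm res cont1}), since contracting the edge $e_i$ is exactly identifying its endpoints $\pp$ and $\qq$; a self-loop left behind by the contraction carries no current and does not affect any effective resistance, so $r_{\oga_i}(s,t)=r_{\ga_{\pp\qq}}(s,t)$. Applying \thmref{thm res cont1} with $p=\pp$ and $q=\qq$ then gives immediately
$$r(s,t)=r_{\oga_i}(s,t)+\frac{1}{r(\pp,\qq)}\big[j_{\pp}(\qq,s)-j_{\pp}(\qq,t)\big]^2.$$
The remaining work is purely to rewrite the coefficient $1/r(\pp,\qq)$ and the voltage differences in terms of the deleted-edge data $\li$ and $\ri$.

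First I would compute $r(\pp,\qq)$ by viewing $\ga$ as $\ga-e_i$ with the edge $e_i$ of length $\li$ restored in parallel to the rest of the network, whose resistance between $\pp$ and $\qq$ is $\ri=r_{\ga-e_i}(\pp,\qq)$. The parallel-combination formula gives $r(\pp,\qq)=\frac{\li \ri}{\li+\ri}$, hence $\frac{1}{r(\pp,\qq)}=\frac{\li+\ri}{\li \ri}$, which yields the first displayed equality of the theorem. The stated inequality $r(s,t)\geq r_{\oga_i}(s,t)$, with equality exactly when $j_{\pp}(\qq,s)=j_{\pp}(\qq,t)$, then follows because the coefficient $\frac{\li+\ri}{\li \ri}$ is strictly positive.

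For the second equality I would invoke the voltage-scaling identity already established in the proof of \thmref{thm del1}: restoring an edge of length $\li$ between $\pp$ and $\qq$ in $\ga-e_i$ scales the voltage function by the factor $\frac{\li}{\li+\ri}$, so that $j_{\pp}(\qq,s)=\frac{\li}{\li+\ri}\,j_{\pp}^{\ga-e_i}(\qq,s)$ and likewise with $s$ replaced by $t$. Substituting this into the first equality and simplifying $\frac{\li+\ri}{\li \ri}\cdot\frac{\li^2}{(\li+\ri)^2}=\frac{\li}{\ri(\li+\ri)}$ produces the second form.

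Finally I would treat the bridge case by a direct series argument, which I expect to be the only genuinely delicate point. When $e_i$ is a bridge, $\ga-e_i$ splits into two components. If $s$ and $t$ lie in the same component, no current crosses $e_i$, so contracting it changes nothing and $r(s,t)=r_{\oga_i}(s,t)$. If $s$ and $t$ lie in different components, the cut vertex structure forces every $s$-$t$ current to pass through $e_i$ in series, contributing exactly $\li$; contracting $e_i$ removes precisely this series term, giving $r(s,t)=r_{\oga_i}(s,t)+\li$. The care required here is to justify that effective resistance decomposes as a series sum across a bridge, which follows from the fact that a cut vertex forces all the current between the two sides to pass through it.
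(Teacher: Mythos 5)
Your proposal is correct and follows essentially the same route as the paper's own proof: apply \thmref{thm res cont1} at $\pp,\qq$, use $r_{\oga_i}(s,t)=r_{\ga_{\pp\qq}}(s,t)$ together with the parallel formula $r(\pp,\qq)=\frac{\li\ri}{\li+\ri}$ for the first equality, rescale the voltages by $j_{\pp}(\qq,s)=\frac{\li}{\li+\ri}j_{\pp}^{\ga-e_i}(\qq,s)$ (the paper's \eqref{eqn del1}) for the second, and handle the bridge case by the same series decomposition. Your only additions are explicit justifications the paper leaves implicit (the self-loop carrying no current, and the positivity of the coefficient for the equality criterion), which are fine.
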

\begin{proof}
Suppose $e_i$ is not a bridge.
We apply \thmref{thm res cont1} with  $s, \, t, \, \pp, \, \qq \in \ga$ to obtain
\begin{equation*}\label{}
\begin{split}
r(s,t) &=r_{\ga_{p_iq_i}}(s,t)+\frac{1}{r(\pp,\qq)} \big [ j_{\pp}(\qq,s) - j_{\pp}(\qq,t) \big ]^2.
\end{split}
\end{equation*}
Then the first equality follows by using the facts that $r_{\ga_{p_iq_i}}(s,t)=r_{\oga_i}(s,t)$ and $r(\pp,\qq)=\frac{\li \ri}{\li+\ri}$. The second equality is obtained from the first one by using the following equalities:
\begin{equation}\label{eqn del1}
\begin{split}
j_{\pp}(\qq,s)=\frac{\li}{\li+\ri}j_{\pp}^{\ga-e_i}(\qq,s), \quad \text{and} \quad 
j_{\pp}(\qq,t)=\frac{\li}{\li+\ri}j_{\pp}^{\ga-e_i}(\qq,t).
\end{split}
\end{equation}
Suppose $e_i$ is a bridge. 

If $s$ and $t$ are on the same connected component of $\ga-e_i$, $r(s,t)$ is independent of $\li$ and so $r(s,t) =r_{\oga_i}(s,t)$. If $s$ and $t$ are on distinct connected components of $\ga-e_i$, $s$ is on the side of $\pp$ or $\qq$. If it is on the side of $\pp$, $r(s,t) =r(s,\pp)+\li+r(\qq,t)=r_{\oga_i}(s,t)+\li$. If it is on the side of $\qq$, $r(s,t) =r(s,\qq)+\li+r(\pp,t)=r_{\oga_i}(s,t)+\li$. 
\end{proof}
We can extend \thmref{thm res cont edge} to voltage functions:
\begin{theorem}\label{thm vol cont2}
Let $e_i$ be an edge with end points $\pp$ and $\qq$. If  $e_i$ is not a bridge of the metrized graph $\ga$,
for any $s, \, t, \, u \in \ga-e_i$ we have,
\begin{equation*}\label{}
\begin{split}
j_{u}(t,s) &=j_{u}^{\oga_i}(t,s) +\frac{\li}{\ri(\li+\ri)} \big( j_{\pp}^{\ga-e_i}(\qq,u)-j_{\pp}^{\ga-e_i}(\qq,t) \big) \big( j_{\pp}^{\ga-e_i}(\qq,u)-j_{\pp}^{\ga-e_i}(\qq,s) \big).
\end{split}
\end{equation*}
\end{theorem}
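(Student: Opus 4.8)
The plan is to deduce \thmref{thm vol cont2} from \thmref{thm res cont edge} in exactly the way \thmref{thm vol cont1} was deduced from \thmref{thm res cont1}, namely by feeding the resistance identity into the polarization formula \eqref{eqn sym2}. Recall that \eqref{eqn sym2} reads $2j_u(t,s)=r(u,t)+r(u,s)-r(t,s)$, and that this relation holds on any metrized graph. The key observation is that it holds simultaneously on $\ga$ (for the function $j_u$) and on $\oga_i$ (for the function $j_u^{\oga_i}$), since $\oga_i$ is itself a metrized graph. Thus I have
\begin{equation*}
2j_u(t,s)=r(u,t)+r(u,s)-r(t,s), \qquad 2j_u^{\oga_i}(t,s)=r_{\oga_i}(u,t)+r_{\oga_i}(u,s)-r_{\oga_i}(t,s).
\end{equation*}

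First I would subtract the second line from the first, obtaining $2\big(j_u(t,s)-j_u^{\oga_i}(t,s)\big)$ as an alternating sum of the three differences $r(u,t)-r_{\oga_i}(u,t)$, $r(u,s)-r_{\oga_i}(u,s)$, and $r(t,s)-r_{\oga_i}(t,s)$. Next I would substitute each of these three differences using the second form of the equality in \thmref{thm res cont edge}, which expresses $r(\cdot,\cdot)-r_{\oga_i}(\cdot,\cdot)$ as $\frac{\li}{\ri(\li+\ri)}$ times a squared difference of the values $j_{\pp}^{\ga-e_i}(\qq,\cdot)$. Writing $a=j_{\pp}^{\ga-e_i}(\qq,u)$, $b=j_{\pp}^{\ga-e_i}(\qq,t)$, $c=j_{\pp}^{\ga-e_i}(\qq,s)$ to keep the bookkeeping clean, the bracket becomes $\frac{\li}{\ri(\li+\ri)}\big[(a-b)^2+(a-c)^2-(b-c)^2\big]$.

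The decisive algebraic step is the identity $(a-b)^2+(a-c)^2-(b-c)^2 = 2(a-b)(a-c)$, which is the same polarization trick already used in passing from the first to the second form of \thmref{thm vol cont1}. Applying it collapses the alternating sum into a single product, and dividing by the factor of $2$ yields
\begin{equation*}
j_u(t,s)-j_u^{\oga_i}(t,s)=\frac{\li}{\ri(\li+\ri)}\big(j_{\pp}^{\ga-e_i}(\qq,u)-j_{\pp}^{\ga-e_i}(\qq,t)\big)\big(j_{\pp}^{\ga-e_i}(\qq,u)-j_{\pp}^{\ga-e_i}(\qq,s)\big),
\end{equation*}
which is precisely the claimed formula.

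I do not anticipate a genuine obstacle here; the result is a formal consequence of \thmref{thm res cont edge} and the polarization identity, so the entire argument is the single substitution-and-simplify computation sketched above, exactly parallel to the one-line proof of \thmref{thm vol cont1}. The only point requiring a moment of care is to confirm that the hypotheses carry over: \thmref{thm res cont edge} requires $e_i$ to be a non-bridge and the relevant points to lie in $\ga-e_i$, and \eqref{eqn sym2} must be invoked on $\oga_i$ as well as on $\ga$. Since $s,t,u\in\ga-e_i$ are all vertices surviving in both $\ga$ and $\oga_i$, and $\oga_i$ is a bona fide metrized graph, \eqref{eqn sym2} is legitimately available there, so the deduction goes through without further conditions.
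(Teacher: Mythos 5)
Your proposal is correct and is exactly the paper's argument: the paper proves \thmref{thm vol cont2} in one line by combining \thmref{thm res cont edge} with \eqref{eqn sym2}, and your write-up simply makes explicit the polarization computation $(a-b)^2+(a-c)^2-(b-c)^2=2(a-b)(a-c)$ that this combination entails. Your remark that \eqref{eqn sym2} applies on $\oga_i$ as well as on $\ga$ is a valid and welcome clarification of a point the paper leaves implicit.
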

\begin{proof}
The proof follows from \thmref{thm res cont edge} and \eqref{eqn sym2}.
\end{proof}
Next, we obtain a new proof of \cite[Corollary 4.2]{C4}:
\begin{theorem}\label{thm res del and cont edge}
Let $e_i$ be an edge of $\ga$ with end points $\pp$ and $\qq$.
If  $e_i$ is not a bridge of the metrized graph $\ga$,  for any $s, \, t \in \ga-e_i$ we have
\begin{equation*}\label{}
\begin{split}
r(s,t) &=\frac{\li}{\li + \ri}r_{\ga-e_i}(s,t)+\frac{\ri}{\li + \ri}r_{\oga_i}(s,t).
\end{split}
\end{equation*}
\end{theorem}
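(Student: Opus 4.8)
The plan is to observe that the two previously established identities, \thmref{thm del1} and \thmref{thm res cont edge}, both express the same quantity $r(s,t)$ in terms of one and the same auxiliary term, namely the squared difference
$$D := \big( j_{\pp}^{\ga-e_i}(\qq,s)-j_{\pp}^{\ga-e_i}(\qq,t) \big)^2.$$
Indeed, the first displayed form of \thmref{thm del1} reads $r(s,t)=r_{\ga-e_i}(s,t)-\frac{1}{\li+\ri}D$, while the second displayed form of \thmref{thm res cont edge} reads $r(s,t)=r_{\oga_i}(s,t)+\frac{\li}{\ri(\li+\ri)}D$. Since both identities are valid precisely when $e_i$ is not a bridge, which is exactly our standing hypothesis, I may use them simultaneously and eliminate the unknown term $D$ between them.

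First I would subtract the two identities to solve for $D$. Equating the two right-hand sides gives $r_{\ga-e_i}(s,t)-r_{\oga_i}(s,t)=\big(\frac{1}{\li+\ri}+\frac{\li}{\ri(\li+\ri)}\big)D$, where the bracketed factor collapses to $\frac{1}{\ri}$, so that $D=\ri\big(r_{\ga-e_i}(s,t)-r_{\oga_i}(s,t)\big)$. Substituting this back into the Cutting Law form and collecting the coefficients of $r_{\ga-e_i}(s,t)$ and $r_{\oga_i}(s,t)$ then yields the claimed combination with weights $\frac{\li}{\li+\ri}$ and $\frac{\ri}{\li+\ri}$.

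Equivalently, and perhaps more transparently, I would look directly for weights $\alpha,\beta$ with $\alpha+\beta=1$ for which the linear combination $\alpha\cdot(\text{Cutting form})+\beta\cdot(\text{Monotonicity form})$ annihilates the coefficient of $D$. The single linear condition $-\frac{\alpha}{\li+\ri}+\frac{\beta\li}{\ri(\li+\ri)}=0$, together with $\alpha+\beta=1$, forces $\alpha=\frac{\li}{\li+\ri}$ and $\beta=\frac{\ri}{\li+\ri}$, and since these are the unique weights killing $D$, the resulting combination is exactly the asserted convex combination of $r_{\ga-e_i}(s,t)$ and $r_{\oga_i}(s,t)$.

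There is essentially no genuine obstacle here: the entire content of the statement is the fortunate fact that the two earlier theorems share the same quadratic term $D$, so that the result is forced as the unique affine combination eliminating it. The only point requiring any care is to confirm that both input identities genuinely apply, which is guaranteed by the assumption that $e_i$ is not a bridge; consequently I do not need to treat a bridge case, as the statement itself is restricted to non-bridges.
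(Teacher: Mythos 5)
Your proposal is correct and takes essentially the same route as the paper: the paper's proof simply multiplies the first identity of \thmref{thm del1} by $\frac{\li}{\li+\ri}$ and the second identity of \thmref{thm res cont edge} by $\frac{\ri}{\li+\ri}$ and adds them, which is precisely the unique affine combination you identify as annihilating the common squared term $D$.
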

\begin{proof}
We multiply both sides of the first equality in \thmref{thm del1} by $\frac{\li}{\li + \ri}$. Then, we multiply both sides of the second equality in \thmref{thm res cont edge} by $\frac{\ri}{\li + \ri}$. Adding these equalities side by side gives the result. 
\end{proof}
The following result is an improved version of \cite[Prop 4.5]{C4}:
\begin{theorem}\label{thm res derivative}
Let $e_i$ be an edge of $\ga$ with end points $\pp$ and $\qq$, and let $\li$ be the length of $e_i$. 
For any $s, \, t \in \ga-e_i$, if $e_i$ is a bridge, 
$\frac{\partial r(s,t)}{\partial L_i}=0$ when $s$ and $t$ are on the same connected component of $\ga-e_i$, and 
$\frac{\partial r(s,t)}{\partial L_i}=1$ when $s$ and $t$ are on distinct connected components of $\ga-e_i$.  

If $e_i$ is not a bridge, for any $s, \, t \in \ga-e_i$ we have
\begin{equation*}\label{}
\begin{split}
\frac{\partial r(s,t)}{\partial L_i}=\frac{1}{(\li+\ri)^2} \big (j_{\pp}^{\ga-e_i}(\qq,s)-j_{\pp}^{\ga-e_i}(\qq,t) \big )^2.
\end{split}
\end{equation*}
\end{theorem}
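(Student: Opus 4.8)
The plan is to read off both claims by differentiating the explicit resistance formulas already proved, once I pin down exactly where the length $\li$ enters each expression.

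For the non-bridge case I would start from the first equality of \thmref{thm del1},
\begin{equation*}
r(s,t)=r_{\ga-e_i}(s,t)-\frac{1}{\li + \ri}\big( j_{\pp}^{\ga-e_i}(\qq,s)- j_{\pp}^{\ga-e_i}(\qq,t)\big)^2 .
\end{equation*}
The decisive observation is that every factor on the right except the explicit $\frac{1}{\li+\ri}$ is computed on the graph $\ga-e_i$, and the edge-length data of $\ga-e_i$ does not contain $\li$, since the interior of $e_i$ has been removed. Thus $r_{\ga-e_i}(s,t)$, the value $\ri=r_{\ga-e_i}(\pp,\qq)$, and the voltages $j_{\pp}^{\ga-e_i}(\qq,s)$, $j_{\pp}^{\ga-e_i}(\qq,t)$ are all constants as functions of $\li$. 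Consequently the only surviving contribution after differentiation comes from $\frac{\partial}{\partial \li}\frac{1}{\li+\ri}=-\frac{1}{(\li+\ri)^2}$, and differentiating the identity above yields the stated formula once the two minus signs cancel.

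For the bridge case I would invoke the last paragraph of \thmref{thm res cont edge}, which already records that $r(s,t)=r_{\oga_i}(s,t)$ when $s$ and $t$ lie in the same connected component of $\ga-e_i$, and $r(s,t)=r_{\oga_i}(s,t)+\li$ otherwise. Since $\oga_i$ is obtained by contracting $e_i$, the term $r_{\oga_i}(s,t)$ is independent of $\li$; differentiating gives $0$ in the first subcase and $1$ in the second, exactly as claimed.

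The computation itself is a single elementary differentiation, so I do not expect a genuine obstacle. The one point I would take care to state explicitly is the $\li$-independence of all quantities attached to $\ga-e_i$ (respectively $\oga_i$), since it is precisely this fact that collapses the derivative to the elementary step above.
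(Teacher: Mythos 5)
Your proposal is correct and follows essentially the same route as the paper: differentiate the first equality of \thmref{thm del1} for the non-bridge case, using the key fact that $r_{\ga-e_i}(s,t)$, $\ri$ and the voltages $j_{\pp}^{\ga-e_i}(\qq,s)$, $j_{\pp}^{\ga-e_i}(\qq,t)$ are independent of $\li$, and invoke the bridge dichotomy from \thmref{thm res cont edge} for the remaining case. Your explicit justification of the $\li$-independence (all data lives on $\ga-e_i$, respectively $\oga_i$) is exactly the observation the paper records, just spelled out in slightly more detail.
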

\begin{proof}
Suppose $e_i$ is not a bridge.

We take derivatives of both sides of the first equality in \thmref{thm del1} or the second equality in  \thmref{thm res cont edge}. Note that $r_{\ga_{p_iq_i}}(s,t)$, $r_{\ga-e_i}(s,t)$, $\ri$, $j_{\pp}^{\ga-e_i}(\qq,s)$ and $j_{\pp}^{\ga-e_i}(\qq,t) $ are independent of $\li$. This completes the proof.

Suppose $e_i$ is a bridge. The result follows from \thmref{thm res cont edge}.
\end{proof}
Next, we give an improved version of \cite[Thm 4.6]{C4}, which is an interesting identity:
\begin{theorem}[Euler's Formula for Resistance Function I]\label{thm res Euler}
Let $e_i$ be an edge of $\ga$ with end points $\pp$ and $\qq$, and let $\li$ be the length of $e_i$. For any $s, \, t \in \vv{\ga}$, we set
$B=\{ e_i \in \ee{\ga} | \text{$s$ and $t$ are disconnected in $\ga-e_i$ } \}$. Then we have
\begin{equation*}\label{}
\begin{split}
r(s,t)&= \sum_{e_i \in B}\li+\sum_{\substack{e_i \in \ee{\ga} \\ \text{$e_i$ is not a bridge}}} \frac{\li}{(\li+\ri)^2} \big (j_{\pp}^{\ga-e_i}(\qq,s)-j_{\pp}^{\ga-e_i}(\qq,t) \big )^2\\
&=\sum_{e_i \in B}\li+\sum_{\substack{e_i \in \ee{\ga} \\ \text{$e_i$ is not a bridge}}} \frac{1}{\li} \big (j_{\pp}(\qq,s)-j_{\pp}(\qq,t) \big )^2.
\end{split}
\end{equation*}
\end{theorem}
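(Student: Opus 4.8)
The plan is to recognize this identity as an instance of Euler's theorem on homogeneous functions. Viewed as a function of the edge lengths $L_1, \dots, L_m$ of $\ga$, the effective resistance $r(s,t)$ is homogeneous of degree one: scaling every resistance by a common factor $\lambda > 0$ scales the effective resistance between any two points by the same factor, so $r(s,t)(\lambda \li, \dots) = \lambda\, r(s,t)(\li, \dots)$. Euler's identity for such a function then reads
\[
r(s,t) = \sum_{i=1}^m \li \, \frac{\partial r(s,t)}{\partial \li}.
\]
First I would establish this homogeneity carefully, since it is the one nontrivial input; it follows from the Kirchhoff-theoretic fact that multiplying all branch resistances by $\lambda$ multiplies all currents by $1/\lambda$ at a fixed voltage drop, hence multiplies the effective resistance by $\lambda$. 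Granting this, the entire sum over edges is reduced to a sum of the partial derivatives already computed in \thmref{thm res derivative}.

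Next I would substitute the three cases of \thmref{thm res derivative} into the Euler sum. The key bookkeeping step is the observation that $B$ consists exactly of the bridges that separate $s$ and $t$: if $e_i$ is not a bridge then $\ga - e_i$ is connected and $s,t$ cannot be disconnected, so $e_i \notin B$; if $e_i$ is a bridge then $e_i \in B$ precisely when $s$ and $t$ lie in distinct components of $\ga - e_i$. Therefore the edges of $\ga$ split into (i) bridges in $B$, where $\partial r(s,t)/\partial \li = 1$, contributing $\sum_{e_i \in B} \li$; (ii) bridges not in $B$, where the derivative is $0$; and (iii) non-bridges, where $\partial r(s,t)/\partial \li = \frac{1}{(\li+\ri)^2}\big(j_{\pp}^{\ga-e_i}(\qq,s) - j_{\pp}^{\ga-e_i}(\qq,t)\big)^2$. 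Collecting these contributions yields the first displayed equality.

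Finally, for the second equality I would convert the deleted-graph voltages to voltages on $\ga$ itself using \eqref{eqn del1}, namely $j_{\pp}(\qq,s) = \frac{\li}{\li+\ri} j_{\pp}^{\ga-e_i}(\qq,s)$ and the analogous relation for $t$. Substituting $j_{\pp}^{\ga-e_i}(\qq,s) - j_{\pp}^{\ga-e_i}(\qq,t) = \frac{\li+\ri}{\li}\big(j_{\pp}(\qq,s) - j_{\pp}(\qq,t)\big)$ into each non-bridge term cancels the factor $(\li+\ri)^2$ and turns $\frac{\li}{(\li+\ri)^2}$ into $\frac{1}{\li}$, giving the second form. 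The main obstacle is the first step—justifying that $r(s,t)$ is genuinely a differentiable, degree-one homogeneous function of the lengths, so that Euler's identity applies—whereas the remaining steps are a direct substitution from \thmref{thm res derivative} together with a routine algebraic simplification.
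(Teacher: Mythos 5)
Your proposal is correct and follows essentially the same route as the paper's own (first) proof: apply Euler's identity for the degree-one homogeneous function $r(s,t)$ in the edge lengths, substitute the partial derivatives from \thmref{thm res derivative} (splitting edges into bridges in $B$, bridges not in $B$, and non-bridges), and convert between $j_{\pp}^{\ga-e_i}$ and $j_{\pp}$ via \eqref{eqn del1}. Your extra care in justifying homogeneity is a reasonable elaboration of what the paper asserts with a citation, so there is nothing to correct; the paper additionally offers an independent second proof via Green's identity on metrized graphs, which you did not need.
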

\begin{proof}
Since $r(s,t)$ is a continuously differentiable homogeneous function of degree one in the variables $L_1, \, L_2, \, \ldots, 
\, L_m$. Then Euler's formula for such a function is as follows (\cite{C4}):
\begin{equation}\label{eqn res Euler}
\begin{split}
r(s,t)=\sum_{e_i \in \ee{\ga}}\li \frac{\partial r(s,t)}{\partial L_i}.
\end{split}
\end{equation}
Then the result follows from \thmref{thm res derivative} and \eqref{eqn del1}.
\end{proof}
We give a second proof of \thmref{thm res Euler} via harmonic analysis on metrized graphs:
\begin{equation}\label{eqn rpq}
\begin{split}
\int_{\ga} \big( \ddx j_s(t,x) \big )^2 dx&=\int_{\ga}j_s(t,x) \Delta_x j_s(t,x), \quad \text{ by Green's identity \cite{Zh}}\\
&= \int_{\ga}j_s(t,x) \big ( \delta_t(x)-\delta_s(x)  \big ), \quad \text{as $\Delta_x j_s(t,x)=\delta_t(x)-\delta_s(x)$}.\\
&= j_s(t,t) -j_s(t,s)\\
&= r(s,t), \quad \text{by \eqref{eqn sym}}.
\end{split}
\end{equation}
A generalized version of \eqref{eqn rpq} can be found in \cite[Thm 3.19]{C0}. 

On the other hand, $\int_{\ga} \big( \ddx j_s(t,x) \big )^2 dx = \sum_{e_i \in \ee{\ga}} \int_{0}^{\li} \big( \ddx j_s(t,x) \big )^2 dx$.

\begin{figure}
\centering
\includegraphics[scale=0.53]{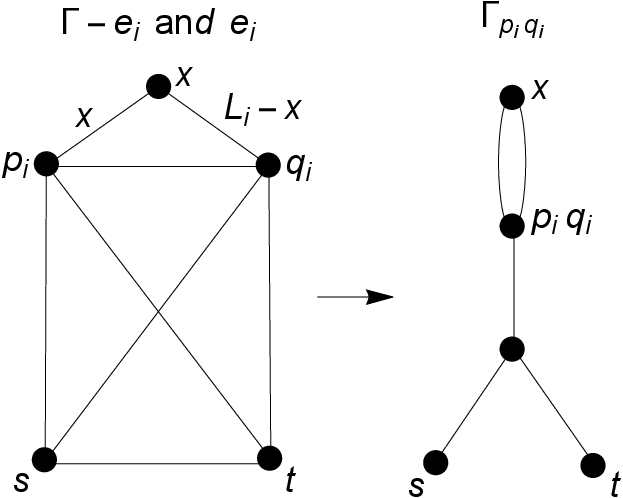} \caption{The graph $\Gamma-e_i$ after circuit reductions until four vertices and $e_i$, and obtaining $\oga_i$.} \label{fig ga3}
\end{figure}

If $e_i$ is not a bridge and $x \in e_i$, we use \thmref{thm vol cont2} to derive
\begin{equation*}\label{}
\begin{split}
j_{s}(t,x) &=j_{s}^{\oga_i}(t,x) +\frac{1}{r(\pp,\qq)} \big( j_{\pp}(\qq,s)-j_{\pp}(\qq,t) \big) \big( j_{\pp}(\qq,s)-j_{\pp}(\qq,x) \big).
\end{split}
\end{equation*}
Thus,
\begin{equation*}\label{}
\begin{split}
\ddx j_{s}(t,x) &=\ddx j_{s}^{\oga_i}(t,x) -\frac{1}{r(\pp,\qq)} \big( j_{\pp}(\qq,s)-j_{\pp}(\qq,t) \big) \ddx j_{\pp}(\qq,x).
\end{split}
\end{equation*}
We have $\ddx j_{s}^{\oga_i}(t,x) =0$, which can be realized if we first consider the circuit reduction of $\ga-e_i$ while keeping $s$, $t$, $\pp$, $\qq$ and $e_i$, and then identify $\pp$ and $\qq$. This is illustrated in \figref{fig ga3}. 

We also have $ j_{\pp}(\qq,x) = \frac{x \ri}{\li +\ri}$ and so $ \ddx j_{\pp}(\qq,x) = \frac{ \ri}{\li +\ri}$. 
This can be obtained if we first consider the circuit reduction of $\ga-e_i$ while keeping $\pp$, $\qq$ and $e_i$, and then applying Delta-Y transformation. This is illustrated in \figref{fig ga4}. 
\begin{figure}
\centering
\includegraphics[scale=0.53]{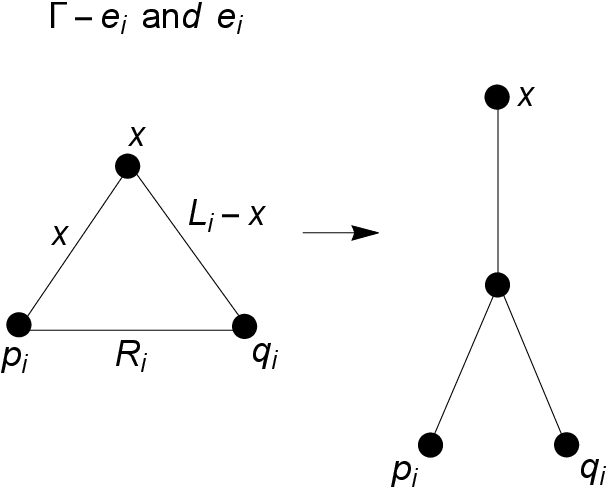} \caption{The graph $\Gamma-e_i$ after circuit reductions until two vertices and $e_i$, and applying Delta-Y transformation.} \label{fig ga4}
\end{figure}

Thus, we have 
\begin{equation*}\label{}
\begin{split}
\ddx j_{s}(t,x) &= \frac{-1}{\li} \big( j_{\pp}(\qq,s)-j_{\pp}(\qq,t) \big).
\end{split}
\end{equation*}
This gives $\int_{0}^{\li} \big( \ddx j_s(t,x) \big )^2 dx = \frac{1}{\li} \big( j_{\pp}(\qq,s)-j_{\pp}(\qq,t) \big)^2$.

If $e_i$ is a bridge, we have two cases. When $s$ and $t$ are on the same connected components of $\ga-e_i$, $j_s(t,x)=j_s(t,\pp)$ or $j_s(t,x)=j_s(t,\qq)$, and so $\ddx j_s(t,x) =0$. When $s$ and $t$ are on distinct connected components of $\ga-e_i$, 
$j_s(t,x)=r(\pp,s)+x$ and so $\ddx j_s(t,x) =1$. Therefore, $\int_{0}^{\li} \big( \ddx j_s(t,x) \big )^2 dx$ is  $\li$ or $0$.

Hence, we have 
\begin{equation}\label{eqn rpq2}
\begin{split}
\int_{\ga} \big( \ddx j_s(t,x) \big )^2 dx= 
\sum_{e_i \in B}\li+\sum_{\substack{e_i \in \ee{\ga} \\ \text{$e_i$ is not a bridge}}} \frac{1}{\li} \big (j_{\pp}(\qq,s)-j_{\pp}(\qq,t) \big )^2.
\end{split}
\end{equation}
Thus the second proof of \thmref{thm res Euler} follows from \eqref{eqn rpq} and \eqref{eqn rpq2}. 
\begin{remark}
We can use \thmref{thm magic} to state the equality in \thmref{thm res Euler} in different forms.
\end{remark}
We can alternatively state \thmref{thm res Euler} in the following form:
\begin{theorem}[Euler's Formula for Resistance Function II]\label{thm res Euler second}
Let $e_i$ be an edge of $\ga$ with end points $\pp$ and $\qq$, and let $\li$ be the length of $e_i$. For any $s, \, t \in \vv{\ga}$, we have
\begin{equation*}\label{}
\begin{split}
r(s,t)=\frac{1}{4}\sum_{e_i \in \ee{\ga}} \frac{1}{\li} \Big[ r(\pp,s)-r(\qq,s)-r(\pp,t)+r(\qq,t) \Big]^2.
\end{split}
\end{equation*}
\end{theorem}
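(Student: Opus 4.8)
The plan is to derive \thmref{thm res Euler second} directly from the already-established \thmref{thm res Euler} by converting the voltage differences appearing there into resistance differences. The link between the two statements is the symmetric relation \eqref{eqn sym2}, namely $2j_{\pp}(\qq,s)=r(\pp,\qq)+r(\pp,s)-r(\qq,s)$. Writing this identity once with $s$ and once with $t$ and subtracting, the common term $r(\pp,\qq)$ cancels and I obtain
$$2\big(j_{\pp}(\qq,s)-j_{\pp}(\qq,t)\big)=r(\pp,s)-r(\qq,s)-r(\pp,t)+r(\qq,t),$$
so that $\big(j_{\pp}(\qq,s)-j_{\pp}(\qq,t)\big)^2=\tfrac14\big[r(\pp,s)-r(\qq,s)-r(\pp,t)+r(\qq,t)\big]^2$. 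This is exactly the replacement that turns each summand of \thmref{thm res Euler} into the corresponding summand of the target formula.

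First I would treat the non-bridge edges. For each such $e_i$, the displayed identity shows that the term $\frac{1}{\li}\big(j_{\pp}(\qq,s)-j_{\pp}(\qq,t)\big)^2$ in the second form of \thmref{thm res Euler} equals $\frac{1}{4\li}\big[r(\pp,s)-r(\qq,s)-r(\pp,t)+r(\qq,t)\big]^2$, which matches precisely the non-bridge contributions to the sum in \thmref{thm res Euler second}.

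The remaining, and genuinely substantive, step is to show that the bridge edges also fit this single uniform sum, even though they were segregated into the $\sum_{e_i\in B}\li$ term in \thmref{thm res Euler}. Here I would use that a bridge $e_i$ is the only path joining its endpoints, so $r(\pp,\qq)=\li$ and resistances add across it. If $s$ and $t$ lie in the same component of $\ga-e_i$ (say both on the $\pp$ side), then $r(\qq,s)=\li+r(\pp,s)$ and $r(\qq,t)=\li+r(\pp,t)$, whence the bracket $r(\pp,s)-r(\qq,s)-r(\pp,t)+r(\qq,t)$ vanishes; such edges contribute $0$ to the sum in \thmref{thm res Euler second}, matching the fact that they lie neither in $B$ nor among the non-bridges. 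If instead $e_i\in B$, with $s$ on the $\pp$ side and $t$ on the $\qq$ side, then $r(\qq,s)=\li+r(\pp,s)$ and $r(\pp,t)=\li+r(\qq,t)$, so the bracket equals $-2\li$ and the term becomes $\frac{1}{4\li}(2\li)^2=\li$, exactly reproducing the summand $\li$ for $e_i\in B$.

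Assembling the three cases, the single sum $\frac14\sum_{e_i\in\ee{\ga}}\frac{1}{\li}\big[r(\pp,s)-r(\qq,s)-r(\pp,t)+r(\qq,t)\big]^2$ reproduces term by term the right-hand side of \thmref{thm res Euler}, which equals $r(s,t)$; this yields the claim. I expect the bridge bookkeeping to be the only delicate point, since the non-bridge part is a purely formal substitution via \eqref{eqn sym2}.
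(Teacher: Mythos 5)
Your proof is correct and takes essentially the same route as the paper: both use \eqref{eqn sym2} to rewrite the voltage difference in \thmref{thm res Euler} as $\frac{1}{2}\big[r(\pp,s)-r(\qq,s)-r(\pp,t)+r(\qq,t)\big]$ and then check that each bridge in $B$ contributes exactly $\li$ to the uniform sum. Your explicit verification that bridges with $s$ and $t$ in the same component of $\ga-e_i$ make the bracket vanish is left implicit in the paper, but it is the same argument, not a different one.
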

\begin{proof}
We have $j_{\pp}(\qq,s)-j_{\pp}(\qq,t) = \frac{1}{2}\big( r(\pp,s)-r(\qq,s)-r(\pp,t)+r(\qq,t) \big)$ by \eqref{eqn sym2}.
On the other hand, if $e_i$ is a bridge of length $\li$ such that $s$ and $t$ are disconnected in $\ga-e_i$, then
$\big( j_{\pp}(\qq,s)-j_{\pp}(\qq,t) \big)^2 = \frac{1}{4}\big( r(\pp,s)-r(\qq,s)-r(\pp,t)+r(\qq,t) \big)^2=\li^2$.
Then the result follows from \thmref{thm res Euler}.
\end{proof}
Next, we show how resistance value change  explicitly by modifying one of the edge length in $\ga$.
\begin{theorem}[Explicit Rayleigh's Monotonicity Law II]\label{thm res edge mod}
Let $\ga'$ be the metrized graph obtained from $\ga$ by replacing an edge $e_i \in \ee{\ga}$ by $e_i'$ of length $\li'$, and let $\li$ be the length of $e_i$ with end points $\pp$ and $\qq$. Then for any $s$ and $t$ in $\ga-e_i$, if $e_i$ is not a bridge, we have
\begin{equation*}\label{}
\begin{split}
r(s,t)=r_{\ga'}(s,t)+\frac{\li-\li'}{(\li+\ri)(\li'+\ri)} \big (j_{\pp}^{\ga-e_i}(\qq,s) - j_{\pp}^{\ga-e_i}(\qq,t) \big )^2.
\end{split}
\end{equation*}
If $e_i$ is a bridge, $r(s,t)=r_{\ga'}(s,t)$ when $s$ and $t$ are on the same connected component of $\ga-e_i$. Otherwise, 
$r(s,t)=r_{\ga'}(s,t)+\li-\li'$.

In particular, if $e_i$ is not a bridge and $\li> \li'$, then $r(s,t)  \geq r_{\ga'}(s,t)$, and $r(s,t) = r_{\ga'}(s,t)$ if and only if  $j_{\pp}^{\ga-e_i}(\qq,s) = j_{\pp}^{\ga-e_i}(\qq,t)$.
If $e_i$ is a bridge and $\li> \li'$, then $r(s,t)  \geq r_{\ga'}(s,t)$, and $r(s,t) = r_{\ga'}(s,t)$ if and only if $s$ and $t$ are on the same connected component of $\ga-e_i$.
\end{theorem}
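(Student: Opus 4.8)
The plan is to apply the Explicit Rayleigh's Cutting Law (\thmref{thm del1}) twice --- once to $\ga$ and once to $\ga'$ --- and to exploit the fact that deleting the interior of $e_i$ from $\ga$ and deleting the interior of $e_i'$ from $\ga'$ produce the \emph{same} metrized graph $\ga-e_i$. Concretely, I would first record that, since $\ga$ is $\ga-e_i$ together with an edge of length $\li$ joining $\pp$ and $\qq$, \thmref{thm del1} gives
\begin{equation*}
r(s,t) = r_{\ga-e_i}(s,t) - \frac{1}{\li + \ri}\big(j_{\pp}^{\ga-e_i}(\qq,s) - j_{\pp}^{\ga-e_i}(\qq,t)\big)^2.
\end{equation*}
Because $\ga'-e_i' = \ga-e_i$, the quantities $r_{\ga-e_i}(s,t)$, the value $\ri = r_{\ga-e_i}(\pp,\qq)$, and the boundary voltages $j_{\pp}^{\ga-e_i}(\qq,s)$ and $j_{\pp}^{\ga-e_i}(\qq,t)$ are all unchanged when the added edge has length $\li'$; moreover $e_i$ is a bridge of $\ga$ exactly when $e_i'$ is a bridge of $\ga'$, so the case split is consistent. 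Hence applying \thmref{thm del1} to $\ga'$ yields the same formula with $\li$ replaced by $\li'$.

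Next I would subtract the two identities. Every term on the right cancels except the two coefficients multiplying the common squared voltage difference, leaving
\begin{equation*}
r(s,t) - r_{\ga'}(s,t) = \left(\frac{1}{\li'+\ri} - \frac{1}{\li+\ri}\right)\big(j_{\pp}^{\ga-e_i}(\qq,s) - j_{\pp}^{\ga-e_i}(\qq,t)\big)^2.
\end{equation*}
A one-line common-denominator simplification rewrites the difference of reciprocals as $\tfrac{\li-\li'}{(\li+\ri)(\li'+\ri)}$, which is precisely the stated coefficient; this settles the non-bridge case.

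For the bridge case I would argue exactly as in the bridge portion of \thmref{thm res cont edge}. If $s$ and $t$ lie in the same connected component of $\ga-e_i$, then $r(s,t)$ does not see $e_i$ at all, so it is independent of the edge length and $r(s,t)=r_{\ga'}(s,t)$. If $s$ and $t$ lie in distinct components (say $s$ on the side of $\pp$), the current is forced across the bridge, so $r(s,t)=r(s,\pp)+\li+r(\qq,t)$ and $r_{\ga'}(s,t)=r(s,\pp)+\li'+r(\qq,t)$ with $r(s,\pp)$ and $r(\qq,t)$ computed in $\ga-e_i$; subtracting gives $r(s,t)=r_{\ga'}(s,t)+\li-\li'$. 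The monotonicity claims are then immediate: when $\li>\li'$ the factor $\li-\li'$ is positive, the product $(\li+\ri)(\li'+\ri)$ is positive, and the squared term is nonnegative, so $r(s,t)\ge r_{\ga'}(s,t)$, with equality exactly when the squared voltage difference vanishes in the non-bridge case, and exactly when $s,t$ share a component in the bridge case.

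The only point requiring genuine care --- and the step I would emphasize --- is the identification $\ga'-e_i'=\ga-e_i$ together with the resulting invariance of $\ri$ and of the boundary voltages $j_{\pp}^{\ga-e_i}(\qq,\cdot)$ under the length change. Once this shared-deletion-graph observation is in place, the result collapses to the trivial algebraic difference of the two Cutting-Law identities, and there is no substantive analytic obstacle remaining.
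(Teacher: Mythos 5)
Your proof is correct, and it is a twin of the paper's argument rather than a copy of it: the paper proves the non-bridge case by applying \thmref{thm res cont edge} to both $\ga$ and $\ga'$, anchoring on the common \emph{contraction} graph $\oga_i=\overline{\ga'_i}$ and subtracting, so the coefficient arises as $\frac{\li}{\ri(\li+\ri)}-\frac{\li'}{\ri(\li'+\ri)}=\frac{\li-\li'}{(\li+\ri)(\li'+\ri)}$; you instead apply \thmref{thm del1} to both graphs, anchoring on the common \emph{deletion} graph $\ga-e_i=\ga'-e_i'$, so the coefficient arises as the difference of reciprocals $\frac{1}{\li'+\ri}-\frac{1}{\li+\ri}$. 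The proof skeleton (shared auxiliary graph, invariance of $\ri$ and of the boundary voltages $j_{\pp}^{\ga-e_i}(\qq,\cdot)$ under the length change, then a one-line subtraction) is identical, and by \thmref{thm res del and cont edge} the two routes are linked anyway, so neither buys more generality in substance. Your variant does have two small advantages: the algebra is marginally cleaner, and your coefficients never place $\ri$ in a denominator, so the degenerate case $\ri=0$ (e.g.\ $e_i$ a self-loop, where the voltage differences vanish and both sides reduce to $r(s,t)=r_{\ga'}(s,t)$) passes through without the formal division-by-zero that appears in the paper's intermediate identities from \thmref{thm res cont edge}. Your bridge-case analysis and the monotonicity conclusions for $\li>\li'$ match the paper's exactly.
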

\begin{proof}
By applying \thmref{thm res cont edge} to both of $\ga$ and $\ga'$, we obtain the following equalities as $\ga-e_i=\ga'-e_i'$ and $\oga_i=\overline{\ga'_i}$:
\begin{equation*}\label{}
\begin{split}
r(s,t) &=r_{\oga_i}(s,t)+\frac{\li}{\ri(\li + \ri)} \big [ j_{\pp}^{\ga-e_i}(\qq,s) - j_{\pp}^{\ga-e_i}(\qq,t) \big ]^2.\\
r_{\ga'}(s,t)&=r_{\oga_i}(s,t)+\frac{\li'}{\ri(\li' + \ri)} \big [ j_{\pp}^{\ga-e_i}(\qq,s) - j_{\pp}^{\ga-e_i}(\qq,t) \big ]^2.
\end{split}
\end{equation*}
Then the result follows by subtracting the second equation from the first one.

If $e_i$ is a bridge of $\ga$, $r(s,t) =r_{\oga_i}(s,t)=r_{\ga'}(s,t)$ when $s$ and $t$ are on the same connected component of $\ga-e_i$. Otherwise,  $r(s,t) =r_{\oga_i}(s,t)+\li$ and $r_{\ga'}(s,t) =r_{\oga_i}(s,t)+\li'$. 

This completes the proof.
\end{proof}

%
%
%
\section{Spanning Trees and Two Vertex Identification}\label{sec spanning1}

In this section, we consider a connected graph $G$ that have possibly multiple edges or loops. We assume that the edge length $\li$ is $1$ for each edge $e_i$ in the graph.  $G$ contains $n$ vertices and $m$ edges so that $g=m-n+1$ is the genus (i.e., the cyclomatic number) of the graph.

Let $t(G)$ denote the number of spanning trees of a graph $G$.

We recall the following deletion-contraction identity of $t(G)$:
\begin{theorem}\label{thm spantree cont-del}\cite[Prop 4.9]{BM}
For any non-bridge edge $e_i \in \ee{G}$, we have
\begin{eqnarray*}
t(G)=t(\overline{G}_i)+t(G-e_i).
\end{eqnarray*}
\end{theorem}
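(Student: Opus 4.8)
The plan is to prove the identity combinatorially, by partitioning the spanning trees of $G$ according to whether or not they contain the edge $e_i$. Since every spanning tree of $G$ either uses $e_i$ or avoids it, $t(G)$ splits as the sum of two counts, and I would identify each count with one of the two terms on the right-hand side.

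First I would count the spanning trees of $G$ that do \emph{not} contain $e_i$. Such a tree uses only edges of $G-e_i$ and spans the same vertex set, so it is precisely a spanning tree of $G-e_i$; conversely, every spanning tree of $G-e_i$ is a spanning tree of $G$ avoiding $e_i$. (Here the hypothesis that $e_i$ is not a bridge is exactly what guarantees $G-e_i$ is connected, so that $G-e_i$ and the count $t(G-e_i)$ are defined in the author's framework.) This correspondence is a bijection, so the number of such trees equals $t(G-e_i)$.

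Next I would count the spanning trees containing $e_i$ by contraction. The map $T \mapsto T/e_i$ sends a spanning tree $T$ of $G$ through $e_i$ to a subgraph of $\overline{G}_i$, and I would verify that $T/e_i$ is connected, acyclic, and spanning, hence a spanning tree of $\overline{G}_i$; its inverse re-adjoins $e_i$, sending a spanning tree $S$ of $\overline{G}_i$ to the subgraph of $G$ consisting of the lifts of the edges of $S$ together with $e_i$. Adjoining $e_i$ reconnects the two preimages $\pp$ and $\qq$ of the identified vertex without creating a cycle, so the result is a spanning tree of $G$ containing $e_i$. This bijection shows that the number of spanning trees through $e_i$ equals $t(\overline{G}_i)$, and summing the two counts yields $t(G)=t(\overline{G}_i)+t(G-e_i)$.

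The step requiring the most care is checking that $T \mapsto T/e_i$ is a genuine bijection onto the spanning trees of $\overline{G}_i$ --- in particular, that contraction turns a tree into a tree (never creating a cycle) and that distinct trees through $e_i$ have distinct contractions, so that nothing is over- or under-counted. An alternative proof, closer in spirit to the rest of this paper, would instead invoke the relationship between effective resistance and spanning-tree counts together with the parallel-resistance identity $r(\pp,\qq)=\frac{\li \ri}{\li+\ri}$ appearing in the proof of \thmref{thm res cont edge}: writing both $r(\pp,\qq)$ and $\ri=r_{\ga-e_i}(\pp,\qq)$ as ratios of spanning-tree numbers of $G$, $\overline{G}_i$, and $G-e_i$ (using $(G-e_i)_{p_iq_i}=\overline{G}_i$), substituting into that identity, and clearing denominators recovers $t(G)=t(\overline{G}_i)+t(G-e_i)$ at once.
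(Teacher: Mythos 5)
Your combinatorial argument is correct, but note that the paper does not prove this statement at all: it is quoted verbatim from the cited reference \cite[Prop 4.9]{BM}, so what you have written is essentially the standard textbook proof that the citation points to. The partition of spanning trees according to whether they contain $e_i$, the identification of the $e_i$-avoiding trees with spanning trees of $G-e_i$, and the contraction bijection $T \mapsto T/e_i$ for the trees through $e_i$ are exactly the classical route, and your verification points (contraction of a tree edge preserves tree-ness, re-adjoining $e_i$ is the inverse) are the right ones. One caveat deserves emphasis: your step ``adjoining $e_i$ reconnects the two preimages $\pp$ and $\qq$ \ldots without creating a cycle'' silently assumes $\pp \neq \qq$, i.e., that $e_i$ is not a self-loop. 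A loop is never a bridge, so the theorem as stated admits loops, yet for a loop the identity fails ($t(G)=t(G-e_i)$ while $t(\overline{G}_i)=t(G-e_i)$ as well); the cited source states the result for links (non-loop edges), and your bijection is valid precisely in that case, so you should make that hypothesis explicit. Also, the hypothesis that $e_i$ is not a bridge is not actually needed for your bijective argument if one adopts the convention $t(G-e_i)=0$ for disconnected $G-e_i$; it only matters for interpreting $t(G-e_i)$ as a genuine spanning-tree count. Finally, your proposed alternative proof via effective resistances would be circular inside this paper's logical structure: the paper derives the general case of \thmref{thm res and spantree}, as well as the identities \eqref{eqn ri and tgi} expressing $\ri$ and $r(\pp,\qq)$ through spanning-tree counts, \emph{from} \thmref{thm spantree cont-del}, so those relations cannot be used to establish it. The direct combinatorial proof is therefore the right choice here.
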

We recall the following basic well-known cases:
\begin{theorem}\label{thm spantree examples}
Let $P_s$ and $C_s$ denote the path and cycle graphs on $s \geq 2$ vertices, respectively; and let $B_s$ denote the banana graph (or dipole graph) with $s \geq 1$ edges (see \figref{fig pcb}). Then we have
\begin{eqnarray*}
t(P_s)=1, \quad t(C_s)=s \quad \text{and} \quad t(B_s)=s.
\end{eqnarray*}
\end{theorem}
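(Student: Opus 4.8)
The plan is to establish each of the three formulas $t(P_s)=1$, $t(C_s)=s$, and $t(B_s)=s$ separately, using the deletion-contraction recursion of \thmref{thm spantree cont-del} together with elementary base cases. These are standard facts, so the proof should be short and combinatorial, and I would not invoke any of the resistance machinery from the earlier sections.

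For the path graph $P_s$, I would argue directly: a path is a tree, so it has no proper spanning subtree other than itself, and being connected and acyclic it \emph{is} its own unique spanning tree; hence $t(P_s)=1$. For the cycle $C_s$, the cleanest approach is the observation that a spanning tree of a cycle is obtained precisely by deleting exactly one of its $s$ edges --- deleting any single edge leaves a path on $s$ vertices, which is a spanning tree, and deleting none leaves the cycle itself (not a tree), while deleting two or more disconnects the graph. Since there are exactly $s$ edges to choose from, $t(C_s)=s$. For the banana graph $B_s$, which consists of two vertices joined by $s$ parallel edges, a spanning tree must use exactly one of the $s$ edges to connect the two vertices (using two or more would create a cycle), giving again $s$ choices, so $t(B_s)=s$.

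Alternatively, and more in the spirit of the surrounding development, I could derive the cycle and banana formulas inductively from \thmref{thm spantree cont-del}. Picking any non-bridge edge $e_i$ of $C_s$, contraction yields $\overline{C_s}_i = C_{s-1}$ while deletion yields $C_s - e_i = P_s$, so $t(C_s)=t(C_{s-1})+1$; combined with the base case $t(C_2)=2$ (two parallel edges between two vertices) this gives $t(C_s)=s$ by induction. The banana graph admits the dual recursion: contracting one edge of $B_s$ collapses the two vertices and produces $s-1$ self-loops on a single vertex (contributing $1$ spanning tree), while $B_s - e_i = B_{s-1}$, yielding $t(B_s)=t(B_{s-1})+1$ with base case $t(B_1)=1$.

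The only point requiring care --- and the mildest possible obstacle --- is the treatment of the degenerate and small cases, so that the recursions and the direct counting arguments are genuinely valid for all $s$ in the stated ranges. In particular one must confirm that $C_2$ and $B_2$ (a pair of parallel edges) are handled consistently under the convention that multiple edges are allowed, and that $B_1$ (a single edge, equivalently $P_2$) and $C_2$ give the correct base values; once these are pinned down the inductions close immediately and the direct counts are unambiguous.
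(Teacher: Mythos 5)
Your proof is correct, but there is nothing in the paper to compare it against: the author states this theorem with the preface ``We recall the following basic well-known cases'' and supplies no proof at all. Your direct counting arguments are the standard ones and are complete --- a path is a tree and hence its own unique spanning tree; a spanning tree of $C_s$ is precisely the complement of a single edge, giving $s$ choices; and a spanning tree of $B_s$ consists of exactly one of the $s$ parallel edges, since two or more would create a cycle and zero would disconnect. Your inductive alternative via \thmref{thm spantree cont-del} is also sound, and your caution about small cases touches the one genuinely delicate point: contracting an edge of $B_s$ produces a single vertex carrying $s-1$ self-loops, and to conclude $t(\overline{B_s}_i)=1$ you need either the paper's convention $t(G):=1$ for a one-vertex graph (which the paper only introduces \emph{after} this theorem, following \thmref{thm res and spantree}) or the direct observation that self-loops belong to no spanning tree. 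Likewise $t(C_2)=2$ and $t(B_1)=1$ check out under the multigraph conventions, and \thmref{thm spantree cont-del} is correctly applied only to non-bridge edges, with the bridge cases ($P_s$, $B_1$) handled as base cases rather than by recursion. In short, the blind attempt not only matches what the author implicitly relies on but actually fills a gap the paper leaves to the reader.
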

\begin{figure}
\centering
\includegraphics[scale=0.55]{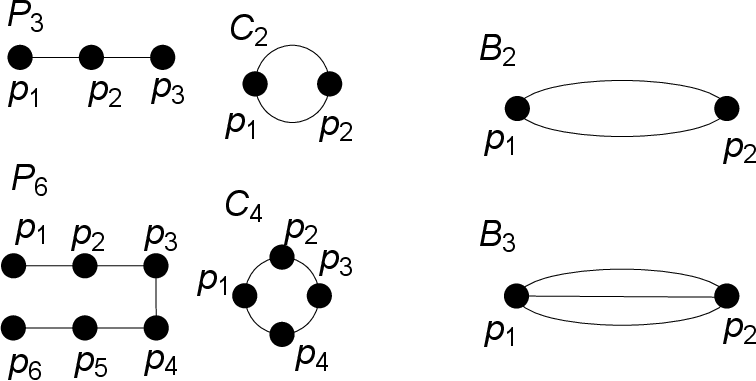} \caption{The graphs $P_3$, $P_6$, $C_2$, $G_4$, $B_2$ and $B_3$.}\label{fig pcb}
\end{figure}

It is well known that the resistance values can be computed via the number of spanning trees:
\begin{theorem}\label{thm res and spantree}
Let $G$ be a connected graph with at least $2$ vertices, and let $p, \, q \in \vv{G}$. Then
\begin{eqnarray*}
r(p,q)=\frac{t(G_{pq})}{t(G)}.
\end{eqnarray*}
\end{theorem}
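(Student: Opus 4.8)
The plan is to prove the identity $r(p,q) = t(G_{pq})/t(G)$ using the Matrix-Tree Theorem, which expresses the number of spanning trees as a cofactor of the discrete Laplacian. My approach rests on the fact, recorded in \lemref{lem disc}, that resistance values are encoded in the Moore-Penrose pseudoinverse $\plm$ of the Laplacian $\lm$. The strategy is to connect two classical formulas: on one side, the spanning-tree count via the Matrix-Tree Theorem; on the other, the resistance via minors of the Laplacian. The cleanest route is to use the all-minors version of the Matrix-Tree Theorem (the principal minor interpretation), so that both $t(G)$ and $t(G_{pq})$ become determinants of reduced Laplacians, and the ratio collapses to exactly the effective resistance.

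First I would fix an arbitrary vertex $v_0$ and recall the Matrix-Tree Theorem in the form $t(G) = \det(\lm_0)$, where $\lm_0$ is obtained by deleting the row and column indexed by $v_0$; more generally the number of spanning forests with prescribed components equals the corresponding minor. The key observation is that contracting (identifying) $p$ and $q$ to form $G_{pq}$ corresponds, at the level of Laplacians, to merging the $p$ and $q$ rows and the $p$ and $q$ columns. I would then express $t(G_{pq})$ as a minor of $\lm$: concretely, $t(G_{pq})$ counts spanning trees of $G_{pq}$, which are in bijection with spanning forests of $G$ having exactly two components, one containing $p$ and one containing $q$, and this count is the principal minor of $\lm$ obtained by deleting both the $p$-th and $q$-th rows and columns (after a standard normalization).

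Next I would relate both determinants to the effective resistance. Using the identity $r(p,q) = \det(\lm \text{ with rows/columns } p,q \text{ deleted}) / \det(\lm \text{ with row/column } p \text{ deleted})$ — the classical cofactor formula for effective resistance — the right-hand side becomes precisely $t(G_{pq})/t(G)$ after substituting the Matrix-Tree expressions for numerator and denominator. Alternatively, staying entirely within the pseudoinverse framework, I would invoke \lemref{lem disc} to write $r(p,q) = l_{pp}^+ - 2 l_{pq}^+ + l_{qq}^+$ and then translate each entry of $\plm$ into a ratio of spanning-tree-weighted sums via the cofactor description of $\plm$, collecting terms to recognize $t(G_{pq})$ in the numerator.

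The main obstacle is the bookkeeping in the bijective (or algebraic) step that identifies $t(G_{pq})$ with the appropriate Laplacian minor, since one must carefully account for how edges between $p$ and $q$ (including multiple edges) behave under identification and how self-loops created by the identification do not contribute to spanning trees. I expect the cleanest presentation to avoid the bijection entirely and instead argue purely through determinant identities: express $t(G)$ and $t(G_{pq})$ as the two relevant minors of $\lm$, and then cite the classical fact that their ratio equals the effective resistance $r(p,q)$. This reduces the proof to a short algebraic verification combining the Matrix-Tree Theorem with the cofactor formula for resistance, with no continuous or metrized-graph machinery required.
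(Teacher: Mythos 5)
Your proposal is correct, but it follows a genuinely different route from the paper. The paper does \emph{not} argue through determinants at all: it takes the adjacent-vertex case as known (citing Thomassen) and bootstraps to arbitrary $p,q$ by adding an auxiliary unit edge $e$ between $p$ and $q$ to form $\tilde{G}$, then combining the parallel-reduction formula $r_{\tilde{G}}(p,q)=\frac{r(p,q)}{1+r(p,q)}$ with the deletion--contraction identity $t(\tilde{G})=t(\tilde{G}_{pq})+t(\tilde{G}-e)$, the observations $t(\tilde{G}_{pq})=t(G_{pq})$ (the new edge becomes a self-loop) and $\tilde{G}-e=G$, and solving the resulting equation. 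Your route is the classical direct one: the Matrix--Tree Theorem for $t(G)$, the all-minors version identifying $t(G_{pq})$ with the principal minor of $\lm$ obtained by deleting rows and columns $p$ and $q$ (via the bijection with spanning $2$-forests of $G$ separating $p$ and $q$, which you correctly flag, including the point that edges between $p$ and $q$ become loops and cannot occur in such forests), and the cofactor formula $r(p,q)=\det \lm(p,q\,|\,p,q)/\det \lm(p\,|\,p)$. This is exactly the "indirect" literature route the paper explicitly comments on before giving its reduction, and it is valid for multigraphs with unit edge weights. One caution so your argument is not circular: the cofactor formula for resistance must be established independently of spanning-tree counts, e.g.\ by solving the grounded system $\lm v = \chi_q - \chi_p$ with Cramer's rule; with that done, your proof is complete and self-contained, whereas the paper's proof is shorter and more elementary but rests on the adjacent-vertex case as an external input. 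A side benefit of the paper's reduction is that it generalizes immediately to weighted/metrized settings where one has the adjacent case, while your determinant argument buys a uniform proof with no case distinction between adjacent and non-adjacent vertices.
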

\thmref{thm res and spantree} was proved by C. Thomassen in 1990 \cite[Proposition 2.3]{T} for any adjacent vertices $p$ and $q$ in a connected graph $G$. In many papers \cite{Kir} and \cite[Section 17]{Big} are cited for the general case, but we think this is somewhat indirectly given. For the convenience of readers, we show that the proof of the general case can be obtained if \thmref{thm res and spantree} is assumed for adjacent vertices.

For any given two vertices  $p$ and $q$ in a connected graph $G$, we consider the graph $\tilde{G}$ that is obtained from $G$ by adding an edge $e$ between $p$ and $q$. Since $p$ and $q$ are adjacent in $\tilde{G}$, assuming \thmref{thm res and spantree} for adjacent vertices gives
\begin{eqnarray*}
r_{\tilde{G}}(p,q)=\frac{t(\tilde{G}_{pq})}{t(\tilde{G})}.
\end{eqnarray*}
On the other hand, we have $r_{\tilde{G}}(p,q)=\frac{r(p,q)}{1+r(p,q)}$, $t(\tilde{G}_{pq})=t(G_{pq})$, $t(\tilde{G})=t(\tilde{G}_{pq})+t(\tilde{G}-e)$ and $\tilde{G}-e=G$. Thus, we have
\begin{eqnarray*}
\frac{r(p,q)}{1+r(p,q)}=\frac{t(G_{pq})}{t(G_{pq})+t(G)}.
\end{eqnarray*}
Taking the reciprocal of both sides of this equation gives \thmref{thm res and spantree} for any $p$ and $q$.

Note that we set $t(G_{pp}):=0$ to make the equality in \thmref{thm res and spantree} hold when $p=q$ as $r(p,p)=0$. When $G$ has only the vertices $p$ and $q$, $G_{pq}$ has only one vertex $pq$, but then the equality in \thmref{thm res and spantree} holds if and only if $t(G_{pq})=1$.
Therefore, we set  $t(G):=1$ when $G$ has only one vertex. 

We can extend \thmref{thm res and spantree} to voltage values by combining \eqref{eqn sym2} and \thmref{thm res and spantree}:
\begin{theorem}\label{thm vol and spantree}
Let $G$ be a connected graph, and let $p, \, q, \, s \in \vv{G}$. Then
\begin{eqnarray*}
j_p(q,s)=\frac{t(G_{pq})+t(G_{ps})-t(G_{qs})}{2t(G)}.
\end{eqnarray*}
\end{theorem}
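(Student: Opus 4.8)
The plan is to combine the two results that immediately precede this statement, since the voltage function has already been written as a signed combination of resistances, and the resistances have in turn already been identified with ratios of spanning-tree counts. Concretely, I would start from the identity \eqref{eqn sym2}, namely $2j_p(q,s) = r(p,q) + r(p,s) - r(q,s)$, which holds for any three vertices $p$, $q$, $s$.

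Next I would substitute the spanning-tree expression for each of the three resistances appearing on the right-hand side. By \thmref{thm res and spantree} we have $r(p,q) = t(G_{pq})/t(G)$, and likewise $r(p,s) = t(G_{ps})/t(G)$ and $r(q,s) = t(G_{qs})/t(G)$. Since all three fractions share the common denominator $t(G)$, collecting them gives $2j_p(q,s) = \big( t(G_{pq}) + t(G_{ps}) - t(G_{qs}) \big)/t(G)$, and dividing by two yields the claimed formula.

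The proof therefore contains no genuine obstacle: it is essentially a one-line substitution of \thmref{thm res and spantree} into \eqref{eqn sym2}. The only point I would take care to verify is that the degenerate cases in which two of $p$, $q$, $s$ coincide remain consistent with the conventions $t(G_{pp}) := 0$ and $t(G) := 1$ for a single-vertex graph, which were fixed just before the statement. For instance, taking $q = s$ should recover $j_p(q,q) = r(p,q) = t(G_{pq})/t(G)$ in agreement with \eqref{eqn sym}, while taking $p = s$ should give $j_p(q,p) = 0$; both are matched by the right-hand side once the conventions are in force. I expect the author's proof to be exactly this direct substitution.
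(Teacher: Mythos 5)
Your proposal is correct and is exactly the paper's route: the author states that \thmref{thm vol and spantree} is obtained ``by combining \eqref{eqn sym2} and \thmref{thm res and spantree}'', i.e.\ substituting $r(p,q)=t(G_{pq})/t(G)$ and its analogues into $2j_p(q,s)=r(p,q)+r(p,s)-r(q,s)$, just as you do. Your extra check of the degenerate cases against the conventions $t(G_{pp}):=0$ and $t(G):=1$ is a sensible addition but changes nothing of substance.
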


If $e_i$ is an edge with end points $\pp$ and $\qq$, we have $r(\pp,\qq)=\frac{\li \ri}{\li+\ri}$ with $\ri=r_{G-e_i}(\pp,\qq)$ and $t(G_{\pp \qq})=t(\overline{G}_i)$. Thus, if $\li =1$, then we derive the following equalities by using \thmref{thm spantree cont-del} and \thmref{thm res and spantree}:
\begin{equation}\label{eqn ri and tgi}
\begin{split}
\ri=\frac{t(\overline{G}_i)}{t(G-e_i)}, \quad \quad \frac{\ri}{\li+\ri}=\frac{t(\overline{G}_i)}{t(G)}, \quad \text{and} \quad
\frac{\li}{\li+\ri}=\frac{t(G-e_i)}{t(G)}.
\end{split}
\end{equation}
Next, we give averaging results of $t(G)$ for edge contractions and edge deletions:
\begin{theorem}\label{thm spantree average}
Let $G$ be a connected graph. Then
\begin{eqnarray*}
t(G)=\frac{1}{n-1}\sum_{e_i \in E(G)}t(\overline{G}_i).
\end{eqnarray*}
Morevover, if G has no bridges, we have
\begin{eqnarray*}
t(G)=\frac{1}{g}\sum_{e_i \in E(G)}t(G-e_i).
\end{eqnarray*}
\end{theorem}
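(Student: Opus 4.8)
The plan is to deduce both averaging identities from a single classical input, \emph{Foster's theorem}, namely that
\[
\sum_{e_i \in \ee{G}} r(\pp,\qq) = n-1
\]
for any connected graph $G$ with unit edge lengths, where $\pp,\qq$ denote the end points of $e_i$. Granting this, each identity becomes a one-line computation using \thmref{thm res and spantree} and the relations \eqref{eqn ri and tgi}, and the two identities will turn out to be the two complementary ``halves'' of the deletion--contraction rule \thmref{thm spantree cont-del}.

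For the first identity I would start from \thmref{thm res and spantree} applied to the end points of $e_i$. Since identifying $\pp$ and $\qq$ turns $e_i$ into a loop, which contributes nothing to the spanning tree count, we have $t(G_{\pp\qq})=t(\overline{G}_i)$, whence $r(\pp,\qq)=t(\overline{G}_i)/t(G)$. Summing over all edges and invoking Foster's theorem gives
\[
\frac{1}{t(G)}\sum_{e_i \in \ee{G}} t(\overline{G}_i) = \sum_{e_i \in \ee{G}} r(\pp,\qq) = n-1 ,
\]
which is the first claim after clearing $t(G)$. This step needs no bridge hypothesis, since $t(G_{\pp\qq})=t(\overline{G}_i)$ and $r(\pp,\qq)=1$ remain correct when $e_i$ is a bridge. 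For the second identity I would use the complementary relation: with $\li=1$, combining the middle and last equalities in \eqref{eqn ri and tgi} gives $t(G-e_i)/t(G) = \li/(\li+\ri) = 1 - \ri/(\li+\ri) = 1 - r(\pp,\qq)$. Assuming $G$ has no bridges, this holds for every edge (which is exactly where the bridgeless hypothesis enters, so that each $G-e_i$ is connected and \eqref{eqn ri and tgi} applies), so summing over $\ee{G}$ and using Foster's theorem yields
\[
\frac{1}{t(G)}\sum_{e_i \in \ee{G}} t(G-e_i) = \sum_{e_i \in \ee{G}} \big( 1 - r(\pp,\qq) \big) = m - (n-1) = g .
\]
As a consistency check, adding the two identities and using \thmref{thm spantree cont-del} gives $\sum_{e_i} t(G) = m\,t(G)$, matching $(n-1)+g=m$.

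The main obstacle is Foster's theorem itself, the one ingredient not already in the excerpt. I see two routes. The linear-algebraic route uses \lemref{lem disc}: writing $r(\pp,\qq)=(\mathbf{e}_{\pp}-\mathbf{e}_{\qq})^{\mathsf T}\plm(\mathbf{e}_{\pp}-\mathbf{e}_{\qq})$ and summing over edges gives $\sum_{e_i} r(\pp,\qq)=\tr\big(\plm\,\lm\big)$, because $\sum_{e_i}(\mathbf{e}_{\pp}-\mathbf{e}_{\qq})(\mathbf{e}_{\pp}-\mathbf{e}_{\qq})^{\mathsf T}=\lm$ for unit weights; since $G$ is connected, $\plm\lm$ is the orthogonal projection onto the $(n-1)$-dimensional range of $\lm$, so its trace equals $n-1$. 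The second route is a direct double count that bypasses Foster entirely: the spanning trees of $G$ containing $e_i$ are in bijection (via contraction) with those of $\overline{G}_i$, and those avoiding $e_i$ with those of $G-e_i$; summing the indicator $\mathbf 1[e_i\in T]$ over edges and trees and using $|\ee{T}|=n-1$ for every spanning tree $T$ yields $\sum_{e_i} t(\overline{G}_i)=(n-1)t(G)$ at once, and likewise $\sum_{e_i} t(G-e_i)=(m-n+1)t(G)$ from $\mathbf 1[e_i\notin T]$. I would present the resistance route as the primary argument, since it is most in keeping with the rest of the paper, and record the double-counting interpretation as a remark.
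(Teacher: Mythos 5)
Your primary argument is essentially the paper's own first proof: the paper likewise deduces the first identity from Foster's first identity $\sum_{e_i \in \ee{G}} \ri/(\li+\ri)=n-1$ (which it cites from \cite[pg. 26]{CR} and \cite{C1} rather than proving) together with \thmref{thm res and spantree}, and it obtains the second identity by summing the deletion--contraction rule \thmref{thm spantree cont-del} over all edges, i.e.\ $m\,t(G)=(n-1)t(G)+\sum_{e_i} t(G-e_i)$ --- which is algebraically the same as your step $t(G-e_i)/t(G)=1-r(\pp,\qq)$ summed via Foster. Where you genuinely add value is in what the paper leaves as a citation or omits: your trace computation $\sum_{e_i} r(\pp,\qq)=\tr(\plm\lm)=n-1$ makes the argument self-contained via \lemref{lem disc}, and your double-counting remark is in fact the shortest complete proof of the whole theorem, since $\sum_{e_i}\sum_{T}\mathbf{1}[e_i\in T]=(n-1)\,t(G)$ and $\sum_{T}\mathbf{1}[e_i\in T]=t(\overline{G}_i)$ give both identities without any electrical input; it also exposes that the bridgeless hypothesis in the second identity is needed only so that $\ri$ and \eqref{eqn ri and tgi} make sense (under the convention that a disconnected graph has no spanning trees, a bridge contributes $t(G-e_i)=0$ on both sides and the identity survives). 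The paper instead offers a different second proof, specializing the resistance-averaging identities of \cite[Theorem 4.7]{C4} to $s,t$ non-adjacent and translating via \thmref{thm res and spantree}, which your proposal does not anticipate. One small caveat common to your write-up and the paper: since $G$ may have self-loops, a loop $e_i$ has $r(\pp,\qq)=0$ and lies in no spanning tree, so in both identities one must read $t(\overline{G}_i)$ as the count matching $t(G_{\pp\qq})=t(G_{pp})=0$ (equivalently, restrict the sums to non-loop edges); your bijection ``trees containing $e_i$ $\leftrightarrow$ trees of $\overline{G}_i$'' is stated only for non-loops, which is the correct reading.
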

\begin{proof}
We provide two proofs. 

Firstly, we have $\sum_{e_i \in \ee{G}}\frac{\ri}{\li+\ri}=n-1$ \cite[pg. 26]{CR}, which is equivalent to the Foster's first identity (see \cite{C1} and references therein). Then 
\begin{equation*}\label{}
\begin{split}
n-1 &=\sum_{e_i \in \ee{G}}\frac{\ri}{\li+\ri} =\sum_{e_i \in \ee{G}}r(\pp,\qq), \quad \text{as $\li =1$ for each edge $e_i$}\\ 
&= \sum_{e_i \in \ee{G}} \frac{t(\overline{G}_i)}{t(G)}, \quad \text{by \thmref{thm res and spantree}}\\ 
&= \frac{1}{t(G)} \sum_{e_i \in \ee{G}} t(\overline{G}_i).
\end{split}
\end{equation*}
This gives the proof of the first equality in the theorem. On the other hand, by summing up the identity in \thmref{thm spantree cont-del} over all edges gives
\begin{equation*}\label{}
\begin{split}
\sum_{e_i \in \ee{G}}t(G) =\sum_{e_i \in E(G)}t(\overline{G}_i)+\sum_{e_i \in \ee{G}}t(G-e_i)
\end{split}
\end{equation*}
which reduces to
\begin{equation*}\label{}
\begin{split}
m \cdot t(G) =(n-1) t(G)+\sum_{e_i \in \ee{G}}t(G-e_i).
\end{split}
\end{equation*}
This proves the second identity in the theorem as $g=m-n+1$.

The second proof is obtained by applying \thmref{thm res and spantree} and \eqref{eqn ri and tgi} into \cite[Theorem 4.7]{C4}. For any non-adjacent vertices $s$ and $t$, \cite[Theorem 4.7]{C4} gives
\begin{equation*}
\begin{split}
r(s,t)=\frac{1}{n-2} \sum_{e_i \in \ee{G}} \frac{\ri}{\li+\ri} r_{\overline{G}_i}(s,t).
\end{split}
\end{equation*}
Using \thmref{thm res and spantree} and \eqref{eqn ri and tgi} gives
\begin{equation*}
\begin{split}
\frac{t(G_{st})}{t(G)}=\frac{1}{n-2} \sum_{e_i \in \ee{G}} \frac{t(\overline{G}_i)}{t(G)} \frac{t((\overline{G}_i)_{st})}{t(\overline{G}_i)}.\\
\end{split}
\end{equation*}
We have $(\overline{G}_i)_{st}=\overline{(G_{st})_i}$. Since  $s$ and $t$ are not adjacent, $\ee{G}=\ee{G_{st}}$. Thus we can rewrite this equality as follows:
\begin{equation*}
\begin{split}
t(G_{st})=\frac{1}{n-2} \sum_{e_i \in \ee{G_{st}}}t(\overline{(G_{st})_i}),\\
\end{split}
\end{equation*}
where $G_{st}$ has $n-1$ vertices. Since $s$, $t$ and $G$ were chosen arbitrary, the last equality is equivalent to the first equality in the theorem.

Let $G$ has no bridges. Again, for any non-adjacent vertices $s$ and $t$, \cite[Theorem 4.7]{C4} gives
\begin{equation*}
\begin{split}
r(s,t)=\frac{1}{g+1} \sum_{e_i \in \ee{G}} \frac{\li}{\li+\ri} r_{G-{e_i}}(s,t).
\end{split}
\end{equation*}
Using \thmref{thm res and spantree} and \eqref{eqn ri and tgi} gives
\begin{equation*}
\begin{split}
\frac{t(G_{st})}{t(G)}=\frac{1}{g+1} \sum_{e_i \in \ee{G}} \frac{t(G-e_i)}{t(G)} \frac{t((G-e_i)_{st})}{t(G-e_i)}.\\
\end{split}
\end{equation*}
We have $(G-e_i)_{st}=G_{st}-e_i$. Since  $s$ and $t$ are not adjacent, $\ee{G}=\ee{G_{st}}$. Thus we can rewrite this equality as follows:
\begin{equation*}
\begin{split}
t(G_{st})=\frac{1}{g+1} \sum_{e_i \in \ee{G_{st}}}t(G_{st}-e_i),\\
\end{split}
\end{equation*}
where $G_{st}$ has genus $g+1$ as it has $n-1$ vertices and $m$ edges. Since $s$, $t$ and $G$ were chosen arbitrary, the last equality is equivalent to the second equality in the theorem.

This completes the second proof of the theorem.
\end{proof}

\textbf{Example:} Let $G=K_n$ be a complete graph on $n$ vertices. Then it has $m=\frac{n(n-1)}{2}$ edges, and so $g=\frac{(n-2)(n-1)}{2}$. By the symmetry in $K_n$, we have $t(G-e_i)=t(G-e_j)$ and $t(\overline{G}_i)=t(\overline{G}_j)$ for any $e_i$ and $e_j$. Since $t(K_n)=n^{n-2}$ by Cayley's theorem \cite{Cay}, \thmref{thm spantree average} gives
$t(\overline{G}_i)=2 n^{n-3}$ and $t(G-e_i)=n^{n-3}(n-2)$ for any edge $e_i$ of $K_n$.

Next, we recall the following well-known fact:
\begin{theorem}\label{thm span union1}
Let $G=G_1 \cup G_2$ with $G_1 \cap G_2 =\{ p \}$ for a vertex $p$ of $G$. Then
\begin{equation*}
\begin{split}
t(G)=t(G_1) \cdot t(G_2)
\end{split}
\end{equation*}
Moreover, if $\displaystyle{G=\bigcup_{i=1}^{k} G_{i}}$ and  $G_i \cap G_j =\{ p \}$ for every distinct $i$ and $j$ in $\{1, \, 2, \, \ldots, \, k  \}$, then we have
\begin{equation*}
\begin{split}
t(G)=\prod_{i=1}^k t(G_i).
\end{split}
\end{equation*}
\end{theorem}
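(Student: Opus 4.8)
The plan is to establish a bijection between the spanning trees of $G$ and the pairs $(T_1,T_2)$ where $T_i$ is a spanning tree of $G_i$, from which the product formula is immediate; the general $k$-fold version then follows by induction. The structural fact driving everything is that $p$ is a cut vertex: since $\vv{G_1}\cap\vv{G_2}=\{p\}$, the edge sets $\ee{G_1}$ and $\ee{G_2}$ partition $\ee{G}$, and any simple path or cycle in $G$ visits $p$ at most once, so it cannot cross from the $G_1$-part to the $G_2$-part and back.

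First I would show that restriction sends spanning trees to pairs. Given a spanning tree $T$ of $G$, set $T_i = T \cap \ee{G_i}$. For any two vertices $u,v\in\vv{G_1}$, the unique $T$-path between them cannot leave $G_1$: entering and later returning from $G_2$ would force a second visit to the single shared vertex $p$, contradicting the simplicity of paths in a tree. Hence $T_1$ is connected, spans $\vv{G_1}$, and is acyclic as a subgraph of $T$, so it is a spanning tree of $G_1$; symmetrically for $T_2$. Because $\ee{G_1}$ and $\ee{G_2}$ partition $\ee{G}$, we recover $T=T_1\cup T_2$, so $T\mapsto(T_1,T_2)$ is injective.

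Next I would prove surjectivity. Given spanning trees $T_1$ of $G_1$ and $T_2$ of $G_2$, put $T:=T_1\cup T_2$. It spans $\vv{G}=\vv{G_1}\cup\vv{G_2}$ since each $T_i$ spans $\vv{G_i}$, and it is connected because every vertex is joined to $p$ inside its own part. For acyclicity I would simply count edges: using $|\vv{G}|=|\vv{G_1}|+|\vv{G_2}|-1$, the subgraph $T$ has $(|\vv{G_1}|-1)+(|\vv{G_2}|-1)=|\vv{G}|-1$ edges, and a connected graph on $|\vv{G}|$ vertices with $|\vv{G}|-1$ edges is a tree. This yields the desired bijection, hence $t(G)=t(G_1)\cdot t(G_2)$. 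The general statement follows by induction on $k$: writing $H=\bigcup_{i=1}^{k-1}G_i$, the hypotheses give $H\cap G_k=\{p\}$, so the two-part case gives $t(G)=t(H)\cdot t(G_k)$, and the inductive hypothesis gives $t(H)=\prod_{i=1}^{k-1}t(G_i)$.

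The only delicate point, which is the main obstacle, is the cut-vertex argument underlying both directions: that a simple path or cycle cannot cross between $G_1$ and $G_2$ without visiting the single shared vertex $p$ more than once. Once this is isolated, the well-definedness of the restriction map and the connectivity and acyclicity of the union are routine.
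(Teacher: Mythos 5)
Your proof is correct and takes essentially the same route as the paper: the paper's one-line argument ("for every spanning tree of $G_1$ we have $t(G_2)$ choices in $G_2$\dots by the multiplication property of counting," with the $k$-fold case by successive application) is precisely the bijection $T\mapsto(T_1,T_2)$ that you spell out, and your induction matches the paper's "successive application." You have simply supplied the details the paper leaves implicit --- the cut-vertex path argument and the edge count for acyclicity --- all of which are sound.
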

\begin{proof}
For every spanning tree of $G_1$ we have $t(G_2)$ choices in $G_2$ to form a spanning tree of $G$. Thus, the first result follows by the multiplication property of counting. The second equality follows from the successive application of the first one.
\end{proof}
We can generalize \thmref{thm span union1} as follows:
\begin{theorem}\label{thm span union1g}
Let $T$ be a tree graph with $n+1$ vertices, and let $G$ be the graph obtained from $T$ by replacing each of its edge $e_i$ with end points $\pp$ and $\qq$ by a graph $G_i$ in such a way that $\pp$ and $\qq$ are identified by the two vertices $u_i$ and $v_i$ of $G_i$. Then
\begin{equation*}
\begin{split}
t(G)=\prod_{i=1}^{n} t(G_i).
\end{split}
\end{equation*}
\end{theorem}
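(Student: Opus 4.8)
The plan is to prove the identity by induction on $n$, the number of edges of the tree $T$ (equivalently, the number of blocks $G_i$ used in the construction), with \thmref{thm span union1} supplying the inductive step. The base case $n=1$ is immediate: here $T$ has a single edge, so $G=G_1$ and the claimed formula reads $t(G)=t(G_1)$.

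For the inductive step I would exploit the fact that every tree with at least two edges has a leaf. Choose a leaf $\ell$ of $T$, and let $e_n$ be the unique edge incident to $\ell$, with other endpoint $w$. Let $T'$ denote the tree obtained by deleting $\ell$ together with $e_n$; it has $n$ vertices and $n-1$ edges, and carries the blocks $G_1,\dots,G_{n-1}$. Write $G'$ for the graph built from $T'$ by the same replacement procedure. Then $G$ is recovered from $G'$ by attaching the single remaining block $G_n$ along the vertex of $G_n$ identified with $w$.

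The crux is to observe that $G'$ and $G_n$ meet in exactly one vertex. Since $\ell$ is a leaf of $T$, the vertex of $G_n$ identified with $\ell$ occurs in no other block, while the vertex identified with $w$ is the only point shared between $G_n$ and the blocks attached to the edges of $T'$. Hence $G=G'\cup G_n$ with $G'\cap G_n=\{w\}$ a single vertex, and \thmref{thm span union1} gives $t(G)=t(G')\,t(G_n)$. Combining this with the inductive hypothesis $t(G')=\prod_{i=1}^{n-1}t(G_i)$ completes the step, and therefore the proof.

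I expect the only genuine obstacle to be the bookkeeping in the previous paragraph, namely verifying carefully that peeling off a leaf block produces an honest single-vertex intersection. This is precisely where the hypothesis that $T$ is a tree is used, since in a tree a leaf edge shares exactly one endpoint with the remainder; were $T$ allowed to contain a cycle, the peeled block could meet the rest of $G$ in two distinct vertices and \thmref{thm span union1} would no longer apply.
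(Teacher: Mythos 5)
Your proposal is correct and follows essentially the same route as the paper: the paper also peels off a block attached along a leaf edge $e_n$ of $T$, applies \thmref{thm span union1} to get $t(G)=t(G_n)\cdot t(H)$ where $H$ is built from $T-e_n$, and concludes by applying this successively. Your write-up simply makes the induction and the single-vertex-intersection check explicit, which the paper leaves implicit.
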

\begin{proof}
If $e_n$ is the edge of $T$ connected to a vertex of degree $1$, then $t(G)=t(G_n) \cdot t(H)$ by \thmref{thm span union1}, where $H$ is the graph obtained from $T-e_n$ in the same way $G$ is obtained from $T$. Thus the result follows by applying this idea successively.
\end{proof}

The formulas in \thmref{thm span union1} and \thmref{thm span union1g} are applicable if $G$ has a vertex whose removal (removing the vertex and the edges connected to it) disconnects $G$. In that case, $G$ has vertex connectivity $1$. Next, we consider the graphs with vertex connectivity $2$ and obtain a formula for the number of spanning trees.
\begin{theorem}\label{thm span union2}
Let $G=G_1 \cup G_2$ with $G_1 \cap G_2 =\{ p, \, q \}$ for any vertices $p$ and $q$ of $G$. Then
\begin{equation*}
\begin{split}
t(G)=t(G_1) \cdot t(G_{2,pq})+t(G_2) \cdot t(G_{1,pq}), 
\end{split}
\end{equation*}
where $G_{j, pq}$ is obtained from $G_j$ by identifying the vertices $p$ and $q$ in $G_j$ for any $j \in \{ 1, \, 2 \}$.
\end{theorem}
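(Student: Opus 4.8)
The plan is to run the whole argument through the resistance--spanning-tree dictionary of \thmref{thm res and spantree}, namely $r(p,q)=t(G_{pq})/t(G)$, combined with the one-vertex gluing formula of \thmref{thm span union1} and the elementary fact that, between the two shared terminals, the pieces $G_1$ and $G_2$ form a parallel connection.

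First I would analyze the identified graph $G_{pq}$. Identifying $p$ with $q$ throughout $G$ collapses the two common vertices into a single vertex, call it $pq$; because $G_1$ and $G_2$ meet only in $\{p,q\}$, this identification respects the decomposition and yields $G_{pq}=G_{1,pq}\cup G_{2,pq}$ with $G_{1,pq}\cap G_{2,pq}=\{pq\}$. Thus the hypotheses of \thmref{thm span union1} hold for $G_{pq}$ at the single vertex $pq$, giving
\[
t(G_{pq})=t(G_{1,pq})\cdot t(G_{2,pq}).
\]
Second, I would compute $r_G(p,q)$ by circuit reduction. Since $G_1$ and $G_2$ share no interior point, no current passes between their interiors when a potential is applied across $p$ and $q$; hence each piece may be replaced by a single resistor equal to its own effective resistance $r_{G_1}(p,q)$, respectively $r_{G_2}(p,q)$, and these two resistors sit in parallel between $p$ and $q$. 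This gives the conductance-addition identity
\[
\frac{1}{r_G(p,q)}=\frac{1}{r_{G_1}(p,q)}+\frac{1}{r_{G_2}(p,q)}.
\]
Finally, I would substitute \thmref{thm res and spantree} applied to each of $G$, $G_1$, $G_2$, namely $r_G(p,q)=t(G_{pq})/t(G)$ and $r_{G_j}(p,q)=t(G_{j,pq})/t(G_j)$, into the conductance identity and clear denominators using the product formula from the first step; the factor $t(G_{1,pq})\,t(G_{2,pq})$ cancels, leaving exactly $t(G)=t(G_1)\,t(G_{2,pq})+t(G_2)\,t(G_{1,pq})$.

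The step I expect to be the main obstacle is the rigorous justification of the parallel reduction together with its nondegeneracy: one must know that both $G_1$ and $G_2$ are connected, so that each effective resistance is finite and strictly positive and all the spanning-tree counts in play are nonzero, and one must argue cleanly that the mixed network really does reduce to two parallel resistors rather than merely appearing to. As an independent check, I would note the purely combinatorial reading: $t(G_{j,pq})$ equals the number of two-component spanning forests of $G_j$ that separate $p$ from $q$, and sorting the spanning trees of $G$ according to whether the unique $p$--$q$ tree-path lies in $G_1$ or in $G_2$ reproduces the two summands directly, confirming the formula without appeal to electrical networks.
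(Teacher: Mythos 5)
Your proposal is correct and follows essentially the same route as the paper: parallel circuit reduction between the terminals $p$ and $q$, the dictionary $r(p,q)=t(G_{pq})/t(G)$ from \thmref{thm res and spantree} applied to $G$, $G_1$, $G_2$, and the cut-vertex factorization $t(G_{pq})=t(G_{1,pq})\,t(G_{2,pq})$ from \thmref{thm span union1}; your conductance form of the parallel law is just the reciprocal of the paper's $r(p,q)=\frac{r_{G_1}(p,q)\,r_{G_2}(p,q)}{r_{G_1}(p,q)+r_{G_2}(p,q)}$. Your closing combinatorial cross-check (sorting spanning trees by whether the $p$--$q$ tree-path lies in $G_1$ or $G_2$) is a nice independent confirmation but is not part of the paper's argument.
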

\begin{proof}
We have $r(p,q)=\frac{r_{G_1}(p,q) r_{G_2}(p,q)}{r_{G_1}(p,q)+r_{G_2}(p,q)}$ by parallel circuit reduction.
We apply \thmref{thm res and spantree} to obtain
\begin{eqnarray*}
\frac{t(G_{pq})}{t(G)}=\frac{\frac{t(G_{1, \, pq})}{t(G_1)} \frac{t(G_{2, \, pq})}{t(G_2)}}{\frac{t(G_{1, \, pq})}{t(G_1)} + \frac{t(G_{2, \, pq})}{t(G_2)}}.
\end{eqnarray*}
On the other hand, $t(G_{pq})=t(G_{1, \, pq}) t(G_{2, \, pq})$ by \thmref{thm span union1} as the identified vertex $pq$ is a cut vertex in $G_{pq}$ (i.e., removal of $pq$ disconnects $G_{pq}$).
Then the result follows from these equalities.
\end{proof}
\begin{corollary}\label{cor span union2}
Let $G=G_1 \cup P_{k+1}$ with $G_1 \cap P_{k+1} =\{ p, \, q \}$ for any vertices $p$ and $q$ of a graph $G_1$, and let
$p$ and $q$ be the vertices having degree $1$ in the path graph $P_{k+1}$. If $k \geq 2$, then
\begin{equation*}
\begin{split}
t(G)=k \cdot t(G_{1})+ t(G_{1,pq}), 
\end{split}
\end{equation*}
\end{corollary}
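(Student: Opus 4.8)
The plan is to apply \thmref{thm span union2} to the decomposition $G = G_1 \cup P_{k+1}$, taking the second piece $G_2$ to be the path $P_{k+1}$, and then to evaluate the two spanning-tree counts that attach to this path factor. Written out, \thmref{thm span union2} gives
\begin{equation*}
t(G) = t(G_1)\cdot t\big((P_{k+1})_{pq}\big) + t(P_{k+1})\cdot t(G_{1,pq}),
\end{equation*}
so the entire task reduces to computing $t(P_{k+1})$ and $t\big((P_{k+1})_{pq}\big)$, both of which are elementary.

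First I would record that $t(P_{k+1}) = 1$ by \thmref{thm spantree examples}, since a path is itself its own unique spanning tree; this collapses the second term to exactly $t(G_{1,pq})$. Next I would identify the contracted path factor: the path $P_{k+1}$ has $k+1$ vertices and $k$ edges, and $p,q$ are precisely its two degree-one endpoints, so identifying them closes the path through all $k$ edges and produces the cycle $C_k$ on $k$ vertices. The hypothesis $k \geq 2$ is exactly what guarantees this is a genuine cycle rather than a degenerate self-loop, so that \thmref{thm spantree examples} applies and gives $t\big((P_{k+1})_{pq}\big) = t(C_k) = k$; hence the first term becomes $k\cdot t(G_1)$.

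Substituting these two values into the displayed identity yields $t(G) = k\cdot t(G_1) + t(G_{1,pq})$, which is the claim. I expect no substantive obstacle here: the only point requiring care is the combinatorial identification $(P_{k+1})_{pq} = C_k$ together with the role of the condition $k \geq 2$ in keeping that cycle nondegenerate. The corollary is thus simply the specialization of \thmref{thm span union2} to the case in which one of the two glued pieces is a path joining $p$ to $q$.
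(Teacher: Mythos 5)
Your proposal is correct and matches the paper's proof exactly: the paper also obtains the corollary by specializing \thmref{thm span union2} to $G_2 = P_{k+1}$ and invoking \thmref{thm spantree examples} for $t(P_{k+1})=1$ and $t\big((P_{k+1})_{pq}\big)=t(C_k)=k$. Your added remark on why $k \geq 2$ keeps the contracted path a genuine cycle is a correct reading of the hypothesis, just spelled out more explicitly than the paper's one-line proof.
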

\begin{proof}
The result follows from \thmref{thm span union2} and \thmref{thm spantree examples}.
\end{proof}

\textbf{Example:} Let $G$, $G_1$, $G_2$, $p$ and $q$ be as in the first group of graphs in \figref{fig g5andg6s}. If we consider $G$ as the union of two graphs along the cut vertex $s$ in $G$, we have $t(G)=6 \cdot 2$ by \thmref{thm span union1} and \thmref{thm spantree examples}. On the other hand, if we consider $G=G_1 \cup G_2$ with $G_1 \cap G_2 = \{p, \, q \}$. Then $t(G)=1 \cdot 6 + 2 \cdot 3=12$ by \thmref{thm span union2} as $t(G_1)=1$, $t(G_{2, \, pq})=3 \cdot 2 =6$, $t(G_2)=2$, $t(G_{1, \, pq})=3$.

\textbf{Example:} Let $G$, $G_1$, $G_2$, $p$ and $q$ be as in the second group of graphs in \figref{fig g5andg6s}. Then $t(G)=8 \cdot 21 + 13 \cdot 8=272$ by \thmref{thm span union2} as $t(G_1)=8$, $t(G_{2, \, pq})=21$, $t(G_2)=13$, $t(G_{1, \, pq})=8$. Here, we computed each of $t(G_1)$, $t(G_{1, \, pq})$, $t(G_2)$, $t(G_{2, \, pq})$ by using \thmref{thm span union2}, \thmref{thm span union1}, \thmref{thm spantree cont-del} and \thmref{thm spantree examples}.
%
\begin{figure}
\centering
\includegraphics[scale=0.55]{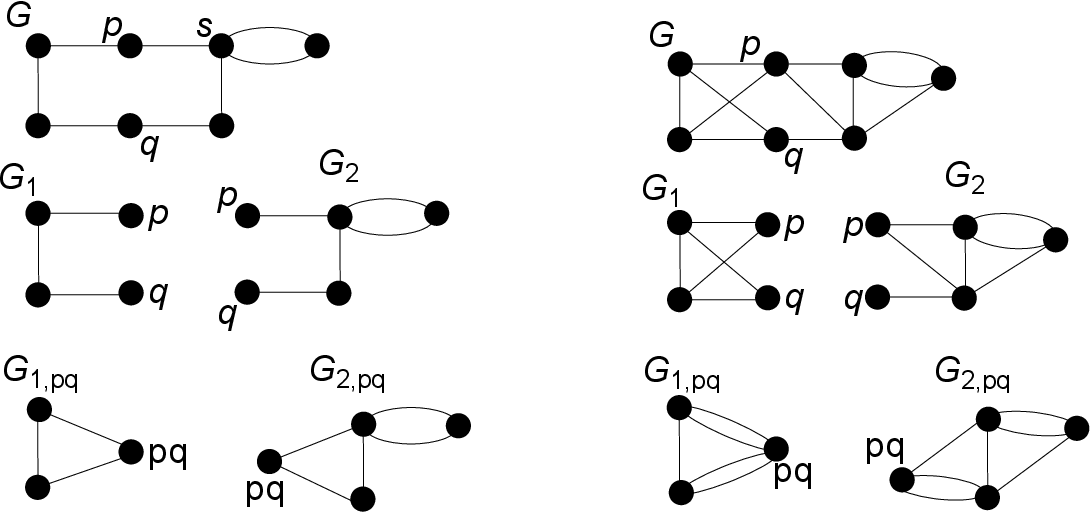} \caption{The graphs $G$, $G_1$, $G_2$, $G_{1, \, pq}$ and $G_{2, \, pq}$.}\label{fig g5andg6s}
\end{figure}

\begin{theorem}\label{thm span vertex del0}
Let $p$, $q$, and $u$ be vertices of a graph $G$, and let $u$ be adjacent to the vertices $p$ and $q$ via by $a$ and $b$ number of edges, respectively.
Then, we have
\begin{equation*}
\begin{split}
t(G)=(a+b) t(H)+ a b \, t(H_{pq}), 
\end{split}
\end{equation*}
where $H$ is the graph obtained from $G$ by deleting the vertex $u$.
\end{theorem}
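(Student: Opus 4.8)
The plan is to reduce everything to resistances via \thmref{thm res and spantree} and then exploit the special structure at $u$, exactly in the spirit of the proof of \thmref{thm span union2}. Since $u$ is joined only to $p$ and $q$, I will compute the two quantities $t(G_{pq})$ and $r(p,q)$ on $G$ by elementary circuit reductions, and then read off $t(G)$ from the identity $r(p,q)=t(G_{pq})/t(G)$.

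First I would compute $t(G_{pq})$. Identifying $p$ and $q$ merges the $a$ edges $u$--$p$ and the $b$ edges $u$--$q$ into $a+b$ parallel edges joining $u$ to the single vertex $pq$; since $u$ has no other neighbors, these edges together with $u$ and $pq$ form a banana graph $B_{a+b}$ glued to $H_{pq}$ at the cut vertex $pq$. Hence \thmref{thm span union1} and \thmref{thm spantree examples} give $t(G_{pq})=t(B_{a+b})\cdot t(H_{pq})=(a+b)\,t(H_{pq})$.

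Next I would compute $r(p,q)$ on $G$. The $a$ unit-length edges between $u$ and $p$ reduce in parallel to a single resistor of value $1/a$, and the $b$ edges between $u$ and $q$ to a resistor of value $1/b$; as $u$ has degree two, these are in series, giving a $p$--$q$ connection of resistance $\frac1a+\frac1b=\frac{a+b}{ab}$ through $u$. This is in parallel with the connection between $p$ and $q$ provided by $H$, whose resistance is $r_H(p,q)=t(H_{pq})/t(H)$ by \thmref{thm res and spantree} applied to $H$. Parallel reduction then yields
\begin{equation*}
r(p,q)=\frac{\frac{a+b}{ab}\,r_H(p,q)}{\frac{a+b}{ab}+r_H(p,q)}=\frac{(a+b)\,t(H_{pq})}{ab\,t(H_{pq})+(a+b)\,t(H)}.
\end{equation*}
Comparing this with $r(p,q)=t(G_{pq})/t(G)=(a+b)\,t(H_{pq})/t(G)$ and cancelling the common factor $(a+b)\,t(H_{pq})$ gives exactly $t(G)=(a+b)\,t(H)+ab\,t(H_{pq})$.

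The one point that needs care---and the step I expect to be the main obstacle---is the legitimacy of cancelling $(a+b)\,t(H_{pq})$, which requires this factor to be nonzero. Since $u$ is adjacent to both $p$ and $q$ we have $a,b\ge 1$, so $a+b>0$; and I would show $t(H_{pq})>0$ by verifying that $H_{pq}$ is connected: any vertex $w\neq u$ is joined to $p$ or $q$ by a path in $G$, and since $u$'s only neighbors are $p$ and $q$, such a path can be truncated to one lying in $H$, so $w$ reaches $p$ or $q$ in $H$ and hence reaches $pq$ in $H_{pq}$. It then remains only to note that the degenerate case in which $H$ is disconnected (so $t(H)=0$, with $p$ and $q$ in different components, i.e. $r_H(p,q)=\infty$) is consistent with the formula, since there the $u$--path is the only $p$--$q$ connection and the computation returns $t(G)=ab\,t(H_{pq})$. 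Alternatively, one may bypass the resistance computation by a direct count: in any spanning tree of $G$ the vertex $u$ has degree $1$ or $2$ (two parallel edges would create a cycle), the degree-one trees contribute $(a+b)\,t(H)$, and the degree-two trees contribute $ab$ times the number of spanning $2$-forests of $H$ separating $p$ and $q$, the latter count being $t(H_{pq})$.
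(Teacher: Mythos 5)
Your proof is correct and is essentially the paper's: the paper simply invokes \thmref{thm span union2} with $G_1=H$ and $G_2$ the three-vertex gadget through $u$ (so $t(G_2)=ab$ and $t(G_{2,pq})=a+b$), and your circuit-reduction computation of $r(p,q)$ together with the cut-vertex factorization $t(G_{pq})=(a+b)\,t(H_{pq})$ is exactly the proof of that theorem unwound in this special case. Your added care is a genuine improvement rather than a detour: the resistance argument (both yours and the paper's, via \thmref{thm res and spantree}) tacitly needs $H$ connected and $t(H_{pq})\neq 0$, and your separate treatment of the case where $u$ is a cut vertex, plus the alternative bijective count via the degree of $u$ in a spanning tree, correctly covers what the paper's one-line citation leaves implicit.
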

\begin{proof}
Note that the deletion of a vertex requires also the deletions of all edges that are adjacent to that vertex.
We take $G_1=H$ and $G_2$ as the graph with only the vertices $p$, $q$ and $u$ such that there are $a$ edges between $p$ and $u$, $b$ edges between $q$ and $u$. Then $G=G_1 \cup G_2$ with 
$G_1 \cap G_2 =\{ p, \, q \}$.
Since $t(G_2)=a b$, $t(G_{2,pq})=a+b$, the result follows from \thmref{thm span union2}.
\end{proof}
We can generalize \thmref{thm span union2} in several ways. For example:
\begin{theorem}\label{thm span union3}
Let $\displaystyle{G=\bigcup_{i=1}^{k} G_{i}}$ with $G_i \cap G_j =\{ p, \, q \}$ for every distinct $i$ and $j$ in $\{1, \, 2, \, \ldots, \, k  \}$. Then we have
\begin{equation*}
\begin{split}
t(G)= \prod_{j=1}^k t(G_{j, \, pq}) \sum_{i=1}^{k} \frac{t(G_i)}{t(G_{i, \, pq})},
\end{split}
\end{equation*}
where $G_{j, pq}$ is obtained from $G_j$ by identifying the vertices $p$ and $q$ in $G_j$ for any $j \in \{ 1, \, 2, \, \ldots, \, k \}$.

In particular, if each $G_i$ is a copy of the same graph $G_1$, then
$t(G)=k \cdot t(G_1) \cdot t(G_{1, \, pq})^{k-1}$.
\end{theorem}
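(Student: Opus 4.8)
The plan is to follow the proof of \thmref{thm span union2} verbatim, replacing the two-branch parallel reduction by its $k$-branch analogue. Since the subgraphs $G_1, \ldots, G_k$ meet pairwise only in the two nodes $p$ and $q$, in the associated electrical network they form $k$ parallel branches between these two nodes. Hence the $k$-branch parallel circuit reduction gives
\begin{equation*}
\frac{1}{r(p,q)} = \sum_{i=1}^{k} \frac{1}{r_{G_i}(p,q)}.
\end{equation*}

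First I would rewrite each branch resistance via \thmref{thm res and spantree}, namely $r_{G_i}(p,q) = \frac{t(G_{i,pq})}{t(G_i)}$, so that $\frac{1}{r_{G_i}(p,q)} = \frac{t(G_i)}{t(G_{i,pq})}$. Applying \thmref{thm res and spantree} to $G$ itself, $r(p,q) = \frac{t(G_{pq})}{t(G)}$, the parallel identity becomes
\begin{equation*}
\frac{t(G)}{t(G_{pq})} = \sum_{i=1}^{k} \frac{t(G_i)}{t(G_{i,pq})}.
\end{equation*}

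Next I would evaluate $t(G_{pq})$ using the multi-component form of \thmref{thm span union1}. After identifying $p$ and $q$ in $G$, the resulting graph is $G_{pq} = \bigcup_{i=1}^{k} G_{i,pq}$, and any two of these pieces now share only the single identified vertex $pq$; that is, $G_{i,pq} \cap G_{j,pq} = \{ pq \}$ for distinct $i$ and $j$. Thus $pq$ is a cut vertex through which all pieces attach, and \thmref{thm span union1} yields $t(G_{pq}) = \prod_{j=1}^{k} t(G_{j,pq})$. Substituting this into the previous display and solving for $t(G)$ gives
\begin{equation*}
t(G) = \prod_{j=1}^{k} t(G_{j,pq}) \sum_{i=1}^{k} \frac{t(G_i)}{t(G_{i,pq})},
\end{equation*}
as claimed. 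The special case follows by setting each $G_i$ equal to $G_1$: the product becomes $t(G_{1,pq})^k$, the sum becomes $k \cdot \frac{t(G_1)}{t(G_{1,pq})}$, and the two combine to $k \cdot t(G_1) \cdot t(G_{1,pq})^{k-1}$.

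The argument is a clean generalization of the $k=2$ case, so I expect no serious obstacle. The one point demanding care is the justification that $pq$ is a cut vertex of $G_{pq}$ with the pieces $G_{i,pq}$ pairwise meeting only there, since this is exactly what licenses the multiplicative formula of \thmref{thm span union1}; and dually, that the parallel reduction is valid precisely because the $G_i$ share no vertices other than the two terminals $p$ and $q$, so that no current can pass between distinct branches except through those terminals.
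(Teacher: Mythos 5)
Your proposal is correct and matches the paper's own proof essentially verbatim: both use the $k$-branch parallel circuit reduction $\frac{1}{r(p,q)}=\sum_{i=1}^{k}\frac{1}{r_{G_i}(p,q)}$, convert resistances to spanning-tree ratios via \thmref{thm res and spantree}, and evaluate $t(G_{pq})=\prod_{j=1}^{k}t(G_{j,\,pq})$ by \thmref{thm span union1} since the identified vertex $pq$ is a cut vertex. Your explicit attention to why the pieces $G_{i,pq}$ meet only at $pq$ is a fine addition but not a departure from the paper's argument.
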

\begin{proof}
We have  $\displaystyle{\frac{1}{r(p,q)}= \sum_{i=1}^{k} \frac{1}{r_{G_i}(p,q)}}$
by parallel circuit reduction.
We apply \thmref{thm res and spantree} to obtain
\begin{eqnarray*}
\frac{t(G)}{t(G_{pq})}= \sum_{i=1}^{k} \frac{t(G_i)}{t(G_{i, \, pq})}.
\end{eqnarray*}
On the other hand, $\displaystyle{t(G_{pq})=\prod_{j=1}^k t(G_{j, \, pq}) }$ by \thmref{thm span union1} as the identified vertex $pq$ is a cut vertex in $G_{pq}$.
Then the result follows from these equalities.
\end{proof}
Note that the graph $G$ in \thmref{thm span union3} can be seen as the graph $BG_k$ which is obtained from the banana graph $B_k$ by replacing each of its edges by a graph $G_i$ in such a way that the given two vertices of $G_i$ are identified by the two vertex of $B_k$. 

Another generalization of \thmref{thm span union2} can be given as follows:
\begin{theorem}\label{thm span union4}
Let $CG_n$ be the graph obtained from the cycle graph $C_n$ with vertex set $\{ p_1, \, \ldots \, , p_n \}$ by replacing each edge $e_i$  by a graph $G_i$ in such a way that the vertices of $e_i$ are identified with the given two vertices $s_i$ and $t_i$ of $G_i$. Then we have
\begin{equation*}
\begin{split}
t(CG_n)= \prod_{i=1}^n t(G_{i}) \sum_{i=1}^{n} \frac{t(G_{i, \, s_i t_i})}{t(G_{i})},
\end{split}
\end{equation*}
where $G_{i, s_i t_i}$ is obtained from $G_i$ by identifying the vertices $s_i$ and $t_i$ in $G_i$ for any $i \in \{ 1, \, 2, \, \ldots, \, n \}$.

In particular, if each $G_i$ is a copy of the same graph $G$ with fixed vertices $s$ and $t$, then
$t(CG_n)=n \cdot t(G)^{n-1} \cdot t(G_{st})$.
\end{theorem}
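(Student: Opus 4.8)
The plan is to argue by induction on $n$, peeling off one blob at a time and invoking \thmref{thm span union2} at each stage; this keeps the argument entirely within the tools already established. Label the cycle so that $e_i$ joins $p_i$ and $p_{i+1}$ (indices read modulo $n$), and so that the terminals $s_i,t_i$ of $G_i$ are identified with $p_i,p_{i+1}$. For the base case $n=2$ the graph $C_2$ is the digon, so $CG_2=G_1\cup G_2$ with $G_1\cap G_2=\{p_1,p_2\}$, and \thmref{thm span union2} gives exactly $t(CG_2)=t(G_1)\,t(G_{2,s_2t_2})+t(G_2)\,t(G_{1,s_1t_1})$, which is the asserted identity for $n=2$.

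For the inductive step, fix $n\geq 3$ and write $CG_n=G_1\cup H$, where $H=G_2\cup\cdots\cup G_n$ is the chain of blobs obtained by replacing the edges of the path $p_2,p_3,\ldots,p_n,p_1$. Then $G_1\cap H=\{p_1,p_2\}$, so \thmref{thm span union2} yields $t(CG_n)=t(G_1)\,t(H_{p_1p_2})+t(H)\,t(G_{1,s_1t_1})$. The two remaining quantities are identified structurally: since $H$ is a tree (a path) of blobs, \thmref{thm span union1g} gives $t(H)=\prod_{i=2}^{n}t(G_i)$; and identifying the two endpoints $p_1,p_2$ of that path closes it into a cycle, so $H_{p_1p_2}$ is precisely the cycle-of-blobs $CG_{n-1}$ built from $G_2,\ldots,G_n$, to which the induction hypothesis applies. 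Thus $t(H_{p_1p_2})=\sum_{j=2}^{n}t(G_{j,s_jt_j})\prod_{2\leq i\leq n,\ i\neq j}t(G_i)$.

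Substituting these into the span-union-2 identity finishes the computation: the first term equals $\sum_{j=2}^{n}t(G_{j,s_jt_j})\prod_{1\leq i\leq n,\ i\neq j}t(G_i)$, the extra factor $t(G_1)$ filling in the index $i=1$, while the second term is exactly the missing $j=1$ summand $t(G_{1,s_1t_1})\prod_{i\neq 1}t(G_i)$. Adding them gives $t(CG_n)=\sum_{j=1}^{n}t(G_{j,s_jt_j})\prod_{i\neq j}t(G_i)=\prod_{i=1}^{n}t(G_i)\sum_{i=1}^{n}t(G_{i,s_it_i})/t(G_i)$, as claimed. The ``in particular'' case is immediate: setting each $G_i=G$ with fixed $s,t$ gives $\prod_i t(G_i)=t(G)^n$ and $\sum_i t(G_{i,s_it_i})/t(G_i)=n\,t(G_{st})/t(G)$, whose product is $n\cdot t(G)^{n-1}\cdot t(G_{st})$.

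The one point demanding care -- and where I expect the main obstacle to lie -- is the pair of structural claims that $G_1\cap H=\{p_1,p_2\}$ (so that the splitting has vertex connectivity exactly two, as required by \thmref{thm span union2}) and that $H_{p_1p_2}$ really is the smaller cycle-of-blobs $CG_{n-1}$; once these identifications are granted the rest is index bookkeeping. I would also remark that the same formula has a transparent combinatorial reading that serves as a check: any spanning tree of $CG_n$ must break the single global cycle in exactly one blob $G_j$, restricting to a spanning tree on each of the other blobs (contributing $t(G_i)$ for $i\neq j$) and to an $\{s_j,t_j\}$-separating $2$-forest on $G_j$ (the quantity counted by $t(G_{j,s_jt_j})$), which reproduces the sum directly.
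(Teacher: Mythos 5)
Your proof is correct and coincides with the paper's first proof of \thmref{thm span union4}: the same induction on $n$, with base case $n=2$ from \thmref{thm span union2}, the same splitting of the cycle-of-blobs into one blob plus a path of blobs whose tree count is a product (the paper cites \thmref{thm span union1}, you cite its generalization \thmref{thm span union1g}), and the same recognition that identifying the two endpoints of the path yields the smaller cycle-of-blobs to which the induction hypothesis applies. The paper also records a second, non-inductive proof---viewing $CG_n$ as $G_{p_1p_{n+1}}$ for a path-of-blobs $G$ and using series circuit reduction together with \thmref{thm res and spantree}---but your argument matches the first proof essentially verbatim, and your closing combinatorial reading (each spanning tree breaks the global cycle in exactly one blob) is a sound independent check.
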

\begin{proof} We provide two proofs.
The first one is by induction on $n$. 

When $n=2$, the result follows from \thmref{thm span union2}. Because, we can view $CG_2$ as $G_1 \cup G_2$  with $G_1 \cap G_2 =\{ p_1, \, p_2 \}$ so that the vertices $s_1$ and $t_1$ of $G_1$ are identified with $p_1$ and $p_2$, respectively and that the vertices $s_2$ and $t_2$ of $G_2$ are identified with $p_2$ and $p_1$, respectively.

Assume that the given identity holds for some integer $k \geq 2$. We note that $CG_{k+1}=  G_{k+1} \cup H$ with $G_{k+1} \cap H =\{ p_{k+1}, \, p_{1} \}$, where $H_{p_{k+1} p_1}$ is nothing but $CG_{k}$. Moreover, $t(H)=\prod_{i=1}^k t(G_{i})$ by \thmref{thm span union1} as each vertex $p_i$ is a cut vertex in $H$ for all $i \in \{2, \, \ldots, \, k  \}$. Then we have
\begin{equation*}
\begin{split}
t(CG_{k+1})&= t(G_{k+1}) \cdot t(H_{pq})+t(H) \cdot t(G_{k+1,s_{k+1}t_{k+1}}), \quad \text{by \thmref{thm span union2}}\\
&=t(G_{k+1}) \cdot t(CG_{k})+\prod_{i=1}^k t(G_{i}) \cdot t(G_{k+1,s_{k+1}t_{k+1}}), \quad \text{by the observations above}\\
&=t(G_{k+1}) \cdot \prod_{i=1}^k t(G_{i}) \sum_{i=1}^{k} \frac{t(G_{i, \, s_i t_i})}{t(G_{i})}+\prod_{i=1}^k t(G_{i}) \cdot t(G_{k+1,s_{k+1}t_{k+1}})\\
&=\prod_{i=1}^{k+1} t(G_{i}) \sum_{i=1}^{k} \frac{t(G_{i, \, s_i t_i})}{t(G_{i})}+\prod_{i=1}^{k+1} t(G_{i}) \cdot 
\frac{t(G_{k+1,s_{k+1}t_{k+1}})}{t(G_{k+1})},
\end{split}
\end{equation*}
where the third equality follows from the induction assumption. This proves the given identity for $k+1$.
Then the result follows from the principle of induction.

The second proof is based on the fact that $CG_n$ can be seen as $G_{p_1 p_{n+1}}$, where $G$ is the graph obtained from the path graph $P_{n+1}$
by replacing each of its edge $e_i$ by the graph $G_i$ so that the vertices $p_i$ and $p_{i+1}$ of $P_{n+1}$ are identified by the vertices $s_i$ and $t_i$ in $G_i$. In $G$, we have $\displaystyle{r_G(p_1,p_{n+1})=\sum_{i=1}^{n}r_{G_i}(s_i,t_i)}$ by series connection. Then by \thmref{thm res and spantree} we have 
$$
\frac{t(G_{p_1p_{n+1}})}{t(G)}=\sum_{i=1}^{n} \frac{t(G_i,s_it_i)}{t(G_i)}.
$$
On the other hand, $\displaystyle{t(G)=\prod_{i=1}^{n} t(G_{i})}$ by \thmref{thm span union1}. 
Thus, 
$$t(G_{p_1p_{n+1}})=\prod_{i=1}^{n} t(G_{i})\sum_{i=1}^{n} \frac{t(G_i,s_it_i)}{t(G_i)}.$$
Then the result follows by noting that $G_{p_1p_{n+1}}$ is nothing but $CG_n$. 
\end{proof}

We can use  \thmref{thm span union1g},   \thmref{thm span union3} and \thmref{thm span union4} in various combinations to find the number of spanning trees of more complicated graphs. One possible usage is given as follows: 
\begin{corollary}\label{cor replace banana}
Let $G_i$ be the graph obtained from the path graph $P_{n_i+1}$ by replacing each of its edge $e_j$ by a graph $G_{i,j}$ in such a way that 
the end points $p_j$ and $p_{j+1}$ of $e_j$ are identified by the given two vertices $s_j$ and $t_j$ in  $G_{i,j}$ for each $j =1, \, \ldots, \, n_i$. For the graph $BG_k$ obtained from $B_k$ with vertices $p$ and $q$ by replacing each of its edges by the graph $G_i$ so that $p_1 \in G_i$ is identified by $p$ and  $p_{n_i+1} \in G_i$ is identified by $q$ for each  $i =1, \, \ldots, \, k$. Then we have
\begin{equation*}
\begin{split}
t(BG_k)= \prod_{i=1}^k \Big[ \prod_{j=1}^{n_i} t(G_{i,j}) \sum_{j=1}^{n_i} \frac{t(G_{i, j, \, s_j t_j})}{t(G_{i,j})}  \Big]
\sum_{i=1}^{k} \frac{\prod_{j=1}^{n_i}t(G_{i, j})}{\prod_{j=1}^{n_i} t(G_{i,j}) \sum_{j=1}^{n_i} \frac{t(G_{i, j, \, s_j t_j})}{t(G_{i,j})}}
\end{split}
\end{equation*}
In particular, if each $G_{i,j}$ is the same graph $H$ given with fixed vertices $s$ and $t$, and that $n_i=n$, then we have
$t(BG_k)= k \cdot n^{k-1} t(H)^{(n-1)(k-1)+n} t(H_{st})^{k-1}.$ 
\end{corollary}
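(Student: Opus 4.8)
The plan is to build $BG_k$ as a two-level nested construction and apply the earlier structural theorems in sequence. First I would analyze the inner structure: each $G_i$ is exactly the graph obtained from a path $P_{n_i+1}$ by replacing each edge $e_j$ with $G_{i,j}$, which is precisely the setup governed by \thmref{thm span union1g} (for the product of spanning tree counts when identified at cut vertices) together with the series-connection reasoning used in the second proof of \thmref{thm span union4}. Concretely, I expect
\begin{equation*}
t(G_i)=\prod_{j=1}^{n_i} t(G_{i,j}), \qquad
t(G_{i,\,p_1 p_{n_i+1}})=\prod_{j=1}^{n_i} t(G_{i,j}) \sum_{j=1}^{n_i} \frac{t(G_{i,j,\,s_j t_j})}{t(G_{i,j})},
\end{equation*}
where the first identity is \thmref{thm span union1g} applied to the underlying path (a tree), and the second is exactly the conclusion of \thmref{thm span union4} since $G_{i,\,p_1 p_{n_i+1}}$ is by construction the cycle-replacement graph $CG_{n_i}$ with the two endpoints joined.

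Next I would treat the outer structure. The graph $BG_k$ is obtained from the banana graph $B_k$ by replacing each of its $k$ parallel edges with the graph $G_i$, identifying $p_1\in G_i$ with $p$ and $p_{n_i+1}\in G_i$ with $q$. This is exactly the hypothesis of \thmref{thm span union3} with the role of each constituent graph played by $G_i$ and the two identified vertices being $p$ and $q$. Applying \thmref{thm span union3} directly gives
\begin{equation*}
t(BG_k)=\prod_{i=1}^k t(G_{i,\,pq}) \sum_{i=1}^{k} \frac{t(G_i)}{t(G_{i,\,pq})},
\end{equation*}
where $G_{i,\,pq}$ coincides with $G_{i,\,p_1 p_{n_i+1}}$ since $p$ and $q$ are identified with $p_1$ and $p_{n_i+1}$ respectively. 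The final step is substitution: I would plug the two inner formulas for $t(G_i)$ and $t(G_{i,\,pq})$ into this outer expression, which yields precisely the stated displayed identity for $t(BG_k)$.

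For the specialization, I would set every $G_{i,j}=H$ with fixed vertices $s,t$ and $n_i=n$. Then $t(G_{i,j})=t(H)$ and $t(G_{i,j,\,s_j t_j})=t(H_{st})$ for all $i,j$, so the inner sum collapses: $t(G_i)=t(H)^n$ and $t(G_{i,\,pq})=t(H)^n\cdot n\,t(H_{st})/t(H)=n\,t(H)^{n-1}t(H_{st})$. Substituting into the outer formula, the product over $i$ contributes $\big(n\,t(H)^{n-1}t(H_{st})\big)^k$ and the sum contributes $k\cdot t(H)^n/\big(n\,t(H)^{n-1}t(H_{st})\big)$; collecting exponents of $t(H)$, $t(H_{st})$, $n$, and the factor $k$ should give the claimed $k\cdot n^{k-1}\,t(H)^{(n-1)(k-1)+n}\,t(H_{st})^{k-1}$.

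I do not anticipate a genuine obstacle here, since the result is a bookkeeping composition of three previously established theorems; the only delicate point is verifying that the two identifications in the construction match the hypotheses exactly, in particular confirming that $G_{i,\,pq}$ in the outer application really is the endpoint-identified graph $G_{i,\,p_1 p_{n_i+1}}=CG_{n_i}$ so that \thmref{thm span union4} applies verbatim. The main care will therefore be in the substitution and in tracking the exponents in the special case, where one must correctly combine $(n-1)k$ from the product with the extra $n$ from the sum's numerator to obtain the exponent $(n-1)(k-1)+n$ on $t(H)$.
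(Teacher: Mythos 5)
Your proposal is correct and matches the paper's own proof essentially verbatim: the paper likewise applies \thmref{thm span union3} to the outer banana structure, \thmref{thm span union4} to obtain $t(G_{i,\,pq})$ (identifying $p$ and $q$ turns the path-replacement graph $G_i$ into the cycle-replacement graph $CG_{n_i}$), and \thmref{thm span union1g} for $t(G_i)=\prod_{j=1}^{n_i}t(G_{i,j})$, then substitutes. Your exponent bookkeeping in the special case, $(n-1)k-(n-1)+n=(n-1)(k-1)+n$, is also correct.
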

\begin{proof}
By \thmref{thm span union3},
\begin{equation*}
\begin{split}
t(BG_k)= \prod_{i=1}^k t(G_{i, \, pq}) \sum_{i=1}^{k} \frac{t(G_i)}{t(G_{i, \, pq})},
\end{split}
\end{equation*}
and by \thmref{thm span union4} we have  
\begin{equation*}
\begin{split}
t(G_{i, \, pq})= \prod_{j=1}^{n_i} t(G_{i,j}) \sum_{j=1}^{n_i} \frac{t(G_{i, j, \, s_j t_j})}{t(G_{i,j})},
\end{split}
\end{equation*}
Moreover, by \thmref{thm span union1g}, $\displaystyle{t(G_i)=\prod_{j=1}^{n_i}t(G_{i, j})}$.
Then the result follows from these equalities.
\end{proof}
Here is an example for the graph considered in \corref{cor replace banana} with $k=3$, $n_1=1$, $n_2=2$, $n_3=3$.

\textbf{Example:} Let $BG_3$ be as in the first graph in \figref{fig bg}. Then $G_1$, $G_2$, $G_3$, $G_{2,1}$, $G_{2,2}$, $G_{3,1}$, $G_{3,2}$ and $G_{3,3}$ are illustrated in the same figure. Note that $t(G_1)=8$, $t(G_{1,pq})=8$, $t(G_{2,1})=2$, $t(G_{2,1,s_1t_1})=1$, 
$t(G_{2,2})=3$, $t(G_{2,2,s_2t_2})=2$,
$t(G_{3,1})=1$, $t(G_{3,1,s_1t_1})=4$,
$t(G_{3,2})=5$, $t(G_{3,2,s_2t_2})=6$,
$t(G_{3,3})=3$, $t(G_{3,3,s_3t_3})=1$. Thus, by \corref{cor replace banana} $t(BG_3)=9472$.

\begin{figure}
\centering
\includegraphics[scale=0.55]{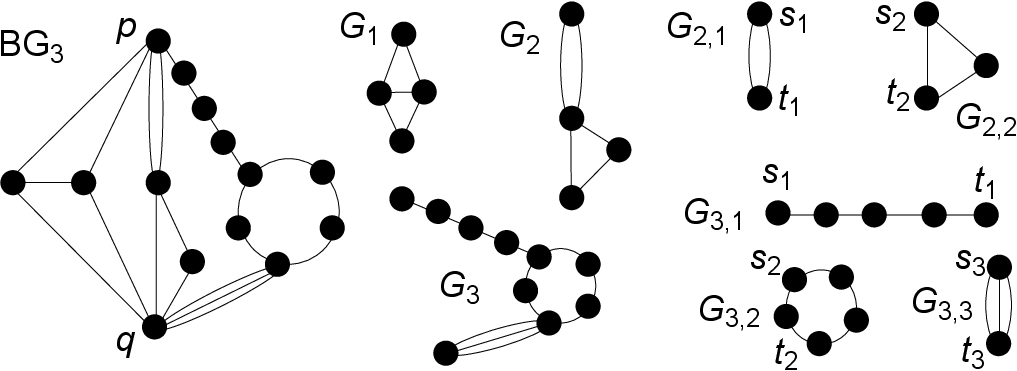} \caption{The graphs $BG_3$ and the related subgraphs.}\label{fig bg}
\end{figure}

The following result is the analogue of \thmref{thm res Euler} for the number of spanning trees. That is why we have the name.
\begin{theorem}[Euler's formula for the number of spanning trees]\label{thm span Euler}
Let $e_i$ be an edge of $G$ with end points $\pp$ and $\qq$. For any $s, \, t \in \vv{G}$, we have
\begin{equation*}\label{}
\begin{split}
t(G_{st})&= k \cdot t(G) + \frac{1}{4 t(G)}\sum_{\substack{e_i \in \ee{G} \\ \text{$e_i$ is not a bridge}}}\big [ t(G_{\pp s})-t(G_{\pp t}) -t(G_{\qq s})+t(G_{\qq t}) \big ]^2,
\end{split}
\end{equation*}
where $k$ is the number of bridges on any path connecting $s$ and $t$. Equivalently, we have
\begin{equation*}\label{}
\begin{split}
t(G_{st})&= \frac{1}{4 t(G)}\sum_{e_i \in \ee{G}}\big [ t(G_{\pp s})-t(G_{\pp t}) -t(G_{\qq s})+t(G_{\qq t}) \big ]^2,
\end{split}
\end{equation*}
\end{theorem}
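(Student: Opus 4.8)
The plan is to obtain both displayed identities from \thmref{thm res Euler second}, the resistance analogue, by converting every resistance value into a spanning-tree count via \thmref{thm res and spantree}. Recall that throughout this section every edge has length $\li=1$, so \thmref{thm res Euler second} reads
\[
r(s,t)=\frac14\sum_{e_i\in\ee{G}}\big[\,r(\pp,s)-r(\pp,t)-r(\qq,s)+r(\qq,t)\,\big]^2 .
\]
I would then substitute $r(p,q)=t(G_{pq})/t(G)$ for each of the resistance values occurring here. Every bracketed difference then carries a common factor $1/t(G)$, while $r(s,t)=t(G_{st})/t(G)$ on the left; pulling the resulting $1/t(G)^2$ out of each square and clearing the denominator by multiplying through by $t(G)$ converts the identity into
\[
t(G_{st})=\frac{1}{4t(G)}\sum_{e_i\in\ee{G}}\big[\,t(G_{\pp s})-t(G_{\pp t})-t(G_{\qq s})+t(G_{\qq t})\,\big]^2 ,
\]
which is exactly the second (``equivalently'') form. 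This step is a purely mechanical substitution.

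The substantive part is to split off the bridge contributions in order to recover the first form, and for this I would revisit the case analysis already performed in the proofs of \thmref{thm res Euler second} and \thmref{thm res derivative}. If $e_i$ is a bridge whose removal leaves $s$ and $t$ in the same component of $G-e_i$, then no current flows within that component, so $s$ and $t$ are at a common potential and $j_{\pp}(\qq,s)=j_{\pp}(\qq,t)$; by \eqref{eqn sym2} the bracket $r(\pp,s)-r(\pp,t)-r(\qq,s)+r(\qq,t)$ vanishes, and such an edge contributes nothing. If instead $e_i$ is a bridge separating $s$ from $t$, the computation in the proof of \thmref{thm res Euler second} gives $\tfrac14\big[r(\pp,s)-r(\pp,t)-r(\qq,s)+r(\qq,t)\big]^2=\li^2=1$; translated back through \thmref{thm res and spantree}, the corresponding spanning-tree term equals exactly $t(G)$. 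Summing over the $k$ bridges that separate $s$ and $t$---equivalently the bridges lying on any $s$--$t$ path, which is the set $B$ appearing in \thmref{thm res Euler}---produces the term $k\cdot t(G)$, and the surviving non-bridge edges give the stated sum.

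The main obstacle is bookkeeping rather than anything deep: one must check that the sign and normalization conventions line up, namely that a separating bridge contributes precisely $t(G)$ (and not some other multiple), and that $k$ is well defined as ``the number of bridges on any path connecting $s$ and $t$,'' independent of the chosen path. The latter holds because a bridge separates $s$ and $t$ if and only if it lies on \emph{every} $s$--$t$ path, so $k=|B|$ is path-independent. I would also record the degenerate case $s=t$: then $t(G_{ss})=0$ by the convention fixed just after \thmref{thm res and spantree}, every bracket collapses to $0$, and $k=0$, so both identities hold trivially.
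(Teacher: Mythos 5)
Your proposal is correct and follows essentially the paper's own route: both identities are obtained by translating the resistance Euler formulas (\thmref{thm res Euler} and \thmref{thm res Euler second}) into spanning-tree counts via \thmref{thm res and spantree}, with your use of \eqref{eqn sym2} playing the role of the paper's citation of \thmref{thm vol and spantree}. The only cosmetic difference is that you derive the first identity from the second by isolating bridge contributions---reusing the bridge case analysis already present in the paper's proof of \thmref{thm res Euler second} (a separating bridge contributes exactly $t(G)$ since its bracket squared equals $4t(G)^2$, while a non-separating bridge gives a vanishing bracket)---whereas the paper obtains it directly from \thmref{thm res Euler}; your bookkeeping, including the path-independence of $k$ and the degenerate case $s=t$, is accurate.
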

\begin{proof}
The first equality follows from \thmref{thm res Euler}, \thmref{thm res and spantree} and \thmref{thm vol and spantree}.
The second equality follows from \thmref{thm res Euler second} and \thmref{thm res and spantree}.
\end{proof}

\section{Spanning Trees and More Than Two Vertex Identification}\label{sec spanning2}

$G_{pq,st}$ means $s$ and $t$ are identified with each other in $G_{pq}$. 

$G_{pqs}$ means the vertices $p$, $q$ and $s$ are identified with each other in $G$.

In general, $G_{p_1p_2 \ldots p_i,q_1 q_2 \ldots q_j, \, \ldots, \, s_1 s_2 \ldots s_k}$ is used to denote a graph obtained from the graph $G$ by identifiying the set of vertices $\{ p_1, \, p_2, \, \ldots, \, p_i\}$ into a vertex $\bar{p}$, the set of vertices $\{ q_1, \, q_2, \, \ldots, \, q_j\}$ into a vertex $\bar{q}$ and so on.

The following identities are the main results of this section: 
\begin{theorem}\label{thm span vertex2and2}
For any $s, \, t, \, p, \, q \in \vv{G}$, we have
\begin{equation*}\label{}
\begin{split}
t(G) t(G_{pq,st})=t(G_{st})t(G_{pq}) - \frac{1}{4}\big[ t(G_{p s})-t(G_{q s}) -t(G_{p t})+t(G_{q t}) \big ]^2.
\end{split}
\end{equation*}
In particular, for any $s, \, p, \, q \in \vv{G}$, we have
\begin{equation*}\label{}
\begin{split}
t(G) t(G_{pqs})=t(G_{ps})t(G_{pq}) - \frac{1}{4}\big[ t(G_{p s})-t(G_{q s}) +t(G_{pq}) \big ]^2.
\end{split}
\end{equation*}
\end{theorem}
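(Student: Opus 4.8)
The plan is to reduce the identity to the Explicit Rayleigh's Shorting Law (\thmref{thm res cont1}) and then translate every resistance and voltage value into spanning-tree counts via \thmref{thm res and spantree} and \thmref{thm vol and spantree}. First I would apply \thmref{thm res cont1} with the four vertices $s,t,p,q$ to write $r(s,t)=r_{G_{pq}}(s,t)+\frac{1}{r(p,q)}\big[j_p(q,s)-j_p(q,t)\big]^2$.

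The key observation is that each term here has a clean spanning-tree expression. By \thmref{thm res and spantree} applied to $G$ we have $r(s,t)=t(G_{st})/t(G)$ and $r(p,q)=t(G_{pq})/t(G)$, while applying the same theorem to the contracted graph $G_{pq}$ (and noting $(G_{pq})_{st}=G_{pq,st}$) gives $r_{G_{pq}}(s,t)=t(G_{pq,st})/t(G_{pq})$. For the voltage difference, \thmref{thm vol and spantree} yields $j_p(q,s)-j_p(q,t)=\frac{1}{2t(G)}\big[t(G_{ps})-t(G_{qs})-t(G_{pt})+t(G_{qt})\big]$, where the two $t(G_{pq})$ contributions cancel; one could equally obtain this from \eqref{eqn sym2}.

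Substituting these into the Rayleigh identity and clearing denominators by multiplying through by $t(G)\,t(G_{pq})$ turns $\frac{1}{r(p,q)}$ into $t(G)/t(G_{pq})$ and collapses the squared voltage factor, leaving $t(G_{st})\,t(G_{pq})=t(G)\,t(G_{pq,st})+\frac14\big[t(G_{ps})-t(G_{qs})-t(G_{pt})+t(G_{qt})\big]^2$, which rearranges to the asserted equality. For the ``in particular'' statement I would set $t=p$ and invoke the conventions fixed earlier, namely $t(G_{pp})=0$, together with $t(G_{pt})=t(G_{pp})=0$, $t(G_{qt})=t(G_{qp})=t(G_{pq})$ and $t(G_{st})=t(G_{sp})=t(G_{ps})$, and observe that $G_{pq,st}=G_{pqs}$; these substitutions turn the general identity into the second displayed formula.

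There is no serious analytic obstacle: the argument is essentially a dictionary translation once \thmref{thm res cont1} is in hand. The only points requiring care are bookkeeping ones, namely confirming that \thmref{thm res and spantree} is applied to the correct ambient graph ($G$ versus $G_{pq}$), checking that the $t(G_{pq})$ terms genuinely cancel in the voltage difference, and using the degenerate-vertex conventions ($t(G_{pp})=0$) consistently when specializing to $t=p$. Implicitly one also assumes $p,q,s,t$ are distinct so that the relevant denominators are nonzero and the identified graphs remain connected.
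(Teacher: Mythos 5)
Your proposal is correct and follows exactly the paper's route: the paper proves the first identity by substituting \thmref{thm res and spantree} and \thmref{thm vol and spantree} into \thmref{thm res cont1} and clearing denominators, and obtains the second by setting $t=p$ with the convention $t(G_{pp})=0$ and $G_{pqs}=G_{pq,ps}$, just as you do. Your write-up merely makes explicit the bookkeeping (cancellation of the $t(G_{pq})$ terms in the voltage difference, applying \thmref{thm res and spantree} inside $G_{pq}$) that the paper leaves implicit.
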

\begin{proof}
The first equality follows from \thmref{thm res cont1}, \thmref{thm res and spantree} and \thmref{thm vol and spantree}.
If we set $t=p$ in the first equality and use our adoption that $t(G_{pp})=0$, we obtain the second equality since $G_{pqs}=G_{pq,ps}$.
\end{proof}
The results in \thmref{thm span vertex2and2} are analogues of \thmref{thm res cont1} and \thmref{thm res cont2}. 
The second equality in \thmref{thm span vertex2and2} can also be expressed in the following forms:
\begin{equation*}\label{}
\begin{split}
t(G) &= \frac{(a+b+c)^2-(a+b-c)^2-(a-b+c)^2-(-a+b+c)^2}{8t(G_{pqs})}\\
&= \frac{2a b+2bc+2ca-a^2-b^2-c^2}{4t(G_{pqs})},
\end{split}
\end{equation*}
where $ a=t(G_{p s})$, $b=t(G_{p q})$ and $c= t(G_{q s})$.

When we deal with the number of spanning trees, edge contraction can be seen as a special case of vertex identification because 
the self loops do not contribute to the number of spanning trees.
\begin{corollary}\label{cor span vertex contr}
Let $e_i$ be an edge of $G$ with end points $\pp$ and $\qq$. For any $s, \, t \in \vv{G}$, we have
\begin{equation*}\label{}
\begin{split}
 t(G) t(\overline{G}_{i,st})=t(G_{st}) t(\overline{G}_{i})  -\frac{1}{4}\big[ t(G_{p_i s})-t(G_{q_i s}) -t(G_{p_i t})+t(G_{q_i t}) \big ]^2.
\end{split}
\end{equation*}
\end{corollary}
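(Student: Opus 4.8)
The plan is to obtain this as a direct specialization of \thmref{thm span vertex2and2}, using the observation—emphasized in the paragraph preceding the corollary—that edge contraction and the identification of the two endpoints of that edge yield graphs with identical spanning-tree counts.

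First I would record the two translations between contraction and identification. Contracting $e_i$ produces $\overline{G}_i$ by merging $\pp$ and $\qq$ and discarding $e_i$, whereas forming $G_{\pp\qq}$ merges $\pp$ and $\qq$ while retaining $e_i$ as a self-loop. Since self-loops never lie in any spanning tree, the two graphs have the same spanning trees, so $t(\overline{G}_i)=t(G_{\pp\qq})$. The identical reasoning, applied after the further identification of $s$ and $t$, gives $t(\overline{G}_{i,st})=t(G_{\pp\qq,st})$; here one need only note that once $\pp$ and $\qq$ coincide the edge $e_i$ is a loop, and identifying the (possibly distinct) vertices $s$ and $t$ cannot turn that loop back into an ordinary edge, so the two operations commute and $e_i$ remains the sole discrepancy between $\overline{G}_{i,st}$ and $G_{\pp\qq,st}$.

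With these identities in hand, I would apply \thmref{thm span vertex2and2} with the choice $p=\pp$ and $q=\qq$, which reads
\begin{equation*}
t(G)\,t(G_{\pp\qq,st})=t(G_{st})\,t(G_{\pp\qq})-\frac{1}{4}\big[\,t(G_{\pp s})-t(G_{\qq s})-t(G_{\pp t})+t(G_{\qq t})\,\big]^2 .
\end{equation*}
Substituting $t(G_{\pp\qq})=t(\overline{G}_i)$ on the right-hand side and $t(G_{\pp\qq,st})=t(\overline{G}_{i,st})$ on the left-hand side turns this into exactly the claimed identity.

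I do not expect a genuine obstacle here: the substance is entirely contained in \thmref{thm span vertex2and2}, and the corollary amounts to a change of notation. The only point demanding a little care is the self-loop invariance argument together with the commutativity of the two identifications, which must be stated cleanly enough to cover the degenerate cases (for instance when $s$ or $t$ coincides with $\pp$ or $\qq$); the spanning-tree conventions fixed earlier, in particular $t(G_{pp})=0$, dispatch these automatically.
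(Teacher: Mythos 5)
Your proposal is correct and matches the paper's proof: the paper likewise derives the corollary by substituting $p=\pp$, $q=\qq$ into \thmref{thm span vertex2and2} and using $G_{\pp\qq}=\overline{G}_i$ and $G_{\pp\qq,st}=\overline{G}_{i,st}$ (the self-loop justification you spell out is exactly the remark the paper makes just before the corollary). Your extra care about the commutativity of the two identifications and the degenerate cases is a harmless elaboration of the same argument.
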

\begin{proof}
Since $G_{p_iq_i}=\overline{G}_{i}$ and $G_{p_iq_i,st}=\overline{G}_{i,st}$, the result follows from \thmref{thm span vertex2and2}. 
\end{proof}
\corref{cor span vertex contr} explains how contracting an edge effects the number of spanning trees. The following results is its analogue for edge deletions:
\begin{theorem}\label{thm span edge del}
Let $e_i$ be an edge of $G$ with end points $\pp$ and $\qq$. For any $s, \, t \in \vv{G}$, we have
\begin{equation*}\label{}
\begin{split}
t(G) t((G-e_i)_{st})=t(G_{st}) t(G-e_i)  + \frac{1}{4}\big[ t(G_{p_i s})-t(G_{q_i s}) -t(G_{p_i t})+t(G_{q_i t}) \big ]^2.
\end{split}
\end{equation*}
\end{theorem}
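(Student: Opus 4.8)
The plan is to mirror the derivation of \thmref{thm span vertex2and2}, feeding the Cutting Law \thmref{thm del1} into the spanning-tree dictionary in place of \thmref{thm res cont1}. First I would assume $e_i$ is not a bridge, so that \thmref{thm del1} applies, and start from its second form, $r(s,t)=r_{\ga-e_i}(s,t)-\frac{\li+\ri}{\li^2}\big(j_{\pp}(\qq,s)-j_{\pp}(\qq,t)\big)^2$. Because every edge length equals $1$, the coefficient simplifies to $\frac{\li+\ri}{\li^2}=1+\ri=\frac{t(G-e_i)+t(\overline{G}_i)}{t(G-e_i)}=\frac{t(G)}{t(G-e_i)}$, using \eqref{eqn ri and tgi} together with the deletion--contraction identity \thmref{thm spantree cont-del}.

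Next I would translate the remaining ingredients through \thmref{thm res and spantree} and \thmref{thm vol and spantree}, namely $r(s,t)=\frac{t(G_{st})}{t(G)}$, $r_{\ga-e_i}(s,t)=\frac{t((G-e_i)_{st})}{t(G-e_i)}$, and $j_{\pp}(\qq,s)-j_{\pp}(\qq,t)=\frac{1}{2t(G)}\big[t(G_{p_i s})-t(G_{q_i s})-t(G_{p_i t})+t(G_{q_i t})\big]$, where the two copies of $t(G_{p_iq_i})$ cancel in the voltage difference. Substituting these and then clearing denominators by multiplying through by $t(G)\,t(G-e_i)$ should collapse directly to the claimed identity: the powers of $t(G)$ coming from the squared voltage difference and from the coefficient $\frac{t(G)}{t(G-e_i)}$ are arranged so that, after clearing denominators, exactly the constant $\frac14$ survives, and the Cutting Law's minus sign becomes the plus sign of the statement. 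This is the same bookkeeping that produced \thmref{thm span vertex2and2}.

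As a cross-check, and to handle degenerate configurations, I would give a second derivation from results already proved. Splitting $G_{st}$ along $e_i$ gives $t((G-e_i)_{st})=t(G_{st})-t(\overline{G}_{i,st})$, valid since $(G-e_i)_{st}=G_{st}-e_i$ and $\overline{(G_{st})_i}=\overline{G}_{i,st}$ and $e_i$ remains a non-bridge after identifying $s$ and $t$; multiplying by $t(G)$ and inserting \corref{cor span vertex contr} turns the right-hand side into $t(G_{st})\big(t(G)-t(\overline{G}_i)\big)+\frac14\big[t(G_{p_i s})-t(G_{q_i s})-t(G_{p_i t})+t(G_{q_i t})\big]^2$, which equals $t(G_{st})\,t(G-e_i)+\frac14[\cdots]^2$ by \thmref{thm spantree cont-del} once more. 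The step I expect to be the genuine obstacle, rather than this algebra, is the excluded bridge case: since \thmref{thm del1} requires $e_i$ to be a non-bridge, for a bridge I would argue separately via $t(G-e_i)=0$ and the convention for disconnected graphs, checking that both sides agree according to whether $s$ and $t$ lie in the same component of $\ga-e_i$. I would likewise verify the boundary configurations in which $s$ or $t$ coincides with $\pp$ or $\qq$, where the conventions $t(G_{pp})=0$ and $\ee{G}=\ee{G_{st}}$ from this section are needed.
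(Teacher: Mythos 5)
Your proposal is correct and takes essentially the same approach as the paper, which likewise gives both routes: the primary derivation feeding \thmref{thm del1} into \thmref{thm res and spantree} (with the voltage dictionary of \thmref{thm vol and spantree}), and the alternative via $(G-e_i)_{st}=G_{st}-e_i$, deletion--contraction applied to $G$ and $G_{st}$, and \corref{cor span vertex contr}. Your explicit handling of the bridge case (where $t(G-e_i)=0$ and one checks the bracket equals $0$ or $\pm 2t(G)$ according to whether $s$ and $t$ lie in the same component of $G-e_i$) is a worthwhile supplement, since the paper's cited results assume $e_i$ is not a bridge and leave that case implicit.
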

\begin{proof}
The result follows from \thmref{thm del1} and \thmref{thm res and spantree}.

Alternatively, since $(G-e_i)_{st})=G_{st}-e_i$ and $\overline{G}_{i,st}=\overline{(G_{st})}_{i}$, the result follows by applying \thmref{thm spantree cont-del} to both $G$ and $G_{st}$,  and then using the result in \corref{cor span vertex contr}. 
\end{proof}

Next, we consider the graphs with vertex connectivity $3$ and obtain a formula for the number of spanning trees.
\begin{theorem}\label{thm span union v3}
Let $G=G_1 \cup G_2$ with $G_1 \cap G_2 =\{ p, \, q, \,s \}$ for any vertices $p$, $q$ and $s$ of $G$. Then
\begin{equation*}
\begin{split}
t(G)=&t(G_1) \cdot t(G_{2,pqs})+t(G_2) \cdot t(G_{1,pqs})\\
& \quad +\frac{1}{2}\Big[ a_1 (-a_2+b_2+c_2)+b_1(a_2-b_2+c_2)+c_1(a_2+b_2-c_2)   \Big], 
\end{split}
\end{equation*}
where $ a_i=t(G_{i,p s})$, $b_i=t(G_{i,p q})$ and $c_i= t(G_{i,q s})$ for any $i \in \{ 1, \, 2 \}$.

In particular, if $G_1=G_2$ and the same three vertices $p$, $q$ and $s$ are used in both copy of $G_1$, then we have
\begin{equation*}
\begin{split}
t(G)=4t(G_1) \cdot t(G_{1,pqs}).
\end{split}
\end{equation*}
\end{theorem}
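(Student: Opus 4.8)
The plan is to pass to the electrical network picture and exploit the fact that two networks sharing only the terminals $p$, $q$, $s$ combine in parallel. First I would choose a graph model and apply circuit reductions to replace each $G_i$ by an electrically equivalent triangle (a $\Delta$-network) on the three common vertices $p$, $q$, $s$, carrying conductances $x_i$, $y_i$, $z_i$ on the sides $qs$, $ps$, $pq$ respectively. Because $G_1\cap G_2=\{p,q,s\}$, the graph $G$ is electrically equivalent on $\{p,q,s\}$ to the single triangle whose side conductances are the edgewise sums $x_1+x_2$, $y_1+y_2$, $z_1+z_2$; this is just the rule that parallel edges add conductances, and it is the three--terminal analogue of the parallel reduction already used in \thmref{thm span union2} and \thmref{thm span union3}.

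The bridge back to spanning trees is the scaling relation $t(H)=t(H_{pqs})\,\kappa_H$, valid for any graph $H$ with three marked vertices, where $\kappa_H=x_Hy_H+y_Hz_H+z_Hx_H$ is the weighted spanning--tree number of its reduced triangle. I would establish this either from the all--minors Matrix--Tree theorem (the count $t(H_{pqs})$ equals the principal minor of the Laplacian of $H$ obtained by deleting the three rows and columns indexed by $p$, $q$, $s$, so the Schur--complement determinant identity yields $t(H)=t(H_{pqs})\cdot\kappa_H$), or internally from \thmref{thm span vertex2and2}, which gives $t(H)\,t(H_{pqs})=\tfrac14\big(2ab+2bc+2ca-a^2-b^2-c^2\big)$ with $a=t(H_{ps})$, $b=t(H_{pq})$, $c=t(H_{qs})$, and this is exactly $\kappa_H\,t(H_{pqs})^2$. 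Writing $d_i=t(G_{i,pqs})$ and using \thmref{thm res and spantree} together with the triangle resistance formulas $r_{G_i}(p,q)=\tfrac{x_i+y_i}{\kappa_i}$, $r_{G_i}(p,s)=\tfrac{x_i+z_i}{\kappa_i}$, $r_{G_i}(q,s)=\tfrac{y_i+z_i}{\kappa_i}$, I would then solve the conductance dictionary
\[
x_i=\frac{a_i+b_i-c_i}{2d_i},\qquad y_i=\frac{-a_i+b_i+c_i}{2d_i},\qquad z_i=\frac{a_i-b_i+c_i}{2d_i}.
\]

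Finally I would assemble the answer. Identifying $p$, $q$, $s$ in $G$ creates a single cut vertex separating the $G_1$-- and $G_2$--parts, so \thmref{thm span union1} gives $t(G_{pqs})=d_1d_2$. Applying the scaling relation to $G$ with the combined conductances gives $t(G)=d_1d_2\big[(x_1+x_2)(y_1+y_2)+(y_1+y_2)(z_1+z_2)+(z_1+z_2)(x_1+x_2)\big]$. Expanding, the pure index-$1$ and index-$2$ terms reassemble into $d_1d_2(\kappa_1+\kappa_2)=t(G_1)d_2+t(G_2)d_1$, while the six cross terms, after clearing the factor $4d_1d_2$ and writing $X_i=a_i+b_i-c_i$, $Y_i=-a_i+b_i+c_i$, $Z_i=a_i-b_i+c_i$, collapse by a clean term--by--term match to $\tfrac12\big[a_1(-a_2+b_2+c_2)+b_1(a_2-b_2+c_2)+c_1(a_2+b_2-c_2)\big]$ (the bracket being precisely $a_1Y_2+b_1Z_2+c_1X_2$). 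The special case $G_1=G_2$ then follows by substituting equal values and invoking \thmref{thm span vertex2and2} once more to recognize $2ab+2bc+2ca-a^2-b^2-c^2=4\,t(G_1)t(G_{1,pqs})$. I expect the main obstacle to be the scaling relation $t(H)=t(H_{pqs})\,\kappa_H$ linking the absolute tree count to the reduced triangle; once that is in hand, the parallel combination of conductances, the cut--vertex product, and the closing algebra are all routine.
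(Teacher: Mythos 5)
Your proof is correct, and it takes a genuinely different route from the paper's. The paper never leaves its spanning-tree calculus: it applies the second identity of \thmref{thm span vertex2and2} to $G$ itself, expands each of $t(G_{ps})$, $t(G_{pq})$, $t(G_{qs})$ via the two-vertex gluing formula of \thmref{thm span union2} (each of $G_{ps}$, $G_{pq}$, $G_{qs}$ decomposes along just two vertices, e.g.\ $\{ps,\,q\}$), uses $t(G_{pqs})=t(G_{1,pqs})\,t(G_{2,pqs})$ from \thmref{thm span union1} exactly as you do, and finishes by algebra. You instead introduce a key lemma absent from the paper, the scaling relation $t(H)=t(H_{pqs})\,\kappa_H$ with $\kappa_H=x_Hy_H+y_Hz_H+z_Hx_H$ the weighted tree count of the reduced triangle, proved by the all-minors matrix-tree theorem plus a Schur-complement factorization, and you combine the two pieces by adding response matrices (the three-terminal parallel law). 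I checked your closing algebra: with $X_i=a_i+b_i-c_i$, $Y_i=-a_i+b_i+c_i$, $Z_i=a_i-b_i+c_i$, the six cross terms contribute $\frac14\bigl[2c_2X_1+2a_2Y_1+2b_2Z_1\bigr]$, which expands to the same nine monomials as the stated bracket $\frac12\bigl[a_1Y_2+b_1Z_2+c_1X_2\bigr]$, and the special case closes via the displayed consequence of \thmref{thm span vertex2and2} as you say. What each approach buys: yours generalizes immediately --- the scaling relation holds for any terminal set $W$, so the same argument yields gluing formulas along $k$ common vertices and explains structurally why the correction term consists exactly of the ``mixed'' spanning trees of the parallel triangle --- while the paper's proof is self-contained, invoking only identities already established in the text and no matrix-theoretic input.

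One caution on a secondary point: your ``internal'' alternative justification of the scaling relation is circular as written. The dictionary $x_i=(a_i+b_i-c_i)/(2d_i)$ already presupposes $t(G_i)=\kappa_i d_i$, since \thmref{thm res and spantree} together with the triangle resistances only gives $x_i+z_i=a_i\kappa_i/t(G_i)$ and its cyclic companions. The fix is easy: substitute $x_i=\kappa_i(a_i+b_i-c_i)/(2t(G_i))$ (and cyclically) into $\kappa_i=x_iy_i+y_iz_i+z_ix_i$, invoke the quadratic identity of \thmref{thm span vertex2and2} to recognize the resulting bracket as $4\,t(G_i)\,d_i$, and conclude $\kappa_i=\kappa_i^2 d_i/t(G_i)$, hence $t(G_i)=\kappa_i d_i$, using $\kappa_i>0$ for a connected network with three distinct terminals. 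Since your primary Schur-complement proof of the lemma is sound, this wrinkle does not affect the validity of the argument.
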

\begin{proof}
We have $t(G_{pqs})=t(G_{1,pqs}) \cdot t(G_{2,pqs})$ by \thmref{thm span union1} as the identified vertex $pqs$ is a cut vertex in $G_{pqs}$. 
On the other hand, by \thmref{thm span union2} we have
\begin{equation*}
\begin{split}
t(G_{ps})&=t(G_{1,ps}) t(G_{2,pqs})+t(G_{2,ps})t(G_{1,pqs}),\\ 
t(G_{pq})&=t(G_{1,pq}) t(G_{2,pqs})+t(G_{2,pq})t(G_{1,pqs}), \quad \text{and}\\ 
t(G_{qs})&=t(G_{1,qs}) t(G_{2,pqs})+t(G_{2,qs})t(G_{1,pqs}).
\end{split}
\end{equation*}
Then the first equality in the theorem follows from the second equality in \thmref{thm span vertex2and2}.

The second equality in the theorem follows from the first one.
\end{proof}

\textbf{Example:} Let $G$ be as in the first graph in \figref{fig pqs}. Then $G_1$ and $G_{1,pqs}$ are as illustrated in the same figure. Note that $t(G_1)=27$, $t(G_{1,pqs})=45$, 
Thus, we have $t(G)=4t(G_1) \cdot t(G_{1,pqs})=4860$ by the second equality in \thmref{thm span union v3}.

\begin{figure}
\centering
\includegraphics[scale=0.55]{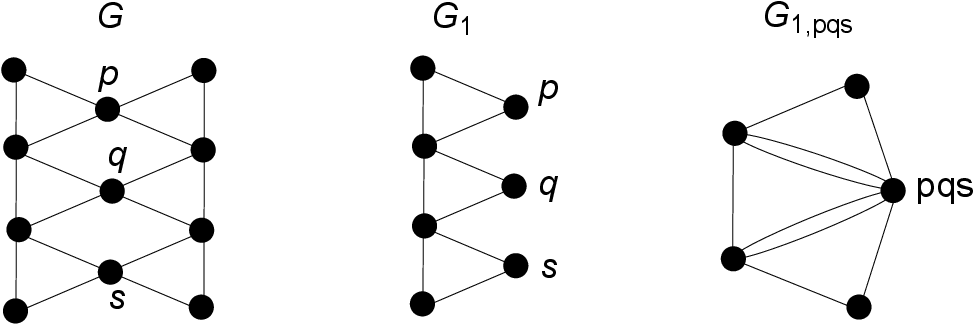} \caption{$G$ and the related graphs.}\label{fig pqs}
\end{figure}

\begin{theorem}\label{thm span vertex del1}
Let $p$, $q$, $s$ and $u$ be some vertices of a graph $G$, and let $u$ be adjacent to the vertices $p$, $q$ and $s$ via by $a$, $b$ and $c$ number of edges, respectively. If $u$ is not a cut vertex, we have
\begin{equation*}
\begin{split}
t(G)=(a+b+c) t(H)+ a c \, t(H_{ps})+ a b \, t(H_{pq})+ b c \, t(H_{qs}) + a b c \, t(H_{pqs}), 
\end{split}
\end{equation*}
where $H$ is the graph obtained from $G$ by deleting the vertex $u$.
\end{theorem}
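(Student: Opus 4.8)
\textbf{Proof proposal for Theorem~\ref{thm span vertex del1}.}
The plan is to realize $G$ as a union along the three-vertex cut set $\{p,q,s\}$ and apply \thmref{thm span union v3}. Since the vertex $u$ is adjacent only to $p$, $q$, and $s$ (via $a$, $b$, and $c$ edges respectively) and is not a cut vertex, I would set $G_1=H$ (the graph obtained by deleting $u$) and let $G_2$ be the small ``star'' graph whose only vertices are $p,q,s,u$, with $a$ edges joining $u$ to $p$, $b$ edges joining $u$ to $q$, and $c$ edges joining $u$ to $s$. Then $G=G_1\cup G_2$ with $G_1\cap G_2=\{p,q,s\}$, exactly the setting of \thmref{thm span union v3}.

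The substantive work is the bookkeeping for $G_2$. First I would compute $t(G_2)$: a spanning tree of $G_2$ must connect $u$ to each of $p,q,s$, and since in $G_2$ the only way to reach $p$, $q$, $s$ is directly through $u$, a spanning tree consists of choosing exactly one of the $a$ edges to $p$, one of the $b$ edges to $q$, and one of the $c$ edges to $s$, giving $t(G_2)=abc$. Next I would compute the various contractions of $G_2$. Identifying $p$ and $s$ (forming $G_{2,ps}$) merges those two terminals, so that $u$ is now joined to the merged vertex by $a+c$ edges and to $q$ by $b$ edges; this is a banana-type configuration, and counting spanning trees gives $a_2:=t(G_{2,ps})=(a+c)b$. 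Symmetrically $b_2:=t(G_{2,pq})=(a+b)c$ and $c_2:=t(G_{2,qs})=(b+c)a$, where I follow the index conventions $a_i=t(G_{i,ps})$, $b_i=t(G_{i,pq})$, $c_i=t(G_{i,qs})$ of \thmref{thm span union v3}. Finally, collapsing all three terminals (forming $G_{2,pqs}$) leaves $u$ joined to a single vertex by $a+b+c$ parallel edges, whence $t(G_{2,pqs})=a+b+c$ by \thmref{thm spantree examples}.

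With these values in hand I would substitute into the first equality of \thmref{thm span union v3}. Writing $G_1=H$, the terms $t(G_{1,ps})$, $t(G_{1,pq})$, $t(G_{1,qs})$ become $t(H_{ps})$, $t(H_{pq})$, $t(H_{qs})$, while $t(G_1)=t(H)$ and $t(G_{1,pqs})=t(H_{pqs})$. The formula then reads
\begin{equation*}
\begin{split}
t(G)=&\,t(H)\,(a+b+c)+t(G_2)\,t(H_{pqs})\\
&+\tfrac{1}{2}\Big[ t(H_{ps})(-a_2+b_2+c_2)+t(H_{pq})(a_2-b_2+c_2)+t(H_{qs})(a_2+b_2-c_2)\Big].
\end{split}
\end{equation*}
The main obstacle — really the only nontrivial step — is verifying that the combinations $-a_2+b_2+c_2$, $a_2-b_2+c_2$, $a_2+b_2-c_2$ collapse to the clean coefficients claimed. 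Plugging in $a_2=(a+c)b$, $b_2=(a+b)c$, $c_2=(b+c)a$ and simplifying, one finds for instance $-a_2+b_2+c_2=-(a+c)b+(a+b)c+(b+c)a=2ac$, and similarly $a_2-b_2+c_2=2ab$ and $a_2+b_2-c_2=2bc$. The factor $\tfrac12$ then cancels, producing coefficients $ac$, $ab$, $bc$ on $t(H_{ps})$, $t(H_{pq})$, $t(H_{qs})$ respectively. Combined with $t(G_2)t(H_{pqs})=abc\,t(H_{pqs})$ and the $(a+b+c)t(H)$ term, this yields exactly the stated identity.
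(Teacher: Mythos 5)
Your proposal is correct and takes essentially the same approach as the paper's own proof: decompose $G=H\cup G_2$ along the cut set $\{p,q,s\}$ with $G_2$ the multistar on $\{p,q,s,u\}$, compute $t(G_2)=abc$, $t(G_{2,ps})=b(a+c)$, $t(G_{2,pq})=c(a+b)$, $t(G_{2,qs})=a(b+c)$, $t(G_{2,pqs})=a+b+c$, and invoke \thmref{thm span union v3}. The only difference is cosmetic: you carry out the final simplification (e.g.\ $-a_2+b_2+c_2=2ac$) explicitly, which the paper leaves to the reader.
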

\begin{proof}
Note that the deletion of a vertex requires also the deletions of all edges that are adjacent to that vertex. Since $u$ is not a cut vertex, $H$ is connected.
We take $G_1=H$ and $G_2$ as the graph with only the vertices $p$, $q$, $s$ and $u$ such that there are $a$ edges between $p$ and $u$, $b$ edges between $q$ and $u$, and $c$ edges connecting $s$ and $u$. Then $G=G_1 \cup G_2$ with 
$G_1 \cap G_2 =\{ p, \, q, \,s \}$.
Since $t(G_2)=a b c$, $t(G_{2,ps})=b (a+ c)$, $t(G_{2,pq})=c(a+ b)$, $t(G_{2,qs})=a(b+c)$, and $t(G_{2,pqs})=a+ b+c$, the result follows from \thmref{thm span union v3}.
\end{proof}

\textbf{Example:} Let $G=G_1 \cup G_2$ be as in the first graph in \figref{fig vdel1}, and let $G_1=G-u$, $G_{1,pqs}$ and $G_2$ be as illustrated in the same figure. Note that $t(G_1)=4$, $t(G_{1,pqs})=2$, $t(G_{1,ps})=4$, $t(G_{1,pq})=3$ and $t(G_{1,qs})=3$.
Thus, $t(G)=4(a+b+c)+ 4a c+ 3a b+ 3b c +2 a b c$ by \thmref{thm span vertex del1}, and we have $a=2$, $b=3$ and $c=1$ in this case. Hence, $t(G)=71$.

\begin{figure}
\centering
\includegraphics[scale=0.55]{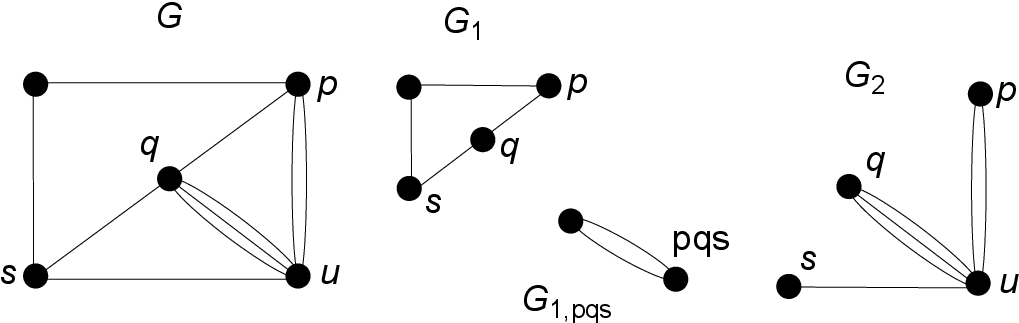} \caption{The graphs $G=G_1 \cup G_2$, $G_1=G-u$, $G_{1,pqs}$ and $G_2$.}\label{fig vdel1}
\end{figure}

We notice that \thmref{thm span vertex del0} and \thmref{thm span vertex del1} show some pattern, and this pattern is indeed also valid for a vertex $u$ that is not a cut vertex and adjacent to $4$ vertices. We use the notation $G-u$ to denote the graph obtained from $G$ by deleting the vertex $u$ and so by deleting the edges adjacent to $u$. When $u$ is not a cut vertex in $G$, $G-u$ is connected.
\begin{theorem}\label{thm span vertex del2}
Let $p_1$, $p_2$, $p_3$, $p_4$ and $u$ be some vertices of a graph $G$, and let $u$ be adjacent to the vertex $p_i$ via by $a_i$ number of edges for each $i \in \{1, \, 2, \, 3, \, 4 \}$. If $u$ is not a cut vertex, then for $G$ and $H=G-u$ we have
\begin{equation*}
\begin{split}
t(G)&=(a_1+a_2+a_3+a_4) t(H)+ a_1 a_2 \, t(H_{p_1p_2})+ a_1 a_3 \, t(H_{p_1p_3})+ a_1 a_4 \, t(H_{p_1p_4}) \\ 
& \quad + a_2 a_3 \, t(H_{p_2p_3}) +a_2 a_4 \, t(H_{p_2p_4})+a_3 a_4 \, t(H_{p_3p_4})+ a_1 a_2 a_3 \, t(H_{p_1p_2p_3}) \\
& \quad +a_1 a_2 a_4 \, t(H_{p_1p_2p_4})+a_1 a_3 a_4 \, t(H_{p_1p_3p_4})
 +a_2 a_3 a_4 \, t(H_{p_2p_3p_4})+a_1 a_2 a_3 a_4 \, t(H_{p_1p_2p_3p_4}).
\end{split}
\end{equation*}
\end{theorem}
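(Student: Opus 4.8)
The plan is to prove the identity by a direct combinatorial classification of the spanning trees of $G$ according to how the vertex $u$ attaches to the rest of the graph, rather than by iterating a union-along-vertices formula. The paper derives \thmref{thm span vertex del1} from the vertex-connectivity-$3$ identity \thmref{thm span union v3}, but no analogous vertex-connectivity-$4$ union identity has been recorded, so a self-contained count is cleanest here. First I would record that, since $u$ is not a cut vertex, $H=G-u$ is connected; hence every contraction $t(H_{\cdots})$ on the right-hand side is the spanning-tree count of a genuine connected graph (and $t(H)\geq 1$).

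Next I would classify each spanning tree $T$ of $G$ by the set $S\subseteq\{1,2,3,4\}$ of indices $i$ for which $T$ uses an edge joining $u$ to $p_i$. Because the $a_i$ edges between $u$ and $p_i$ are parallel, a spanning tree uses at most one of them, so $T$ uses exactly one edge to $p_i$ for each $i\in S$ and none for $i\notin S$; moreover $S\neq\emptyset$ since $u$ must be connected. For a fixed $S$ the number of choices of the parallel edges is $\prod_{i\in S}a_i$, and the remaining edges of $T$ all lie in $H$.

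The key step is to count how the $H$-edges complete $T$. With $u$ joined to $\{p_i : i\in S\}$, I would show that these edges must form a spanning forest $F$ of $H$ with exactly $|S|$ components, each containing exactly one vertex $p_i$ with $i\in S$: connectivity of $T$ forces every component of $F$ to meet $\{p_i : i\in S\}$, acyclicity forbids two such vertices from lying in one component, and the edge count $|V(H)|-|S|$ pins the number of components at $|S|$. The main obstacle is then the counting lemma that the number of such forests equals $t\!\left(H_{\{p_i : i\in S\}}\right)$, the spanning-tree count of the graph obtained from $H$ by identifying all vertices $\{p_i : i\in S\}$. I would establish this by the standard bijection: identifying the terminals sends such a forest to a spanning tree of the quotient (edges among terminals become irrelevant self-loops), and conversely un-identifying a spanning tree of the quotient returns a forest of the required shape; one checks the correspondence is well defined by verifying that acyclicity and connectivity are preserved in both directions.

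Finally I would sum over all nonempty $S$, obtaining $t(G)=\sum_{\emptyset\neq S\subseteq\{1,2,3,4\}}\big(\prod_{i\in S}a_i\big)\,t\!\left(H_{\{p_i : i\in S\}}\right)$. Since identifying a single vertex does nothing, the four size-one subsets contribute $(a_1+a_2+a_3+a_4)\,t(H)$, and expanding the six size-two, four size-three, and single size-four subsets reproduces exactly the displayed formula. As a consistency check one may specialize to $a_4=0$, where all terms containing $a_4$ drop out and the identity collapses to \thmref{thm span vertex del1}; this specialization also suggests an alternative inductive proof peeling off one $u$--$p_4$ edge at a time via \thmref{thm spantree cont-del}, though the contraction branch there produces a messier subgraph, so I would prefer the direct count above.
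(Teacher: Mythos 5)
Your proposal is correct, but it takes a genuinely different route from the paper's. The paper proves this case algebraically: it applies the second identity of \thmref{thm span vertex2and2} to write $t(G)$ in terms of $t(G_{p_1p_2})$, $t(G_{p_1p_3})$, $t(G_{p_2p_3})$ and $t(G_{p_1p_2p_3})$, expands each of these via the lower cases \thmref{thm span vertex del0} and \thmref{thm span vertex del1} together with further applications of \thmref{thm span vertex2and2} to $H$ and $H_{p_1p_2}$, and then checks that the difference of the two sides vanishes as a polynomial identity by a Mathematica computation. You instead count spanning trees of $G$ directly, stratifying by the set $S$ of indices $i$ for which the tree uses a $u$--$p_i$ edge, and invoking the bijection between spanning forests of $H$ with exactly one component per terminal in $\{p_i : i \in S\}$ and spanning trees of the quotient $H_S$; the steps you flag (at most one parallel edge per pair, the component structure of $T-u$, preservation of acyclicity and connectivity in both directions of the bijection) all check out, including in the multigraph setting, since the correspondence is edge-set based and edges of $H$ joining two terminals become self-loops in $H_S$ that occur in neither count. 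Your argument is more elementary --- it needs neither the resistance-theoretic identity \thmref{thm span vertex2and2} nor computer algebra --- and it is strictly more general: the formula $t(G)=\sum_{\emptyset\neq S}\bigl(\prod_{i\in S}a_i\bigr)\,t(H_S)$ you arrive at is exactly \thmref{thm span vertex del general} for arbitrary $|N_G(u)|$, so your method would subsume the paper's later induction via \thmref{thm span vertex del general0} and \thmref{thm spantree cont-del} in one stroke. What the paper's route buys is thematic coherence: it exhibits the four-neighbor case as a consequence of the explicit Rayleigh-type identities that are the paper's main subject, whereas your count, while cleaner, makes no use of them. One small point you should make explicit: the hypothesis is that $N_G(u)=\{p_1,p_2,p_3,p_4\}$, i.e., these are \emph{all} neighbors of $u$, since your stratification requires every tree edge at $u$ to go to one of the $p_i$; this matches the paper's intent and the formulation of \thmref{thm span vertex del general}.
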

\begin{proof}
By the second equality in \thmref{thm span vertex2and2} we have
\begin{equation}\label{eqn span vertex del2a}
\begin{split}
t(G)&=\frac{1}{G_{p_1 p_2 p_3}} \Big[ t(G_{p_1 p_2}) t(G_{p_1 p_3})-\frac{1}{4} \big( t(G_{p_1 p_2})+ t(G_{p_1 p_3}) -t(G_{p_2 p_3} ) \big)^2  \Big].
\end{split}
\end{equation}
Let $A$ and $B$ denote the right hand sides of the equalities in \eqref{eqn span vertex del2a} and \thmref{thm span vertex del2}, respectively. We prove the theorem by showing $A-B=0$. 

We set $h_{ij}=t(H_{p_ip_j})$ and $h_{ijk}=t(H_{p_ip_jp_k})$, where $i$, $j$ and $k$ are elements from $\{1, \,  2, \, 3, \, 4  \}$. We also set $h=t(H)$, $h_{12, \, 34}=t(H_{p_1p_2, \, p_3p_4})$, $h_{13, \, 24}=t(H_{p_1p_3, \, p_2p_4})$, $h_{14, \, 23}=t(H_{p_1p_4, \, p_2p_3})$ and $h_{1234}=t(H_{p_1p_2p_3p_4})$.

To show that $A-B=0$, we express each term in $A$ and $B$ in terms of $h$ and $h_{ij}$'s.

Since $G_{p_1p_2p_3}-u=H_{p_1p_2p_3}$ and $(G_{p_1p_2p_3}-u)_{p_1p_2p_3p_4}=H_{p_1p_2p_3p_4}$,  we use \thmref{thm span vertex del0} to derive
\begin{equation}\label{eqn span vertex del2b1}
\begin{split}
t(G_{p_1p_2p_3})=(a_1+a_2+a_3+a_4) h_{123}+(a_1+a_2+a_3)a_4 \, h_{1234}.
\end{split}
\end{equation}
Since $G_{p_1p_2}-u=H_{p_1p_2}$, $(G_{p_1p_2}-u)_{p_1p_2p_3}=H_{p_1p_2p_3}$, $(G_{p_1p_2}-u)_{p_1p_2p_4}=H_{p_1p_2p_4}$,
$(G_{p_1p_2}-u)_{p_3p_4}=H_{p_1p_2, \, p_3p_4}$ and  $(G_{p_1p_2}-u)_{p_1p_2p_3p_4}=H_{p_1p_2p_3p_4}$  we use \thmref{thm span vertex del1} to derive
\begin{equation}\label{eqn span vertex del2b2}
\begin{split}
t(G_{p_1p_2})&=(a_1+a_2+a_3+a_4) h_{12}+a_3(a_1+a_2) h_{123}+a_4(a_1+a_2) h_{124}+a_1a_4 h_{12, \, 34}\\
& \quad +a_3 a_4(a_1+a_2) h_{1234}.
\end{split}
\end{equation}
Similarly, we have
\begin{equation}\label{eqn span vertex del2b3}
\begin{split}
t(G_{p_1p_3})&=(a_1+a_2+a_3+a_4) h_{13}+a_2(a_1+a_3) h_{123}+a_4(a_1+a_3) h_{134}+a_2a_4 \, h_{13,\, 24}\\
& \quad +a_2 a_4(a_1+a_3) h_{1234},\\
t(G_{p_2p_3})&=(a_1+a_2+a_3+a_4) h_{23}+a_1(a_2+a_3) h_{123}+a_4(a_2+a_3) h_{234}+a_1a_4 \, h_{14,\, 23}\\
& \quad +a_1 a_4 (a_2+a_3) h_{1234}.
\end{split}
\end{equation}
Applying the second equality in \thmref{thm span vertex2and2} to $H$ and $H_{p_1p_2}$ gives
\begin{equation}\label{eqn span vertex del2b4}
\begin{split}
h_{123}&=\frac{1}{h}\Big[h_{12} h_{13}-\frac{1}{4} \big( h_{12}+ h_{13}-h_{23} \big)^2 \Big],\\
h_{124}&=\frac{1}{h}\Big[h_{12} h_{14}-\frac{1}{4} \big( h_{12}+ h_{14}-h_{24} \big)^2 \Big],\\
h_{134}&=\frac{1}{h}\Big[h_{13} h_{14}-\frac{1}{4} \big( h_{13}+ h_{14}-h_{34} \big)^2 \Big],\\
h_{234}&=\frac{1}{h}\Big[h_{23} h_{24}-\frac{1}{4} \big( h_{23}+ h_{24}-h_{34} \big)^2 \Big],\\
h_{1234}&=\frac{1}{h_{12}}\Big[h_{123} h_{124}-\frac{1}{4} \big( h_{123}+ h_{124}-h_{12, \, 34} \big)^2 \Big].
\end{split}
\end{equation}
On the other hand, applying the first equality in \thmref{thm span vertex2and2} to $H$ gives
\begin{equation}\label{eqn span vertex del2b5}
\begin{split}
h_{12,\, 34}&=\frac{1}{h}\Big[h_{12} h_{34}-\frac{1}{4} \big( h_{13}-h_{23}-h_{14}+h_{24} \big)^2 \Big],\\
h_{13,\, 24}&=\frac{1}{h}\Big[h_{13} h_{24}-\frac{1}{4} \big( h_{12}-h_{23}-h_{14}+h_{34} \big)^2 \Big],\\
h_{14,\, 23}&=\frac{1}{h}\Big[h_{14} h_{23}-\frac{1}{4} \big( h_{12}-h_{13}-h_{24}+h_{34} \big)^2 \Big].\\
\end{split}
\end{equation}
Finally, substituting the values in \eqref{eqn span vertex del2b1}, \eqref{eqn span vertex del2b2}, \eqref{eqn span vertex del2b3}, \eqref{eqn span vertex del2b4} and \eqref{eqn span vertex del2b5} in $A-B$ and doing the algebra by \cite{MMA} gives $0$.
This completes the proof.
\end{proof}
The pattern observed in \thmref{thm span vertex del0}, \thmref{thm span vertex del1} and \thmref{thm span vertex del2} is in fact 
valid for any non-cut vertex in a graph. To prove this for a general case, we need the following result:
\begin{theorem}\label{thm span vertex del general0}
Let $A= \{ p_1, \, \ldots, \, p_{n} \} \subset V(H)$ for a graph $H$, and let $\bar{H}$ be a graph obtained from $H$ by adding $a_i \geq 1$ edges connecting the vertices $p_n$ and $p_i$ for each $i \in \{1, \, \ldots, \, n-1 \}$ with $n \geq 2$.
Then, we have
\begin{equation*}
\begin{split}
t(\bar{H})&= t(H) + \sum_{\substack{p_n \in S \subset A \\ |S| \geq 2}} \Big( \prod_{i \in I_{S-\{ p_n \}} } a_i \Big) t(H_{S}),
\end{split}
\end{equation*}
where $I_{S-\{ p_n \}}$ is the set of indexes of the vertices in $S-\{ p_n \}$, and  $H_S$ is the graph obtained from $H$ by identifying all vertices in $S$.
\end{theorem}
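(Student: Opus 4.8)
The plan is to prove the identity by a direct partition of the spanning trees of $\bar{H}$ according to \emph{which} of the newly added edges they use, and then to recognize each block of the partition as the set of spanning trees of a suitable contraction of $H$. The only external ingredient needed is the deletion--contraction principle, \thmref{thm spantree cont-del}.

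First I would record that for each $i$ the $a_i$ edges joining $p_n$ to $p_i$ are mutually parallel, so any spanning tree $T$ of $\bar{H}$ contains at most one edge from this bundle (two parallel edges would create a cycle). Thus to each spanning tree $T$ I associate the index set $J=J(T)\subseteq\{1,\ldots,n-1\}$ of those $i$ for which $T$ uses exactly one new $p_n$--$p_i$ edge. The new edges lying in $T$ then form a star $F$ centered at $p_n$ with leaf set $\{p_i:i\in J\}$; in particular $F$ is a tree on the vertex set $S:=\{p_n\}\cup\{p_i:i\in J\}$, and there are exactly $\prod_{i\in J}a_i$ ways to choose the specific parallel edges realizing such a star.

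Next, for a fixed $J$ and a fixed choice of the specific edges forming $F$, I would count the spanning trees $T$ of $\bar{H}$ whose set of new edges is exactly $F$. Such a $T$ is precisely a spanning tree of $H\cup F$ that contains all of $F$. Since $F$ is a forest, iterating \thmref{thm spantree cont-del} (the spanning trees of a graph that contain a fixed edge $e$ are counted by the contraction) shows that the number of such trees equals $t\big((H\cup F)/F\big)$. Contracting the star $F$ identifies precisely the vertices of $S$ and discards the contracted edges, so $(H\cup F)/F=H_S$, the $H$-edges internal to $S$ becoming loops that do not affect the count. Hence each such block contributes $t(H_S)$, and summing over the $\prod_{i\in J}a_i$ edge-choices and then over $J$ gives
\begin{equation*}
t(\bar{H})=\sum_{J\subseteq\{1,\ldots,n-1\}}\Big(\prod_{i\in J}a_i\Big)\, t(H_S),\qquad S=\{p_n\}\cup\{p_i:i\in J\}.
\end{equation*}

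Finally, the term $J=\emptyset$ gives $S=\{p_n\}$, an empty product equal to $1$, and $H_{\{p_n\}}=H$, so it contributes $t(H)$; every remaining term has $p_n\in S$ with $|S|\geq 2$. Reindexing the sum by $S$, with $J=I_{S-\{p_n\}}$ and $\prod_{i\in J}a_i=\prod_{i\in I_{S-\{p_n\}}}a_i$, yields exactly the claimed formula. I expect the one delicate point to be the identification $(H\cup F)/F=H_S$ together with the bookkeeping that the factor $\prod_{i\in J}a_i$ comes from the \emph{independent} choice of one parallel edge in each used bundle; once these are pinned down, the rest is a routine application of deletion--contraction.
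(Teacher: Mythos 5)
Your proof is correct, but it takes a genuinely different route from the paper's. The paper argues by induction on $|A|$: it peels off one bundle of parallel edges at a time, applying the deletion--contraction identity of \thmref{thm spantree cont-del} successively to the $a_1$ edges joining $p_1$ and $p_n$ to obtain $t(\bar{H})=t(K)+a_1\,t(K_{p_1p_n})$ for the graph $K$ with that bundle removed, and then invokes the induction hypothesis for the two smaller configurations $K$ and $K_{p_1p_n}$ before recombining the sums (with the cases $|A|=2,3$ worked out explicitly as base cases). You instead partition the spanning trees of $\bar{H}$ in one shot according to the exact set $F$ of new edges used --- each tree takes at most one edge per parallel bundle, so $F$ is a star at $p_n$ --- and identify each block with the spanning trees of the contraction $H_S$ via $(H\cup F)/F=H_S$. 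Your version is more conceptual: it explains the coefficient $\prod_{i\in I_{S-\{p_n\}}}a_i$ as the number of ways to realize the star from the bundles, and it makes transparent why $H$-edges internal to $S$ become harmless loops; the paper's induction is more mechanical but stays entirely within the numerical identity it has already recorded, which is convenient since the same inductive template is reused immediately afterward in the proof of \thmref{thm span vertex del general}. One small bookkeeping point on your side: what you actually invoke is the bijection between spanning trees of a graph containing a fixed non-loop edge $e$ and spanning trees of the contraction by $e$, which is the standard proof of \thmref{thm spantree cont-del} rather than its literal statement (the paper states that identity only for non-bridge edges); since $H$ is connected --- the standing assumption of this section --- every added edge closes a cycle with a path in $H$ and remains a non-loop throughout the iterated contraction of the star, so this is harmless, but it deserves an explicit sentence.
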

\begin{proof}
The proof is given by induction on $k=|A|$.

Let $k=2$. We apply  \thmref{thm spantree cont-del} successively to obtain 
$$t(\bar{H})= t(H)+a_1 t(H_{p_1p_2}),$$ 
as $H_{p_1p_2}=\bar{H}_{p_1p_2}$. This is what we want in this case.

Let $k=3$. We can apply  \thmref{thm spantree cont-del} to each added edge connecting $p_1$ and $p_3$ successively so that we obtain $t(\bar{H})= t(J)+a_1 t(J_{p_1p_3})$, where $J$ is the graph obtained from $\bar{H}$ by deleting all $a_1$ edges that are added to $H$ to connect $p_1$ and $p_3$ as we obtain $\bar{H}$. We can apply the case $n=2$ to both $J$ and $J_{p_1p_3}$ so that we obtain $t(J)=t(H)+a_2 t(J_{p_2 p_3})=t(H)+a_2 t(H_{p_2 p_3})$, and 
$t(J_{p_1p_3})=t(H_{p_1p_3})+a_2 t(H_{p_1 p_2 p_3})$ since identifying the vertices $p_1p_3$ and $p_2$ in $J_{p_1p_3}$ gives the graph $H_{p_1 p_2 p_3}$. Therefore, combining these equations give 
$$t(\bar{H})=t(H)+a_2 t(H_{p_2 p_3})+a_1t(H_{p_1p_3})+a_1 a_2 t(H_{p_1 p_2 p_3}).$$
This proves the given identity for $k=3$.

Let $k$ be an integer with $k\geq 2$. Assume that the given identity holds for any graphs $F$ and $\bar{F}$ 
so that $\bar{F}$ contains the extra set of edges connecting the vertices in $\{ p_1, \, \ldots, \, p_{k} \} \subset V(F)$ 
as described in the theorem. 

Suppose the graphs $H$ and $\bar{H}$ are as in the theorem with $|A|=k+1$, in which case we have $A=\{ p_1, \, \ldots, \, p_{k}, \, p_{k+1} \} \subset V(H)$. Suppose $K$ is the graph obtained from $\bar{H}$ by deleting all $a_1$ edges that are added to $H$ to connect $p_1$ and $p_{k+1}$ as we obtain $\bar{H}$. We can apply  \thmref{thm spantree cont-del} to each added edge connecting $p_1$ and $p_{k+1}$ successively so that we obtain 
$$t(\bar{H})= t(K)+a_1 t(K_{p_1p_{k+1}}).$$
On the other hand, the following identities for $K$ and $K_{p_1p_{k+1}}$ follows from the induction assumption:
$$t(K)= t(H) + \sum_{\substack{p_{k+1} \in S \subset A-\{ p_1 \} \\ |S| \geq 2}} \Big( \prod_{i \in I_{S-\{ p_{k+1} \}} } a_i \Big) t(H_{S}),
$$
$$
t(K_{p_1p_{k+1}})= t(H_{p_1p_{k+1}}) + \sum_{\substack{p_1p_{k+1} \in S \subset A' \\ |S| \geq 2}} \Big( \prod_{i \in I_{S-\{ p_1 p_{k+1} \}} } a_i \Big) t((H_{p_1p_{k+1}})_{S}),
$$
where $A'=\{ p_2, \, p_3, \, \ldots, \, p_{k}, \, p_1p_{k+1} \} \subset V(H_{p_1p_{k+1}})$.
Combining these identities gives
\begin{equation*}
\begin{split}
t(\bar{H})&= t(H) + \sum_{\substack{p_{k+1} \in S \subset A-\{ p_1 \} \\ |S| \geq 2}} \Big( \prod_{i \in I_{S-\{ p_{k+1} \}} } a_i \Big) t(H_{S}) + a_1 t(H_{p_1p_{k+1}}) \\
&\quad + a_1 \sum_{\substack{p_1p_{k+1} \in S \subset A' \\ |S| \geq 2}} \Big( \prod_{i \in I_{S-\{ p_1 p_{k+1} \}} } a_i \Big) t((H_{p_1p_{k+1}})_{S})\\
&= t(H) + \sum_{\substack{p_{k+1} \in S \subset A \\ |S| \geq 2}} \Big( \prod_{i \in I_{S-\{ p_{k+1} \}} } a_i \Big) t(H_{S}).
\end{split}
\end{equation*}
This is what we wanted for the case $k+1$. Hence the result follows by induction principle.
\end{proof}

The following result shows how the number of spanning trees changes if a vertex is deleted. This is one of the main result of this paper.
\begin{theorem}\label{thm span vertex del general}
Let $u \in V(G)$, $N_G(u)= \{ p_1, \, \ldots, \, p_n \} \subset V(G)$ for a graph $G$, and let $u$ be adjacent to the vertex $p_i$ via by $a_i \geq 1$ number of edges for each $i \in \{1, \, \ldots, \, n \}$ with $n \geq 2$. If $u$ is not a cut vertex, then for $G$ and $H=G-u$ we have
\begin{equation*}
\begin{split}
t(G)&= \Big( \sum_{i=1}^n a_i \Big) t(H) + \sum_{\substack{S \subset N_G(u) \\ |S| \geq 2}} \Big( \prod_{i \in I_S } a_i \Big) t(H_{S}),
\end{split}
\end{equation*}
where $I_S$ is the set of indexes of the vertices in $S$, and  $H_S$ is the graph obtained from $H$ by identifying all vertices in $S$. 
\end{theorem}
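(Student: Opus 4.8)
The plan is to induct on $n = |N_G(u)|$, using deletion--contraction on the parallel edges joining $u$ to $p_n$ together with \thmref{thm span vertex del general0}. For the base case $n=2$, the claim is precisely \thmref{thm span vertex del0}, since the only subset $S \subset \{p_1, p_2\}$ with $|S| \geq 2$ is $S = \{p_1, p_2\}$, whose coefficient is $\prod_{i \in I_S} a_i = a_1 a_2$.

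For the inductive step, suppose $n \geq 3$ and that the formula holds for every non-cut vertex with $n-1$ neighbors. Let $G'$ be the graph obtained from $G$ by deleting the $a_n$ edges joining $u$ and $p_n$; in $G'$ the vertex $u$ is adjacent only to $p_1, \ldots, p_{n-1}$, and since $G'-u = G-u = H$ is connected, $u$ is still a non-cut vertex of $G'$. Applying \thmref{thm spantree cont-del} successively to each of the $a_n$ edges between $u$ and $p_n$ --- and noting that contracting any one of them produces the same graph $G_{up_n}$ up to self-loops, which do not affect the spanning-tree count --- I would obtain
\[
t(G) = t(G') + a_n\, t(G_{up_n}).
\]

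Next I would identify $G_{up_n}$, after discarding self-loops, with the graph $\bar H$ of \thmref{thm span vertex del general0} obtained from $H$ by adding $a_i$ edges between $p_n$ and $p_i$ for $i = 1, \ldots, n-1$; that theorem then gives
\[
t(G_{up_n}) = t(H) + \sum_{\substack{p_n \in S \subset N_G(u) \\ |S| \geq 2}} \Big( \prod_{i \in I_S,\, i \neq n} a_i \Big) t(H_S).
\]
Multiplying by $a_n$ turns each coefficient $\prod_{i \in I_S,\, i \neq n} a_i$ into $\prod_{i \in I_S} a_i$, since $n \in I_S$ whenever $p_n \in S$. Meanwhile the induction hypothesis applied to $G'$ (a non-cut vertex $u$ with the $n-1$ neighbors $p_1, \ldots, p_{n-1}$ and the same deleted graph $H$) supplies
\[
t(G') = \Big(\sum_{i=1}^{n-1} a_i\Big) t(H) + \sum_{\substack{S \subset \{p_1,\ldots,p_{n-1}\} \\ |S| \geq 2}} \Big( \prod_{i \in I_S} a_i \Big) t(H_S).
\]

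Adding the two contributions finishes the argument: the two $t(H)$ coefficients combine to $\sum_{i=1}^{n} a_i$, while the subsets $S$ with $p_n \notin S$ coming from $t(G')$ and those with $p_n \in S$ coming from $t(G_{up_n})$ together range over all $S \subset N_G(u)$ with $|S| \geq 2$, each carrying the coefficient $\prod_{i \in I_S} a_i$. The step demanding the most care is the bookkeeping of this merge --- checking that the two index ranges are disjoint and exhaust exactly the subsets of size at least two, and that the factor $a_n$ correctly upgrades the coefficients of the subsets containing $p_n$ --- rather than any delicate combinatorial estimate; the deletion--contraction reduction and the self-loop remark are routine once stated precisely.
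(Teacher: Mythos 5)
Your proposal is correct and takes essentially the same approach as the paper: deletion--contraction applied successively to the $a_n$ parallel edges joining $u$ and $p_n$ to obtain $t(G)=t(G')+a_n\,t(G_{up_n})$, then \thmref{thm span vertex del general0} for the contracted graph and the induction hypothesis for $G'$, with the same bookkeeping merge over subsets containing or omitting $p_n$. The only cosmetic difference is that you anchor the induction at $n=2$ via \thmref{thm span vertex del0}, whereas the paper cites the cases $k=2,3,4$ before running the same inductive step from $k\geq 4$; your single base case suffices because the step works for all $n\geq 3$, and you correctly verify the needed side conditions (that $u$ remains a non-cut vertex of $G'$ and that self-loops do not affect the spanning-tree count).
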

\begin{proof}
The proof is by induction on $k=|N_G(u)|$.
When $k=2, \, 3, \, 4$, the results are given in \thmref{thm span vertex del0}, \thmref{thm span vertex del1} and \thmref{thm span vertex del2}, respectively.

Assume that the given identity holds for any graph $F$ with a vertex $s \in V(F)$ having the set of adjacent vertices $N_F(s)=\{ p_1, \, \ldots, \, p_k \}$, where $k$ is an integer with $k \geq 4$. 
Then we prove the given identity for a graph $G$ with a vertex $u \in V(G)$ having the set of adjacent vertices $N_G(u)=\{ p_1, \, \ldots, \, p_k, \, p_{k+1} \}$.

We have $a_{k+1}$ edges $e_{k+1,j}$ connecting $u$ and $p_{k+1}$ in $G$, where $j \in \{ 1, \, 2, \, \ldots, a_{k+1} \}$. We apply the deletion-contraction identity given in \thmref{thm spantree cont-del} for each of these edges. This gives  
\begin{equation}\label{eqn part del-cont} 
\begin{split}
t(G)&=t(G-e_{k+1,1})+t(\overline{G}_{k+1,1}), \quad \text{by applying \thmref{thm spantree cont-del} to $e_{k+1,1}$} \\
&= t(G-e_{k+1,1})+t(G_{p_{k+1} u}), \quad \text{since $\overline{G}_{k+1,1}=G_{p_{k+1} u}$}\\
&= t(G-e_{k+1,1}-e_{k+1,2})+2t(G_{p_{k+1} u}), \quad \text{repeating the same process for  $e_{k+1,2}$}\\
&\ldots\\
&=t(G')+a_{k+1}t(G_{p_{k+1} u}),
\end{split}
\end{equation}
where $G'=G-e_{k+1,1}-e_{k+1,2}- \, \cdots \, -e_{k+1,a_{k+1}}$ is the graph obtained from $G$ by deleting all edges connecting $u$ and $p_{k+1}$. Thus by the induction assumption for $G'$ and $u$ we have
\begin{equation}\label{eqn part del}
\begin{split}
t(G')&= \Big( \sum_{i=1}^k a_i \Big) t(H) + \sum_{\substack{S \subset N_{G'}(u) \\ |S| \geq 2}} \Big( \prod_{i \in I_S } a_i \Big) t(H_{S}), \quad \text{since $G'-u=G-u=H$}\\
&= \Big( \sum_{i=1}^k a_i \Big) t(H) + \sum_{\substack{S \subset N_{G}(u)-\{p_{k+1} \} \\ |S| \geq 2}} \Big( \prod_{i \in I_S } a_i \Big) t(H_{S}).
\end{split}
\end{equation}
On the other hand, $G_{p_{k+1} u}$ is nothing but $\bar{H}$ which is obtained from $H$ by adding edges between the specified set of vertices among $N_{G}(u)=\{ p_1, \, \ldots, \, p_k, \, p_{k+1} \}$ as in \thmref{thm span vertex del general0}. Thus, applying this theorem gives
\begin{equation}\label{eqn part cont}
\begin{split}
t(G_{p_{k+1} u})= t(H) + \sum_{\substack{p_{k+1} \in S \subset N_{G}(u) \\ |S| \geq 2}} \Big( \prod_{i \in I_{S-\{ p_{k+1} \}} } a_i \Big) t(H_{S}),
\end{split}
\end{equation}
Then using \eqref{eqn part del} and \eqref{eqn part cont} in \eqref{eqn part del-cont} gives
\begin{equation*}
\begin{split}
t(G)&=\Big( \sum_{i=1}^{k+1} a_i \Big) t(H)+\sum_{\substack{S \subset N_{G}(u)-\{p_{k+1} \} \\ |S| \geq 2}} \Big( \prod_{i \in I_S } a_i \Big) t(H_{S})\\
&\quad +a_{k+1} \sum_{\substack{p_{k+1} \in S \subset N_{G}(u) \\ |S| \geq 2}} \Big( \prod_{i \in I_{S-\{ p_{k+1} \}} } a_i \Big) t(H_{S}) \\
&=\Big( \sum_{i=1}^{k+1} a_i \Big) t(H)+\sum_{\substack{S \subset N_G(u) \\ |S| \geq 2}} \Big( \prod_{i \in I_S } a_i \Big) t(H_{S}).
\end{split}
\end{equation*}
This is what we wanted for $k+1$. Thus, the proof of the theorem follows by induction principle.
\end{proof}

It is not difficult to see that \thmref{thm span vertex del general} will have many interesting applications.
We apply this theorem to determine the number of of spanning trees of some families of graphs. First, we recall and make some definitions as a preparation.

Morgan-Voyce Polynomials $B_n(x)$ are defined as follows (\cite{MVo}):
\begin{equation*}\label{eqn MV}
\begin{split}
&B_0(x)=1, \quad B_1(x)=2+x, \\
&B_n(x)=(x+2)B_{n-1}(x)-B_{n-2}(x), \quad \text{for $n \geq 2$}.
\end{split}
\end{equation*}
We have (see \cite{Sw1})
\begin{equation}\label{eqn MV20}
\begin{split}
B_n(x)= \sum_{k=0}^n \binom{n+k+1}{n-k}x^k. 
\end{split}
\end{equation}
It is known that $x B_n(x^2)=F_{2n+2}(x)$ (\cite{Sw2}), 
where $F_n(x)$ is the $n$-th Fibonacci polynomial.

We define the modified fan graph $\overline{Fan}_n$ as follows: Given the path graph $P_n$ with vertex set $\{ p_1, \cdots, p_n \}$, we join a vertex $p$ to $P_n$ by adding $a \geq 1$ multiple edges between $p$ and any vertex of $P_n$. For example,  $\overline{Fan}_2$ with $a=1$ is nothing but the cycle graph $C_3$, and $\overline{Fan}_n$ with $a=1$ is known as the fan graph $Fan_n$ with $n+1$ vertices. The number of spanning trees of $Fan_n$ has studied in literature (\cite{Bog},  \cite{HB}). Here, we give a new proof for it by using \thmref{thm span vertex del general}. Moreover, we generalize this result for $\overline{Fan}_n$:

\begin{theorem}\label{thm fan}
For any integers $a \geq 1$ and $n \geq 2$, we have
\begin{equation*}\label{eqn MV2}
\begin{split}
t(\overline{Fan}_n)=a B_{n-1}(a).
\end{split}
\end{equation*}
In particular, when $a=1$, $t(Fan_n)= F_{2n}$, where $F_n$ is the $n$-th Fibonacci number.
\end{theorem}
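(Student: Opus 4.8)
The plan is to apply \thmref{thm span vertex del general} to the hub vertex $p$ of $\overline{Fan}_n$, to evaluate the resulting vertex-identification terms by hand, and to recognise the total as a Morgan-Voyce polynomial. First I would set $u=p$ and $H=\overline{Fan}_n-p=P_n$. Deleting $p$ leaves the connected path $P_n$, so $p$ is not a cut vertex, its neighbourhood is $N_G(p)=\{p_1,\dots,p_n\}$ with $n\ge 2$, and every multiplicity is $a_i=a$. Hence \thmref{thm span vertex del general} gives
\[
t(\overline{Fan}_n)=na\cdot t(P_n)+\sum_{\substack{S\subseteq\{p_1,\dots,p_n\}\\ |S|\ge 2}} a^{|S|}\,t\big((P_n)_S\big).
\]

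Next I would compute $t\big((P_n)_S\big)$ for $S=\{p_{i_1},\dots,p_{i_k}\}$ with $i_1<\cdots<i_k$. Identifying these vertices collapses each arc $p_{i_l}\,p_{i_l+1}\cdots p_{i_{l+1}}$ into a cycle of length $i_{l+1}-i_l$ through the common identified vertex $\bar p$, while the initial segment $p_1\cdots p_{i_1}$ and the final segment $p_{i_k}\cdots p_n$ become pendant paths at $\bar p$. Since $\bar p$ is a cut vertex separating all these blocks, \thmref{thm span union1} combined with $t(C_m)=m$ and $t(P_m)=1$ from \thmref{thm spantree examples} yields $t\big((P_n)_S\big)=\prod_{l=1}^{k-1}(i_{l+1}-i_l)$ (a length-one arc being a self-loop with factor $1$). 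Folding in the singleton terms, which contribute $a\cdot t(P_n)=a$ each and hence $na$ in total, I obtain
\[
t(\overline{Fan}_n)=\sum_{j=1}^{n} a^{j}\,c_{n,j},
\qquad
c_{n,j}=\sum_{1\le i_1<\cdots<i_j\le n}\ \prod_{l=1}^{j-1}(i_{l+1}-i_l).
\]

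The evaluation of $c_{n,j}$ is the step I expect to be the real obstacle. I would encode a chain $i_1<\cdots<i_j$ by its prefix length $i_1-1\ge 0$, its suffix length $n-i_j\ge 0$, and its gaps $d_l=i_{l+1}-i_l\ge 1$, weighting each gap by the factor $d_l$. Marking total length by $x$ and each chosen vertex by $a$, the generating function factorises into a prefix $\tfrac{1}{1-x}$, a first vertex $ax$, then $j-1$ gap-vertex blocks $a\sum_{d\ge 1}d\,x^{d}=\tfrac{ax}{(1-x)^2}$, and a suffix $\tfrac{1}{1-x}$, so that
\[
\sum_{n\ge 1} t(\overline{Fan}_n)\,x^{n}
=\sum_{j\ge 1}\frac{1}{1-x}\,(ax)\Big(\frac{ax}{(1-x)^2}\Big)^{j-1}\frac{1}{1-x}
=\frac{ax}{(1-x)^2-ax}
=\frac{ax}{1-(a+2)x+x^2}.
\]
Extracting coefficients gives the closed form $c_{n,j}=\binom{n+j-1}{n-j}$, which matches $a B_{n-1}(a)=\sum_{j=1}^{n}\binom{n+j-1}{n-j}a^{j}$ via \eqref{eqn MV20}; equivalently, the denominator $1-(a+2)x+x^2$ shows that $t(\overline{Fan}_n)$ satisfies the Morgan-Voyce recurrence $T_n=(a+2)T_{n-1}-T_{n-2}$, and checking $T_1=a=aB_0(a)$ and $T_2=a^2+2a=aB_1(a)$ closes the induction. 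Either route proves $t(\overline{Fan}_n)=a B_{n-1}(a)$.

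The Fibonacci specialisation is then immediate: putting $a=1$ and evaluating the identity $x B_n(x^2)=F_{2n+2}(x)$ at $x=1$ gives $B_{n-1}(1)=F_{2n}$, so $t(Fan_n)=B_{n-1}(1)=F_{2n}$.
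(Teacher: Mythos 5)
Your proof is correct, and it follows the same skeleton as the paper's: delete the hub $p$ via \thmref{thm span vertex del general}, observe $H=P_n$, and evaluate $t(H_S)$ as the product of consecutive gaps (your block decomposition of $(P_n)_S$ into cycles and pendant paths at the identified vertex, combined with \thmref{thm span union1} and \thmref{thm spantree examples}, is exactly the justification the paper leaves implicit, including the self-loop convention for gaps of length one). Where you genuinely diverge is in evaluating the inner sum $c_{n,j}=\sum_{1\le i_1<\cdots<i_j\le n}\prod_{l}(i_{l+1}-i_l)$: the paper simply asserts the closed form $\binom{n-1+k}{2k-1}$ in \eqref{eqn MV2a}, pointing to an OEIS triangular array, and then matches coefficients against the explicit formula \eqref{eqn MV20} for $B_n$; you instead prove it by a prefix--gap--suffix factorization of the generating function, arriving at $ax/\big(1-(a+2)x+x^2\big)$. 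This buys two things: a self-contained proof of the binomial identity that the paper cites rather than derives, and a denominator that directly exhibits the Morgan-Voyce recurrence $T_n=(a+2)T_{n-1}-T_{n-2}$, so you may conclude either by coefficient extraction (recovering exactly the paper's sum $\sum_{j=1}^n\binom{n+j-1}{n-j}a^j$) or by checking the two base cases $T_1=a$ and $T_2=a(a+2)$ --- the latter route also quietly covers $n=1$, where \thmref{thm span vertex del general} (which requires at least two neighbours) does not literally apply and one should note $t(\overline{Fan}_1)=t(B_a)=a$ directly. Your Fibonacci specialization via $xB_n(x^2)=F_{2n+2}(x)$ at $x=1$ coincides with the paper's.
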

\begin{proof}
Let $H$ be the graph obtained from $\overline{Fan}_n$ by deleting the vertex $p$. Then we have $H=P_n$.
Let $H_{j_1j_2 \cdots j_k}$ be the graph obtained from $H$ by identifying the vertices $p_{j_1}$, $p_{j_2}$, ..., $p_{j_k}$ where the indexes satisfy the inequality $j_1 < j_2< \cdots < j_k $ for some integer $2 \leq k \leq n$. Then we have 
$$
t(H_{j_1j_2 \cdots j_k})=(j_2-j_1)(j_3-j_2) \cdots (j_k-j_{k-1}).
$$
Thus, for $S=\{1, 2, \cdots, n \}$,
\begin{equation}\label{eqn MV2a}
\begin{split}
\sum_{\{ j_1,j_2,\cdots, j_k \} \subset S}t(H_{j_1j_2 \cdots j_k})=\sum_{1 \leq j_1 <j_2< \cdots < j_k \leq n }(j_2-j_1)(j_3-j_2) \cdots (j_k-j_{k-1})=\binom{n-1+k}{2k-1}.
\end{split}
\end{equation}
Alternatively, we have
$$\sum_{\{ j_1,j_2,\cdots, j_k \} \subset S}t(H_{j_1j_2 \cdots j_k})=T(n,k),$$
where $T(n,k)$ are the triangular numbers (see \cite{O1}) given by
\begin{equation*}\label{eqn MVb}
\begin{split}
&T(0,0)=1, \quad \text{and} \quad T(n,k)=0, \quad \text{if  $k > n$ or $k<0$}\\
&T(n,k)=T(n-1,k-1)+2T(n-1,k)-T(n-2,k), \quad \text{if  $n < k$}
\end{split}
\end{equation*}
On the other hand, by \thmref{thm span vertex del general}, we have
\begin{equation*}\label{eqn MV2c}
\begin{split}
t(\overline{Fan}_n)&=n \cdot a \cdot t(H)+\sum_{k=2}^n a^k \sum_{\{ j_1,j_2,\cdots, j_k \} \subset S}
t(H_{j_1j_2 \cdots j_k})\\
&=na+\sum_{k=2}^n a^k \binom{n-1+k}{2k-1}, \quad \text{by \eqref{eqn MV2a}}\\
&=\sum_{k=1}^n a^k \binom{n-1+k}{n-k}\\
&=\sum_{k=0}^{n-1} a^{k+1} \binom{n+k}{n-1-k}\\
&=a B_{n-1}(a), \quad \text{by \eqref{eqn MV20}}.
\end{split}
\end{equation*}
This completes the proof.
\end{proof}
Note that the proof of \thmref{thm fan} gives more insight to the formula of $t(\overline{Fan}_n)$. Namely, it also expresses  $t(\overline{Fan}_n)$ as sum of the triangular numbers, which are the coefficients of the Morgan-Voyce Polynomial $B_n(x)$ in this case. When $a=1$, we have the following triangular numbers $T(n,k)$, $B_n(x)$ and $t(\overline{Fan}_n)$ for small values of $n$:
\begin{equation*}\label{eqn MVd}
\begin{split}
&1, \quad B_0(x)=1, \quad t(\overline{Fan}_1)=1= F_{2}\\
&2, \, \, 1, \quad B_1(x)=2+x, \quad t(\overline{Fan}_2)=3= F_{4}\\
&3, \, \, 4, \, \, 1, \quad B_2(x)=3+4x+x^2, \quad t(\overline{Fan}_3)=8= F_{6}\\
&4, \, \, 10, \, \, 6, \, \, 1, \quad B_3(x)=4+10x+6x^2+x^3, \quad t(\overline{Fan}_4)=21= F_{8}\\ 
&5, \, \, 20, \, \, 21, \, \, 8, \, \, 1, \quad B_4(x)=5+20x+21x^2+8x^3+x^4, \quad t(\overline{Fan}_5)=55=F_{10}.
\end{split}
\end{equation*}

We give another application of \thmref{thm span vertex del general}. Again, we first recall and make some definitions.

We define the polynomials $W_n(x)$ as follows:
\begin{equation*}\label{eqn C}
\begin{split}
&W_0(x)=1, \quad W_1(x)=x+4, \\
&W_{n}(x)=(x+2)W_{n-1}(x)-W_{n-2}(x)+2, \quad \text{for $n \geq 2$}.
\end{split}
\end{equation*}
One can use the strong induction on $n$ and the recursive definitions of $W_n(x)$ to prove
\begin{equation}\label{eqn C0}
\begin{split}
W_n(x)= \sum_{k=0}^n \frac{2n+2}{n+2+k}\binom{n+2+k}{n-k} x^k. 
\end{split}
\end{equation}
We note that $x^2 W_n(x^2)= L_{2n+2}(x)-2$ (see \cite[pg. 16]{K} for the summation formula for $L_{n}(x)$), 
where $L_n(x)$ is the $n$-th Lucas polynomial. Another closely related sequence of polynomials is $C_n(x)$, one of the companion Morgan-Voyce polynomials (see \cite[pg. 165 ]{Ho}). Namely, we have $x W_{n-1}(x)+2=C_n(x)$. 
%

We define the modified wheel graph $\overline{W}_n$ as follows: Given the cycle graph $C_n$ with vertex set $\{ p_1, \cdots, p_n \}$, we join a vertex $p$ to $C_n$ by adding $a \geq 1$ multiple edges between $p$ and any vertex of $C_n$. For example,  $\overline{W}_2$ with $a=2$ is the graph obtained from the cycle graph $C_3$ by replacing each of its edges by double edges, and $\overline{W}_n$ with $a=1$ is known as the wheel graph $W_n$ with $n+1$ vertices. The number of spanning trees of $W_n$ has studied in literature (\cite{My}, \cite{HB} and \cite{S}). Here, we give a new proof for it by using \thmref{thm span vertex del general}. Moreover, we generalize this result for $\overline{W}_n$:

\begin{theorem}\label{thm wheel}
For any integers $a \geq 1$ and $n \geq 2$, we have
\begin{equation*}\label{eqn CC}
\begin{split}
t(\overline{W}_n)=a W_{n-1}(a).
\end{split}
\end{equation*}
In particular, when $a=1$, $t(W_n)= L_{2n}-2$, where $L_n$ is the $n$-th Lucas number.
\end{theorem}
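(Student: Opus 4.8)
The plan is to follow the strategy used for the fan graph in \thmref{thm fan}. Delete the hub vertex $p$ from $\overline{W}_n$; since removing $p$ leaves the cycle $C_n$, the vertex $p$ is not a cut vertex, and it is adjacent to each of the $n$ vertices $p_1, \ldots, p_n$ of $C_n$ via exactly $a$ edges. Thus \thmref{thm span vertex del general} applies with $H = C_n$, $N_G(p) = \{p_1, \ldots, p_n\}$ and all multiplicities $a_i = a$, giving
\[
t(\overline{W}_n) = na\, t(C_n) + \sum_{\substack{S \subset \{p_1, \ldots, p_n\} \\ |S| \geq 2}} a^{|S|}\, t(H_S),
\]
where $H_S$ is the graph obtained from $C_n$ by identifying the vertices of $S$, and $t(C_n) = n$ by \thmref{thm spantree examples}.

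Next I would compute $t(H_S)$ for $S = \{p_{j_1}, \ldots, p_{j_k}\}$ with $j_1 < \cdots < j_k$. Identifying these $k$ vertices cuts $C_n$ into $k$ arcs and glues all their endpoints to a single vertex, so $H_S$ is a bouquet of $k$ cycles meeting only at that vertex, with lengths equal to the arc lengths $d_1 = j_2 - j_1, \ldots, d_{k-1} = j_k - j_{k-1}$ and the wrap-around length $d_k = n - j_k + j_1$. By \thmref{thm span union1} together with $t(C_s) = s$ (a length-one arc becomes a self-loop and contributes the factor $d_i = 1$), this yields $t(H_S) = \prod_{i=1}^k d_i$.

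The main step, and the expected obstacle, is the evaluation of $\sigma_k := \sum_{|S| = k} \prod_{i=1}^k d_i$. I would read $\prod_i d_i$ as the number of ways to select one edge inside each arc, so that $\sigma_k$ counts the configurations consisting of $k$ chosen vertices and $k$ chosen edges that alternate around $C_n$. Viewing $C_n$ as a $2n$-cycle of alternating vertex- and edge-slots, such a configuration leaves $2(n-k)$ unchosen slots split into $2k$ gaps of even size; counting (configuration, marked chosen vertex) pairs by the gap composition and dividing by $k$ gives
\[
\sigma_k = \frac{n}{k}\binom{n+k-1}{2k-1} = \frac{2n}{n+k}\binom{n+k}{n-k}.
\]
One checks that this formula also holds for $k = 1$, where it equals $n^2$, matching the leading term $na\, t(C_n) = n^2 a$; hence that term is precisely the $k=1$ contribution.

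Collecting terms then gives $t(\overline{W}_n) = \sum_{k=1}^n \frac{2n}{n+k}\binom{n+k}{n-k}\, a^k$. Reindexing \eqref{eqn C0} for $W_{n-1}$ shows $a\, W_{n-1}(a) = \sum_{j=1}^n \frac{2n}{n+j}\binom{n+j}{n-j}\, a^j$, so the two sums agree and $t(\overline{W}_n) = a\, W_{n-1}(a)$. For the special case $a = 1$, the stated identity $x^2 W_{n-1}(x^2) = L_{2n}(x) - 2$ evaluated at $x = 1$ gives $W_{n-1}(1) = L_{2n} - 2$, whence $t(W_n) = W_{n-1}(1) = L_{2n} - 2$. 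The only genuinely delicate point is the combinatorial evaluation of $\sigma_k$; the remaining steps are bookkeeping against the polynomial identity \eqref{eqn C0} and the Lucas relation, as confirmed by the case $n=3$ where $\overline{W}_3$ with $a=1$ is $K_4$ and the formula returns $9+6+1 = 16 = L_6 - 2$.
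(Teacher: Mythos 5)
Your proposal is correct and takes essentially the same route as the paper: delete the hub vertex, apply \thmref{thm span vertex del general} with $H=C_n$, recognize $t(H_S)$ as the product of the arc lengths (the paper writes this as $(j_2-j_1)\cdots(j_k-j_{k-1})(j_1+n-j_k)$), and match the resulting sum $\sum_{k=1}^n \frac{2n}{n+k}\binom{n+k}{n-k}a^k$ against \eqref{eqn C0}, with the Lucas identity handling $a=1$. The one place you go beyond the paper is the cyclic gap-composition count establishing $\sum_{|S|=k}\prod_i d_i=\frac{n}{k}\binom{n+k-1}{2k-1}=\frac{2n}{n+k}\binom{n+k}{n-k}$, a step the paper asserts in \eqref{eqn Ca} with only a pointer to the OEIS triangular numbers, so your argument is if anything more self-contained.
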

\begin{proof}
Let $H$ be the graph obtained from $\overline{W}_n$ by deleting the vertex $p$. Then we have $H=C_n$.
Let $H_{j_1j_2 \cdots j_k}$ be the graph obtained from $H$ by identifying the vertices $p_{j_1}$, $p_{j_2}$, ..., $p_{j_k}$ where the indexes satisfy the inequality $j_1 < j_2< \cdots < j_k $ for some integer $2 \leq k \leq n$. Then we have 
$$
t(H_{j_1j_2 \cdots j_k})=(j_2-j_1)(j_3-j_2) \cdots (j_k-j_{k-1}) (j_1+n-j_{k}).
$$
Thus, for $S=\{1, 2, \cdots, n \}$, we have the following equality for each $2 \leq k \leq n$:
\begin{equation}\label{eqn Ca}
\begin{split}
\sum_{\{ j_1,j_2,\cdots, j_k \} \subset S}t(H_{j_1j_2 \cdots j_k})&=\sum_{1 \leq j_1 <j_2< \cdots < j_k \leq n }(j_2-j_1)(j_3-j_2) \cdots (j_k-j_{k-1}) (j_1+n-j_{k}) \\
&= \frac{2n}{n+k}\binom{n+k}{n-k}.
\end{split}
\end{equation}
Alternatively, for each $2 \leq k \leq n$ we have 
$$\sum_{\{ j_1,j_2,\cdots, j_k \} \subset S}t(H_{j_1j_2 \cdots j_k})=T(n,n-k),$$
where $T(n,k)$ are the triangular numbers (see \cite{O2}) such that the row $n$ has $n+1$ terms and $n$-th row is given by 
\begin{equation*}\label{eqn Cb}
\begin{split}
&T(n,0)=1, \quad \text{and} \quad T(n,n)=2, \quad \text{for $n >0$}\\
&T(n,k)=T(n-1,k-1)+\sum_{i=0}^k T(n-1-i,n-i), \quad \text{if $0 < k < n$}
\end{split}
\end{equation*}
On the other hand, by \thmref{thm span vertex del general}, we have
\begin{equation*}\label{eqn Cc}
\begin{split}
t(\overline{W}_n)&=n \cdot a \cdot t(H)+\sum_{k=2}^n a^k \sum_{\{ j_1,j_2,\cdots, j_k \} \subset S}
t(H_{j_1j_2 \cdots j_k})\\
&=n^2a+\sum_{k=2}^{n} a^{k} \frac{2n}{n+k} \binom{n+k}{n-k}, \quad \text{by \eqref{eqn Ca}}\\
&=\sum_{k=1}^n a^k \frac{2n}{n+k} \binom{n+k}{n-k}\\
&=\sum_{k=0}^{n-1} a^{k+1} \frac{2n}{n+1+k} \binom{n+1+k}{n-1-k}\\
&=a W_{n-1}(a), \quad \text{by \eqref{eqn C0}}.
\end{split}
\end{equation*}
This completes the proof.
\end{proof}

Again, the proof of \thmref{thm wheel} gives more insight to the formula of $t(\overline{W}_n)$. Namely, it also expresses  $t(\overline{W}_n)$ as sum of the triangular numbers, which are the coefficients of the polynomials $W_n(x)$ in this case. When $a=1$, we have the following triangular numbers $T(n,k)$, $W_n(x)$ and $t(\overline{W}_n)$ for small values of $n$:
\begin{equation*}\label{eqn Cd}
\begin{split}
&1, \quad W_0(x)=1, \quad t(\overline{W}_1)=1= L_{2}-2\\
&4, \, \, 1, \quad W_1(x)=4+x, \quad t(\overline{W}_2)=5= L_{4}-2\\
&9, \, \, 6, \, \, 1, \quad W_2(x)=9+6x+x^2, \quad t(\overline{W}_3)=16= L_{6}-2\\
&16, \, \, 20, \, \, 8, \, \, 1, \quad W_3(x)=16+20x+8x^2+x^3, \quad t(\overline{W}_4)=45= L_{8}-2\\ 
&25, \, \, 50, \, \, 35, \, \, 10, \, \, 1, \quad W_4(x)=25+50x+35x^2+10x^3+x^4, \quad t(\overline{W}_5)=121=L_{10}-2.
\end{split}
\end{equation*}

Some more applications of  \thmref{thm span vertex del general} will be given in a subsequent paper.

\textbf{Declaration of competing interest:} The author declares that he has no known competing financial interest or personal relationship that could have appeared to influence the work reported in this paper.

\end{document}